\title[Behavior of canonical divisors]
{Behavior of canonical divisors under purely inseparable base changes} 
\author{Hiromu Tanaka} 
\subjclass[2010]{14E30, 14M22.}
\keywords{uniruled, Mori fiber space, cone theorem}
\address{Department of Mathematics, Imperial College, London, 180 Queen's Gate, 
London SW7 2AZ, UK} 
\email{h.tanaka@imperial.ac.uk}
\newcommand{\reg}[0]{{\operatorname{reg}}}
\newcommand{\red}[0]{{\operatorname{red}}}
\newcommand{\Spec}[0]{{\operatorname{Spec}}}
\newcommand{\Supp}[0]{{\operatorname{Supp}}}
\newcommand{\Pic}[0]{{\operatorname{Pic}}}
\newcommand{\Ex}[0]{{\operatorname{Ex}}}
\newtheorem{thm}{Theorem}[section]
\newtheorem{lem}[thm]{Lemma}
\newtheorem{cor}[thm]{Corollary}
\newtheorem{prop}[thm]{Proposition}
\theoremstyle{definition}
\newtheorem{ex}[thm]{Example}
\newtheorem{dfn}[thm]{Definition}
\newtheorem{rem}[thm]{Remark}
\newtheorem*{ack}{Acknowledgments}
\newcommand{\MO}{\mathcal{O}}
\newcommand{\R}{\mathbb{R}}
\newcommand{\Q}{\mathbb{Q}}
\newcommand{\Z}{\mathbb{Z}}
\begin{document}

\maketitle

\begin{abstract}
Let $k$ be an imperfect field. 
Let $X$ be a regular variety over $k$ and 
set $Y$ to be the normalization of $(X\otimes_k k^{1/p^{\infty}})_{{\rm red}}$. 
In this paper, we show that $K_Y+C=f^*K_X$ 
for some effective divisor $C$ on $Y$. 
We obtain the following three applications. 
First, we show that a $K_X$-trivial fiber space with non-normal fibers is uniruled. 
Second, we prove that general fibers of Mori fiber spaces are rationally chain connected. 
Third, we obtain a weakening of the cone theorem for surfaces and threefolds defined over an imperfect field. 
\end{abstract}

\tableofcontents

\setcounter{section}{0}

\section{Introduction}

Let $k$ be a field of characteristic $p>0$. 
Let $X$ be a proper regular variety over $k$. 
The main interest of this paper is to study a relation between 
the canonical divisor $K_X$ and purely inseparable base changes. 
More precisely, we would like to reduce some problems to the ones 
for varieties over algebraically closed fields or perfect fields. 
However, the base change $X\otimes_k k^{p^{-\infty}}$, 
where $k^{p^{-\infty}}:=\bigcup_{e \geq 0} k^{p^{-e}}$, is not normal in general (even not reduced). 
By taking the normalization $Y$ of the reduced structure $(X\otimes_k k^{p^{-\infty}})_{\red}$, 
it is natural to consider the relation between $K_X$ and $K_Y$. 
In this paper, we show the following theorem. 

\begin{thm}[Main theorem, Theorem~\ref{bc-main}]\label{0bc-main}
Let $k$ be a field of characteristic $p>0$. 
Let $X$ be a regular variety over $k$ such that $k$ is algebraically closed in $K(X)$. 
Set $Y$ to be the normalization of $(X\otimes_k k^{p^{-\infty}})_{\red}$ and 
let $Y_{\reg}$ be the regular locus of $Y$. 
Let $f:Y \to X$ be the natural morphism. 
Then, there exists an effective $\Z$-divisor $C$ on $Y$ such that 
$$(\omega_{Y/k^{p^{-\infty}}}\otimes_{\MO_Y} \MO_Y(C))|_{Y_{\reg}} \simeq f^*\omega_{X/k}|_{Y_{\reg}}.$$
We have $C=0$ if and only if $X$ is geometrically normal over $k$. 
\end{thm}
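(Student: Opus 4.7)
The plan is to reduce the statement to a codimension-$1$ check on $Y_\reg$ at a single degree-$p$ purely inseparable base change, where a direct computation with Frobenius yields the effective bound.

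First, I would reduce to a local valuation computation. The sheaves $\omega_{Y/k^{p^{-\infty}}}|_{Y_\reg}$ and $f^*\omega_{X/k}|_{Y_\reg}$ are both invertible, so they differ by $\MO_Y(D)|_{Y_\reg}$ for a unique Weil divisor $D$ on $Y$, and effectivity of $D$ is a non-negativity check at each codimension-$1$ point of $Y$, all of which lie in $Y_\reg$ since $Y$ is normal. Next, because $X$ is of finite type, there is some $e \geq 0$ for which $Y_e := \mathrm{Norm}((X \otimes_k k^{1/p^e})_\red)$ is geometrically normal over $k^{1/p^e}$; then $Y = Y_e \otimes_{k^{1/p^e}} k^{p^{-\infty}}$ with compatible canonical sheaves, so it suffices to work with $f_e : Y_e \to X$. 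Telescoping along the tower $Y_e \to Y_{e-1} \to \cdots \to Y_0 = X$ of $e$ single degree-$p$ base changes, and noting that the local effective contributions add at each codimension-$1$ point, I reduce to a single step $g : Y' \to X'$ with $Y' = \mathrm{Norm}((X' \otimes_{k'} k'')_\red)$ and $[k'':k']=p$.

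The main step is the DVR computation at a codimension-$1$ point $\eta$ of $Y'_\reg$ with image $\xi = g(\eta) \in X'$, comparing local generators of the canonical modules of $R = \MO_{X',\xi}$ and $S = \MO_{Y',\eta}$ as fractional ideals in a common module of top differentials, related by the purely inseparable extension $K(X') \hookrightarrow K(Y')$. The key tool is the factorization of absolute Frobenius $F_{X'}$ as $X' \xrightarrow{s} X' \otimes_{k'} k'' \to X'$, where $s$ is the closed immersion induced by the ring map $r \otimes a \mapsto r a^p$. Since $s$ factors through the normalization $Y'$, this yields an injection $\omega_{Y'/k''} \hookrightarrow g^*\omega_{X'/k'}$ whose quotient is supported on the desired effective divisor.

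I expect the main obstacle to be this DVR calculation: one must carefully track how the ramification of the purely inseparable extension interacts with the possibly non-smooth, but regular, geometry of $X'$ over the imperfect field $k'$, adapting standard differential form computations to a setting where $\Omega^1_{k''/k'} \neq 0$. The biconditional $C = 0 \iff X$ geometrically normal then follows from the construction: if $X$ is geometrically normal then the $(-)_\red$ and normalization operations in the definition of $Y$ are trivial, making $f$ a flat base change and forcing $C = 0$; conversely, if $C = 0$ at every codimension-$1$ point of $Y_\reg$, the local Frobenius comparison in the previous step is an equality everywhere, which rules out the nilpotents that would have been introduced by a failure of geometric normality.
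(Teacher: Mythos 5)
There are several genuine gaps, and together they break the central computation. First, the telescoping is set up incorrectly: the tower $Y_e \to Y_{e-1} \to \cdots \to Y_0 = X$ is not a tower of degree-$p$ base changes, since $[k^{1/p}:k]$ is typically $>p$ (even infinite). More seriously, once you refine to genuine degree-$p$ subextensions $k' \subset k''$ of $k^{p^{-\infty}}$, the intermediate base change $X' \otimes_{k'} k''$ can be \emph{non-reduced}, and then the conductor comparison for normalizations of Gorenstein \emph{integral} schemes (Proposition~\ref{K-normalization}) no longer applies. This is precisely the difficulty the paper is engineered to avoid: Lemma~\ref{p-change-field} guarantees that if $k'$ is purely inseparably closed in $K(X')$, then $K(X')\otimes_{k'}k''$ is a field and hence $X'\otimes_{k'}k''$ is integral; to keep this hypothesis available at the next stage one must \emph{enlarge} the base field to the purely inseparable closure $k_1$ of $k''$ in $K((X'\otimes_{k'}k'')^N)$, which is harmless for dualizing sheaves by Lemma~\ref{base-invariance}. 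In Example~\ref{double-conic}, after the degree-$2$ step $k\subset F(s^{1/2},t)=:k'$ the normalization picks up $t^{1/2}$ as a new constant, so that $X_1\otimes_{k'} k'(t^{1/2})$ is non-reduced; your scheme has no mechanism for replacing $k'$ by $k_1=k'(t^{1/2})$ and would stall exactly here.

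Second, the DVR step via Frobenius is not correct as stated. The ring map underlying your $s$ that factors absolute Frobenius is $r\otimes b\mapsto r^p b^p$ (the map $r\otimes b\mapsto r b^p$ is not $k'$-bilinear, hence not well defined), and it is \emph{not} surjective: its image is the $\Fp$-subalgebra generated by $\MO_{X'}^{\,p}$ and $(k'')^p$, which misses most of $k'$. So $s$ is a finite morphism, not a closed immersion. Even with the correct factorization $X'\xrightarrow{\sigma} Y' \xrightarrow{g} X'$ with $g\circ\sigma$ the absolute Frobenius, you never explain how this produces an $\MO_{Y'}$-linear injection $\omega_{Y'/k''}\hookrightarrow g^*\omega_{X'/k'}$ with effective, divisorial cokernel. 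Grothendieck duality along $\sigma$ yields $\sigma^!\omega_{Y'}\simeq\omega_{X'}$, which is a relation in the opposite direction and does not by itself give the desired inclusion; recall also that the trace of a purely inseparable extension vanishes, so the usual trace-pairing route is unavailable. The paper sidesteps this entirely by arranging that the source of the normalization is an integral Gorenstein scheme and invoking Reid's conductor formula. Finally, the converse direction of the biconditional ($C=0\Rightarrow X$ geometrically normal) is asserted by appeal to ``the local Frobenius comparison being an equality,'' which is not an argument; the paper proves it by the inductive computation of $H^0(X_i,\MO_{X_i})$ in Step~3 of Theorem~\ref{bc-main}.
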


In the following, we consider some applications of Theorem~\ref{0bc-main}. 
We can apply this theorem to $K_X$-trivial fibrations with non-normal fibers as follows.

\begin{thm}[Theorem~\ref{uniruled-main}]\label{0uniruled}
Let $k$ be an algebraically closed field of characteristic $p>0$. 
Let $\pi:X \to S$ be a projective $k$-morphism of normal $k$-varieties 
such that $\pi_*\MO_X=\MO_S$. 
Assume that $K_X$ is $\Q$-Cartier and $\pi$-numerically trivial. 
If the generic fiber of $\pi$ is not geometrically normal, then $X$ is uniruled. 
\end{thm}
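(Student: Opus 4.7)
The plan is to pass to the generic fiber of $\pi$, invoke Theorem~\ref{0bc-main} over the imperfect field $K(S)$ to produce an effective nonzero anti-canonical $\Q$-Weil divisor class after a purely inseparable base change, and then deduce uniruledness by a Miyaoka--Mori numerical criterion followed by a standard spread-out argument.

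First I pass to the generic fiber. Let $\eta$ be the generic point of $S$ and set $K:=K(S)$. The generic fiber $X_\eta$ is normal and projective over $K$, and $K$ is algebraically closed in $K(X_\eta)=K(X)$ because $\pi_*\MO_X=\MO_S$; the divisor $K_{X_\eta}$ is $\Q$-Cartier and numerically trivial; and $X_\eta$ is not geometrically normal over $K$. After restricting to the regular locus of $X_\eta$, whose complement has codimension $\geq 2$ by normality, I apply Theorem~\ref{0bc-main} to obtain a normal projective variety $Y$ over the perfect field $K^{p^{-\infty}}$ with a natural morphism $f:Y\to X_\eta$ and an effective $\Z$-divisor $C$ on $Y$ satisfying
\[
K_Y + C \sim_\Q f^*K_{X_\eta}
\]
as $\Q$-Weil divisors on $Y$ (the isomorphism on regular loci extends by normality). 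The last clause of Theorem~\ref{0bc-main} forces $C\neq 0$, and combining with $K_{X_\eta}\equiv 0$ gives $K_Y\equiv -C$ with $C\geq 0$ and $C\neq 0$.

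Next I establish uniruledness of $Y$ after passing to the algebraic closure. Set $n:=\dim X-\dim S=\dim Y$ and fix an ample divisor $H$ on $Y$. Then
\[
K_Y\cdot H^{n-1} \;=\; -C\cdot H^{n-1} \;<\; 0.
\]
Since the extension $K^{p^{-\infty}}\subset\overline K$ is separable algebraic, the base change $Y_{\overline K}:=Y\otimes_{K^{p^{-\infty}}}\overline K$ is normal projective over $\overline K$ and inherits the strict negativity $K_{Y_{\overline K}}\cdot H_{\overline K}^{n-1}<0$. A general complete intersection of $n-1$ members of $|mH_{\overline K}|$ for $m\gg 0$ produces a smooth curve lying in the smooth locus of $Y_{\overline K}$ and moving in a covering family with negative intersection against $K_{Y_{\overline K}}$, so the Miyaoka--Mori bend-and-break theorem implies that $Y_{\overline K}$ is uniruled.

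Finally I descend to $X$. Since normalization commutes with the separable base change $K^{p^{-\infty}}\subset\overline K$ and $(X_\eta\otimes K^{p^{-\infty}})_{\red}\otimes\overline K=(X_{\overline\eta})_{\red}$, the induced morphism $Y_{\overline K}\to X_{\overline\eta}$ is finite and surjects onto $(X_{\overline\eta})_{\red}$, so $X_{\overline\eta}$ is uniruled. A standard spread-out argument for rational curves in a projective family then yields the uniruledness of $X$. The main difficulty I anticipate is applying the Miyaoka--Mori criterion to the possibly singular variety $Y_{\overline K}$: the criterion is classically stated for smooth varieties, and its extension to the normal case requires observing that general complete intersection curves avoid the codimension-$\geq 2$ singular locus, which is straightforward here.
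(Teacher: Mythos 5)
Your proposal is correct and takes essentially the same route as the paper: pass to the generic fiber over $K(S)$, apply Theorem~\ref{bc-main} to get $K_Y+C=f^*K_{X_\eta}$ with $C\neq 0$, deduce uniruledness of $Y$ by bend and break after passing to the algebraic closure (the paper's Lemma~\ref{BB}), transfer uniruledness back (the paper's Lemma~\ref{bc-uniruled}), and spread out over an open subset of $S$. The only difference is organizational: the paper packages the bend-and-break and base-change steps into Theorem~\ref{imperfect-uniruled}, stated for the more general decomposition $-K_X=N+E$ with $N$ $\pi$-nef, whereas you carry out the same steps directly in the numerically trivial case.
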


\begin{rem}
If $X$ is a smooth surface and $S$ is a curve in Theorem~\ref{0uniruled}, 
then $X$ is a quasi-elliptic fibration (cf. \cite[Section~7]{Badescu}), 
that is, a general fiber is a cuspidal cubic curve. 
\end{rem}

In the minimal model theory, a Mori fiber space is one of the central objects. 
In the following result, we see that its general fibers are rationally chain connected.

\begin{thm}[Theorem~\ref{MFS-RCC}]\label{0MFS-RCC}
Let $k$ be a field. 
Let $\pi:X \to S$ be a projective surjective $k$-morphism of 
normal $k$-varieties such that $\pi_*\MO_X=\MO_S$. 
Assume the following conditions. 
\begin{enumerate}
\item{$X$ is $\Q$-factorial.}
\item{$-K_X$ is $\pi$-ample.}
\item{$\rho(X/S)=1$.}
\end{enumerate}
Then general fibers of $\pi$ are rationally chain connected. 
\end{thm}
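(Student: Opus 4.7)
The strategy is to reduce the problem to showing rational chain connectedness of the geometric generic fibre of $\pi$; this then propagates to general fibres by standard specialisation arguments for proper flat families. Set $K:=K(S)$ and let $F:=X_\eta$ be the generic fibre. Then $F$ is a normal projective $K$-variety that is $\Q$-factorial, has $-K_F$ ample, and in which $K$ is algebraically closed in $K(F)$ (the latter coming from $\pi_*\MO_X=\MO_S$). In characteristic zero, $F\otimes_K\overline{K}$ is a $\Q$-factorial klt Fano variety of Picard number one, and hence rationally chain connected by Koll\'ar--Miyaoka--Mori; we therefore assume $\mathrm{char}(k)=p>0$.

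Put $K':=K^{p^{-\infty}}$ and let $f:Y_0\to F$ be the natural finite morphism from the normalization $Y_0$ of $(F\otimes_K K')_{\red}$. Applying Theorem~\ref{0bc-main} on the regular locus of $F$ and then extending across by normality of $Y_0$ yields an effective $\Z$-divisor $C$ on $Y_0$ with $K_{Y_0}+C\sim f^*K_F$. Since $f$ is finite and $-K_F$ is ample, $-f^*K_F$ is ample, so $-K_{Y_0}=-f^*K_F+C$ is ample-plus-effective and in particular big. Because $K'$ is perfect, base-changing further to $\overline{K}=\overline{K'}$ preserves normality and the divisor formula; writing $Y:=Y_0\otimes_{K'}\overline{K}$, one gets an induced finite surjective morphism $g:Y\to F_{\bar\eta}$ over $\overline{K}$. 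Since rational chain connectedness descends along surjections, it suffices to prove $Y$ is rationally chain connected.

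Over the algebraically closed field $\overline{K}$, bigness of $-K_Y$ together with Miyaoka--Mori bend-and-break gives that $Y$ is uniruled. Consider the maximal rationally chain connected fibration $\varphi:Y\dashrightarrow Z$; suppose for contradiction that $\dim Z>0$. After resolving $\varphi$, pulling back an ample class from $Z$ produces a nonzero effective $\Q$-divisor $D$ on $Y$ with $D\cdot\gamma=0$ for every curve $\gamma$ contained in a general fibre of $\varphi$. On the other hand, $\rho(X/S)=1$ descends through the finite purely inseparable morphism $g$ to $\rho(Y)=1$, so numerically $D\equiv a\cdot(-K_Y)$ for some $a>0$. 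Choosing $\gamma$ to be a rational curve sweeping out a general fibre of $\varphi$, one computes $(-K_Y)\cdot\gamma=(-f^*K_F)\cdot\gamma+C\cdot\gamma>0$ since $-f^*K_F$ is ample and $C$ is effective, so $D\cdot\gamma>0$, contradicting $D\cdot\gamma=0$. Hence $Z$ is a point and $Y$, and thus $F_{\bar\eta}$, is rationally chain connected.

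The principal obstacle is propagating the Picard-rank condition $\rho(X/S)=1$ through the two base changes $K\to K'\to\overline{K}$ to obtain $\rho(Y)=1$; this rests on the invariance of $\Pic_{\Q}$ under finite purely inseparable morphisms of normal varieties. A secondary technical point is that Theorem~\ref{0bc-main} is stated for regular varieties whereas $F$ is only normal, forcing one to apply it on the regular locus of $F$ first and then extend the divisor relation to all of $Y_0$ using normality.
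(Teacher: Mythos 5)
There is a genuine gap at the decisive step of your contradiction argument: you pass from $Y_0$ over the perfection $K'=K^{p^{-\infty}}$ to $Y:=Y_0\otimes_{K'}\overline{K}$ and then assert that $\rho(X/S)=1$ ``descends'' to $\rho(Y)=1$ because Picard ranks are invariant under finite purely inseparable morphisms. But the extension $\overline{K}/K'$ is \emph{separable} (the base field $K'$ is already perfect), and the invariance of the Picard number (Proposition~\ref{p-insep-picard}, via Lemma~\ref{intersection-bc} and Keel's lemma) holds only for purely inseparable base changes and universal homeomorphisms. Under a separable algebraic extension the Picard number can jump: already a $\Q$-factorial Fano surface with $\rho=1$ over a non-closed field may become $\mathbb P^1\times\mathbb P^1$ with $\rho=2$ over $\overline{K}$. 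In that situation your key inference $D\equiv a\,(-K_Y)$ with $a>0$ has no justification, and the whole MRCC-fibration contradiction collapses. (A secondary issue of the same flavour: $\Q$-factoriality of $Y_0$, which you get from the universal homeomorphism $Y_0\to F$, is likewise not known to survive the base change to $\overline{K}$, so even the $\Q$-Cartierness of your pushed-forward divisor $D$ is unclear. Also the step from $\rho(X/S)=1$ to $\rho(X_{K(S)})=1$ is not purely formal; it needs the spreading-out argument of Lemma~\ref{generic-Qfac}(2).)

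The paper's proof is arranged precisely to avoid this trap. After reducing to the generic fibre (Lemma~\ref{generic-Qfac}) and then, via Theorem~\ref{bc-main} together with Proposition~\ref{p-insep-picard} and Lemma~\ref{p-insep-qfac}, to a perfect but still possibly non-closed base field where $\rho=1$, $\Q$-factoriality and ampleness of $-K$ are preserved, it does \emph{not} try to carry $\rho=1$ to $\overline{K}$. Instead it uses the base change $\overline{K}$ only to conclude uniruledness, and then invokes Proposition~\ref{MRCC-bc} (the appendix, a Galois-descent statement for MRCC fibrations) to transfer the statement ``the MRCC quotient has positive dimension'' back down to the original field; the contradiction with $\rho=1$ is then derived over that field, using $\Q$-factoriality to make the closure of $\pi^{-1}(D_V)$ (for a Cartier divisor $D_V$ on $V$ missing a chosen point) a $\Q$-Cartier divisor meeting a fibre curve in degree $0$. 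To repair your argument you would either need to prove an analogue of Proposition~\ref{MRCC-bc} or otherwise keep the numerical argument over the field where $\rho=1$ actually holds; as written, the proof does not go through.
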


\begin{rem}
If $k$ is an algebraically closed field of characteristic zero and $X$ is log canonical, 
then Theorem~\ref{0MFS-RCC} follows from \cite[Corollary~1.5]{HM}. 
\end{rem}

We obtain the following corollary. 

\begin{cor}[Corollary~\ref{Fano-torsion}]
Let $k$ be a field. 
Let $X$ be a projective normal variety over $k$. 
Assume the following conditions. 
\begin{enumerate}
\item{$X$ is $\Q$-factorial.}
\item{$-K_X$ is ample.}
\item{$\rho(X)=1$.}
\end{enumerate}
If $D$ is a numerically trivial Cartier divisor on $X$, 
then there exists $n \in \mathbb Z_{>0}$ such that $\MO_X(nD) \simeq \MO_X$. 
\end{cor}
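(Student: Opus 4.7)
The strategy is to apply Theorem~\ref{0MFS-RCC} to the structure morphism of $X$ to obtain rational chain connectedness, and then convert this into torsion of the numerically trivial line bundle.

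\emph{Step 1 (Rational chain connectedness).} Set $k' := H^0(X, \MO_X)$, which is a finite field extension of $k$ since $X$ is projective. Regarding $X$ as a $k'$-variety, the structure morphism $\pi \colon X \to \Spec k'$ is projective and satisfies $\pi_*\MO_X = \MO_{\Spec k'}$ by construction. The hypotheses that $X$ is $\Q$-factorial, that $-K_X$ is ample (hence $\pi$-ample), and that $\rho(X/\Spec k') = \rho(X) = 1$ all carry over unchanged. Applying Theorem~\ref{0MFS-RCC} to $\pi$, the general fibre of $\pi$ is rationally chain connected; since $\pi$ has the unique fibre $X$, this says $X$ itself is rationally chain connected.

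\emph{Step 2 (From RCC to torsion).} For a projective normal variety $X$, the N\'eron--Severi theorem gives that $\Pic^\tau(X)/\Pic^0(X)$ is finite, where $\Pic^\tau$ is the group of numerically trivial classes and $\Pic^0_{X/k'}$ is the identity component of the Picard scheme. Thus it suffices to show $\Pic^0_{X/k'}(\overline{k'}) = 0$. The classical argument: rationally chain connected varieties admit no nonconstant morphisms to abelian varieties (their Albanese vanishes), which kills the abelian quotient of $\Pic^0$, while normality and projectivity control the affine part. Consequently $\Pic^\tau(X_{\overline{k'}})$ is finite, every numerically trivial class on $X_{\overline{k'}}$ is torsion, and by a straightforward descent to $k'$ we conclude that $nD$ is principal on $X$ for some $n > 0$.

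The main obstacle is Step 2 in positive characteristic over a possibly imperfect field: one must ensure the vanishing $\Pic^0 = 0$ is robust against the potential non-reducedness of the Picard scheme, and that the torsion statement descends faithfully from $\overline{k'}$ back to $k'$ (and hence to $k$). The rest of the argument is a direct invocation of Theorem~\ref{0MFS-RCC}.
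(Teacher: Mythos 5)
Your Step 1 is fine (applying Theorem~\ref{0MFS-RCC} with base $\Spec k'$, $k'=H^0(X,\MO_X)$, is just the Fano case of the theorem and does give that $X$ is rationally chain connected over $k'$), and your Step 2 begins along the same lines as the paper: finiteness of $\Pic^\tau/\Pic^0$ plus triviality of the Albanese of an RCC variety. But the sentence ``while normality and projectivity control the affine part'' is exactly where the argument breaks, and you have mislocated the obstacle: it is not the possible non-reducedness of the Picard scheme (torsion is tested on $\overline{k'}$-points, i.e.\ on $\Pic^0_{\mathrm{red}}$), it is that the classical statement ``$\Pic^0_{\mathrm{red}}$ of a normal projective variety is an abelian variety dual to the Albanese'' is a \emph{geometric} statement: it needs $X\otimes_{k'}\overline{k'}$ to be normal. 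In the setting of this paper $X$ need not be geometrically normal, or even geometrically reduced, over $k'$ -- that is the whole point of the main theorem -- and for a non-normal projective scheme $\Pic^0$ can have a nontrivial affine part on which numerically trivial classes are not torsion (e.g.\ degree-zero line bundles on a nodal cubic). RCC-ness of $X$ over $k'$ does not by itself rule this out, so as written the key vanishing $\Pic^0_{X/k'}(\overline{k'})=0$ (up to torsion) is unproved.

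The paper closes precisely this gap with its main theorem. In the proof of Theorem~\ref{Fano-RCC} (whose reduction step the corollary reuses), one replaces $X$ by the normalization $Y$ of $(X\otimes_k k^{p^{-\infty}})_{\mathrm{red}}$; Theorem~\ref{bc-main} gives $K_Y+C=f^*K_X$ with $C\geq 0$, and since $\rho(Y)=1$ (Proposition~\ref{p-insep-picard}) the divisor $C$ is nef, so $-K_Y$ is again ample, while $\Q$-factoriality is preserved by Lemma~\ref{p-insep-qfac}. Now the base field is perfect, so $Y\otimes\overline{k}$ \emph{is} a normal projective RCC variety, its Albanese is a point, and ``numerically trivial $\Rightarrow$ torsion'' follows; finally one descends the torsion statement along the purely inseparable morphism $Y\to X$ (a power of Frobenius, or a norm argument, shows $f^*D$ torsion implies $D$ torsion, and $\Pic(X)\to\Pic(X\otimes_k\overline k)$ is injective once $H^0(X,\MO_X)=k$). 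If you want to avoid Theorem~\ref{bc-main}, you would instead have to prove directly that the affine part of $\Pic^0$ of $X\otimes_{k'}\overline{k'}$ is unipotent (using that the geometric reduction of a normal variety is geometrically unibranch, so no torus can appear) and hence $p$-power torsion; your proposal asserts neither this nor the paper's reduction, so the proof as it stands has a genuine gap.
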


As other applications, 
we obtain a weakening of cone theorems for surfaces and threefolds over an arbitrary field of positive characteristic. 

\begin{thm}[Theorem~\ref{surface-cone}]\label{0surface-cone}
Let $k$ be a field of characteristic $p>0$. 
Let $X$ be a projective normal surface over $k$ and 
let $\Delta$ be an effective $\R$-divisor such that $K_X+\Delta$ is $\R$-Cartier. 
Let $A$ be an ample $\R$-Cartier $\R$-divisor.
Then, there exist finitely many curves $C_1, \cdots, C_m$ such that 
$$\overline{NE}(X)=\overline{NE}(X)_{K_X+\Delta+A \geq 0}+\sum_{i=1}^m \mathbb R_{\geq 0}[C_i].$$
\end{thm}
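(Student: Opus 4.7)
The plan is to reduce, via Theorem~\ref{0bc-main}, to the cone theorem for normal projective surfaces over the perfect field $k^{p^{-\infty}}$, which is known in positive characteristic by descent from the algebraic closure. After replacing $k$ by its algebraic closure in $K(X)$ (a finite extension, harmless for the cone-theorem statement), we may assume $k$ is algebraically closed in $K(X)$. Let $Y$ be the normalization of $(X\otimes_k k^{p^{-\infty}})_{\red}$ and let $f\colon Y\to X$ be the natural morphism; then $f$ is finite and surjective, and $Y$ is a normal projective surface over the perfect field $k^{p^{-\infty}}$.

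The first step is to produce an effective $\R$-divisor $C$ on $Y$ with
$$f^*(K_X+\Delta)\sim_{\R} K_Y+C+f^*\Delta.$$
Applying Theorem~\ref{0bc-main} to the regular open subset $X_{\reg}\subset X$ yields the relation on $f^{-1}(X_{\reg})$; since $Y$ is normal and $Y\setminus f^{-1}(X_{\reg})$ is zero-dimensional in the surface $Y$, the identity extends to all of $Y$ as an equality of Weil divisors, and the $\R$-Cartierness of $K_X+\Delta$ lets us read it as an $\R$-linear equivalence. Note that $f^*A$ is $\R$-Cartier and remains ample, since $f$ is a finite morphism.

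Now apply the cone theorem to the pair $(Y, C+f^*\Delta)$ on the normal projective surface $Y$ over the perfect field $k^{p^{-\infty}}$, with ample perturbation $f^*A$. We obtain finitely many curves $\tilde C_1,\dots,\tilde C_m$ on $Y$ with
$$\overline{NE}(Y)=\overline{NE}(Y)_{K_Y+(C+f^*\Delta)+f^*A\geq 0}+\sum_{i=1}^m \R_{\geq 0}[\tilde C_i].$$
Push forward by $f_*\colon N_1(Y)\to N_1(X)$. Since $f$ is finite and surjective, $f_*f^*[D]=(\deg f)[D]$ for every irreducible curve $D\subset X$, and hence $f_*\overline{NE}(Y)=\overline{NE}(X)$. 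By the projection formula,
$$(K_X+\Delta+A)\cdot f_*\alpha \;=\; f^*(K_X+\Delta+A)\cdot\alpha \;=\; (K_Y+C+f^*\Delta+f^*A)\cdot\alpha$$
for every $\alpha\in N_1(Y)$, so $f_*$ carries the nonnegative part of the cone upstairs into the nonnegative part downstairs. Taking $C_1,\dots$ to be the irreducible components of the cycles $f_*\tilde C_1,\dots,f_*\tilde C_m$ then gives the stated decomposition.

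The main obstacle is the first step: Theorem~\ref{0bc-main} is stated for regular $X$, but here $X$ is only assumed normal, so one must use that the singular locus of a normal surface is zero-dimensional in order to extend the linear-equivalence relation from the regular locus to all of $Y$. A secondary technical point is citing the cone theorem for $(Y,C+f^*\Delta)$ on a normal surface over a perfect, possibly non-algebraically-closed, field, which is standard via Galois descent from $\overline{k^{p^{-\infty}}}$ but must be set up carefully.
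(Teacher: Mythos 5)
There is a genuine gap, and it sits at the heart of your descent step. Your plan pushes the cone decomposition forward along $f_*\colon N_1(Y)\to N_1(X)$ using ``$f$ is finite'', $\deg f$, and the projection formula. But $Y$ is the normalization of $(X\otimes_k k^{p^{-\infty}})_{\red}$, and when $[k^{p^{-\infty}}:k]=\infty$ the morphism $f\colon Y\to X$ is integral and affine but \emph{not} finite (not even of finite type over $X$); there is no degree, curves on $Y$ are proper over $k^{p^{-\infty}}$ but not over $k$, and the intersection theories are taken over different base fields (Euler characteristics over $k^{p^{-\infty}}$ versus over $k$), so neither the cycle push-forward $f_*$ nor the projection formula you invoke is defined. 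This is precisely why the paper never descends the cone decomposition itself: it uses Lemma~\ref{lemma-cone} (from \cite{CTX}) to reduce the statement to exhibiting finitely many curves $C_1,\dots,C_m$ on $X$ such that for every ample $H$ the nef threshold of $K_X+\Delta+A$ satisfies $(K_X+\Delta+A+a_HH)\cdot C_i=0$ for some $i$, and then transfers only the \emph{vanishing} of intersection numbers from the base change (over $\overline k$, via \cite[Theorem~3.13(2)]{T}) back to $X$ using Lemma~\ref{intersection-bc} and Remark~\ref{rem-nef-thre}, which compare signs of intersection numbers under arbitrary field extensions and so are insensitive to the infinite-degree issue.

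A second, related gap is your input ``cone theorem over the perfect field $k^{p^{-\infty}}$, standard via Galois descent from the algebraic closure''. The reference available here (\cite{T}) is over algebraically closed fields, and descending the full cone decomposition along the infinite separable extension $\overline{k}/k^{p^{-\infty}}$ runs into exactly the same obstruction as above (no finite model over an algebraically closed field, no cycle push-forward, and the Picard number may jump); in effect you are assuming a special case of the theorem you are trying to prove. One can partially repair your route over $k^{p^{-\infty}}$ by replacing $f_*$ with the isomorphism $N_1(Y)\simeq N_1(X)$ dual to $f^*$ (using $\rho(Y)=\rho(X)$, Proposition~\ref{p-insep-picard}(3), and Lemma~\ref{intersection-bc} to see that classes of curves map to positive multiples of classes of their images), but you would still owe a proof of the cone statement over the perfect non-closed field $k^{p^{-\infty}}$, where that Picard-number equality is no longer available for the remaining (separable, infinite) extension. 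The paper's nef-threshold formulation bypasses all of this in one stroke.
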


\begin{thm}[Theorem~\ref{weak-cone}]\label{0threefold-cone}
Let $k$ be a field of characteristic $p>0$. 
Let $X$ be a projective normal $\Q$-factorial threefold over $k$ and 
let $\Delta$ be an effective $\R$-divisor whose coefficients are at most $1$. 
If $K_X+\Delta$ is not nef, then there exist an ample $\Q$-divisor $A$ and 
finitely many curves $C_1, \cdots, C_m$ such that 
$K_X+\Delta+A$ is not nef and that 
$$\overline{NE}(X)=\overline{NE}(X)_{K_X+\Delta+A \geq 0}+\sum_{i=1}^m \mathbb R_{\geq 0} [C_i].$$
\end{thm}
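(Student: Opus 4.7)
The strategy is to transfer the situation to the perfect closure via Theorem~\ref{0bc-main}, apply an available weak cone theorem over an algebraically closed field, and then descend back to $X$ using that the base change morphism is finite, surjective and purely inseparable.

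First, I would set $k' := k^{p^{-\infty}}$, let $Y$ denote the normalization of $(X \otimes_k k')_{\red}$, and let $f : Y \to X$ be the induced finite purely inseparable morphism. By Theorem~\ref{0bc-main} there is an effective Weil divisor $C$ on $Y$ with $K_Y + C = f^*K_X$ on $Y_{\reg}$; since $Y$ is normal, this identity extends uniquely to one of Weil $\mathbb{Q}$-divisors. Putting $\Delta_Y := f^*\Delta + C$ yields
\[
K_Y + \Delta_Y = f^*(K_X + \Delta),
\]
which fails to be nef because $f$ is finite surjective, so that nefness is both preserved and reflected under $f^*$. A further base change to $\overline{k}$ and normalization then reduces us to the case of an algebraically closed ground field of characteristic $p$.

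Second, I would invoke a weak cone theorem for projective normal $\mathbb{Q}$-factorial threefolds over $\overline{k}$ to produce, from the non-nef $K_Y + \Delta_Y$, an ample $\mathbb{Q}$-divisor $A_Y$ and finitely many curves $C'_1, \dots, C'_m \subset Y$ such that $K_Y + \Delta_Y + A_Y$ remains non-nef and
\[
\overline{NE}(Y) = \overline{NE}(Y)_{K_Y+\Delta_Y+A_Y \geq 0} + \sum_{i=1}^{m} \mathbb{R}_{\geq 0}[C'_i].
\]
To descend, I would choose an ample $\mathbb{Q}$-divisor $A$ on $X$ with $f^*A = A_Y$ (possible after rescaling, since $f^*$ has cofinite image in $\Pic \otimes \mathbb{Q}$) and set $C_i := f_*(C'_i)$. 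Because $f$ is finite purely inseparable, $f^*$ and $f_*$ are mutually quasi-inverse on $N^1$ and $N_1$ with real coefficients (up to multiplication by $\deg f$) and carry closed cones onto closed cones and nef classes onto nef classes. The advertised decomposition of $\overline{NE}(X)$, and the non-nefness of $K_X + \Delta + A$, then follow by pushforward.

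The main obstacle lies in the second step. After base change, $\Delta_Y = f^*\Delta + C$ generally has coefficients exceeding $1$ — the pullback $f^*$ scales coefficients along ramified components and $C$ carries no a priori coefficient bound — and the purely inseparable base change need not preserve $\mathbb{Q}$-factoriality of $Y$. One therefore needs a cone theorem over $\overline{k}$ whose hypotheses are weak enough to cover this setting, or an auxiliary device to replace $\Delta_Y$ by a boundary meeting the standard hypotheses without losing the negativity of $K_Y + \Delta_Y$ on the relevant part of $\overline{NE}(Y)$. Once this is arranged, the descent via the projection formula is routine.
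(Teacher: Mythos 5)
Your first step (producing $K_Y+\Delta_Y=f^*(K_X+\Delta)$ via Theorem~\ref{bc-main}) matches the paper, but the second step is a genuine gap, and you have in fact named it yourself without closing it: there is no cone theorem over $\overline k$ that you can quote for $(Y,\Delta_Y)$, because the threefold cone theorem of \cite{CTX} requires boundary coefficients at most $1$, whereas $\Delta_Y=f^*\Delta+C$ carries the conductor-type divisor $C$ with no coefficient bound. Leaving this as ``one needs a cone theorem with weaker hypotheses or an auxiliary device'' is precisely the missing content, since the entire proof in the paper is organized around this obstruction. There is a second problem in your descent: the base change here is to $\overline k$, not merely to $k^{p^{-\infty}}$, so $f^*:N^1(X)_{\Q}\to N^1(Y)_{\Q}$ need not be surjective (the Picard number can jump under separable extensions; Proposition~\ref{p-insep-picard} only covers purely inseparable ones), and likewise Lemma~\ref{p-insep-qfac} does not give $\Q$-factoriality of $Y$. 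Hence the ample divisor $A_Y$ produced by a black-box cone theorem on $Y$ need not be (even numerically) of the form $f^*A$, and you cannot prescribe it, because the statement being invoked only guarantees \emph{some} ample divisor; so ``choose $A$ with $f^*A=A_Y$'' does not work, and pushing the whole cone decomposition forward is not justified.

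The paper circumvents both issues by a dichotomy on the nef threshold divisors $K_X+\Delta+A_0+a_HH$. If one of them is big, a small perturbation produces an ample $\Q$-divisor $A$ with $K_X+\Delta+A$ big and not nef, and one applies Theorem~\ref{threefold-effective-cone}; that theorem is proved \emph{on $X$ itself over the imperfect field}, by writing $K_X+\Delta\equiv E$ effective, restricting to the normalizations $E_i^N$ of its components via adjunction, and using the surface cone theorem (Theorem~\ref{surface-cone}), where no coefficient restriction is needed. If none of them is big, one sets $A:=A_0$, passes to $Y$ over $\overline k$ with $K_Y+\Delta_Y=f^*(K_X+\Delta)$, and applies only \cite[Lemma~5.3]{CTX}: a length bound $f^*A\cdot C'\leq -(K_Y+\Delta_Y)\cdot C'\leq 2\dim X$ for curves computing the nef threshold, which tolerates the unbounded boundary because the estimate comes from non-bigness, not from a coefficient hypothesis. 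This yields finitely many curves $C_i'$ on $Y$, whose images $C_i$ on $X$ satisfy $(K_X+\Delta+A+a_HH)\cdot C_i=0$ by Lemma~\ref{intersection-bc}, and the cone decomposition follows from the nef-threshold criterion Lemma~\ref{lemma-cone} — no pushforward of cones or descent of ample classes is ever needed. Your argument would need to be rebuilt along these lines (or supply a new cone theorem on $Y$ with prescribed pulled-back ample divisor and unbounded boundary) to be complete.
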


\medskip
\textbf{Proof of the main theorem:}  
We overview the proof of Theorem~\ref{0bc-main}. 
Let $k$ be a field of characteristic $p>0$. 
Let $X$ be a regular variety over $k$. 
Set $Y$ to be the normalization of $(X\otimes_k k^{p^{-\infty}})_{\red}$. 
Note that $X\otimes_k k^{p^{-\infty}}$ is irreducible 
because $\Spec\,k^{p^{-\infty}} \to \Spec\,k$ is a universally homeomorphism. 
We assume that $X$ is not geometrically reduced, 
otherwise the proof is easy.

By our assumption, $X\otimes_k k^{p^{-\infty}}$ is not reduced. 
However it is difficult to 
compare dualizing sheaves of $X\otimes_k k^{p^{-\infty}}$ and $(X\otimes_k k^{p^{-\infty}})_{\red}$. 
Thus our main idea is to avoid non-reduced schemes. 
For this, we use the following lemma. 

\begin{lem}[Lemma~\ref{p-change-field}]\label{0-change-field}
Let $k$ be a field of characteristic $p>0$. 
Let $F/k$ and $k'/k$ be field extensions. 
Assume the following two conditions. 
\begin{enumerate}
\item{$k$ is purely inseparably closed in $F$, that is, if $x\in F$ satisfies $x^p \in k$, 
then $x\in k$.}
\item{The field extension $k'/k$ is purely inseparable and $[k':k]=p$. }
\end{enumerate}
Then $F\otimes_k k'$ is a field. 
\end{lem}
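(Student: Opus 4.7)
The plan is to reduce the statement to the irreducibility of a simple polynomial, and then to deduce this irreducibility directly from the hypothesis that $k$ is purely inseparably closed in $F$.

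First I would use the fact that $k'/k$ is purely inseparable of degree $p$ to write $k' = k(\alpha)$ where $\alpha^p = a$ for some $a \in k$. Since $[k':k] = p$ and $\alpha \notin k$, the element $a$ must fail to be a $p$-th power in $k$, i.e., $a \in k \setminus k^p$; otherwise $\alpha$ would already lie in $k$. Consequently the minimal polynomial of $\alpha$ over $k$ is $t^p - a$, and one has a canonical isomorphism
\[
F \otimes_k k' \;\simeq\; F[t]/(t^p - a).
\]

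Next I would show that $t^p - a$ remains irreducible over $F$. Over a field of characteristic $p$ a polynomial of the form $t^p - a$ is irreducible if and only if $a$ is not a $p$-th power, so the task is to prove $a \notin F^p$. Suppose for contradiction that $a = b^p$ for some $b \in F$. Then $b^p = a \in k$, and the hypothesis that $k$ is purely inseparably closed in $F$ forces $b \in k$. But this means $a \in k^p$, contradicting the choice of $a$.

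The conclusion is then automatic: since $t^p - a$ is irreducible over $F$, the quotient $F[t]/(t^p - a)$ is a field, hence so is $F \otimes_k k'$. There is no real obstacle here; the only delicate point is the observation that the hypothesis on $k$ inside $F$ translates exactly into the non-existence of a $p$-th root of $a$ in $F$, which is precisely the criterion needed for irreducibility of $t^p - a$ in characteristic $p$.
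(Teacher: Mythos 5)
Your proof is correct and follows essentially the same route as the paper: both identify $F\otimes_k k'$ with $F[t]/(t^p-a)$ and reduce to showing that $a$ has no $p$-th root in $F$, which is exactly what hypothesis (1) provides. Your write-up is just slightly more explicit about the irreducibility criterion for $t^p-a$ in characteristic $p$.
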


Set $F:=K(X)$. 
Since $k$ is algebraically closed in $F=K(X)$ by the assumption, 
we see that $k$ is purely inseparably closed in $F$.
Let $k'/k$ be an arbitrary purely inseparable extension with $[k':k]=p$. 
From a pair $(X, k)$, we construct a \lq\lq bigger" pair $(X_1, k_1)$. 
By Lemma~\ref{0-change-field}, we see that $X\otimes_k k'$ is an integral scheme but may not be normal. 
By taking the normalization $X_1$ of $X\otimes_k k'$, $X_1$ is a normal variety over $k'$. 
Let $k_1$ be the purely inseparable closure of $k'$ in $K(X_1)$. 
Since we can ignore codimension two closed subsets, 
we may assume that $X_1$ is regular. 
Then $k_1$ is purely inseparably closed in $K(X_1)$. 
By the inductive argument, 
we obtain sequences $k=:k_0 \subsetneq k_1 \subsetneq k_2 \subsetneq \cdots$ and 
$X=:X_0 \leftarrow X_1 \leftarrow X_2 \leftarrow \cdots$. 

Let us calculate the relation between dualizing sheaves $\omega_{X/k}$ and $\omega_{X_1/k_1}$. 
To obtain $(X_1, k_1)$ from $(X, k)$, 
we apply the following three operations: 
a base change, normalization, replacing a base field $k'$ by $k_1$ (enlarging base field). 
Under these operations, dualizing sheaves can be calculated as follows: 
\begin{itemize}
\item{$\omega_{X\otimes_k k'/k'}\simeq \beta^*\omega_{X/k}$ (base change)}
\item{$\omega_{X_1/k'} \otimes_{\MO_{X_1}} \MO_{X_1}(C_1)\simeq \nu^*\omega_{X\otimes_k k'/k'}$ (normalization)}
\item{$\omega_{X_1/k_{1}}\simeq \omega_{X_{1}/k'}$ (enlarging a base field), }
\end{itemize}
where $\beta:X\otimes_k k' \to X$ is the projection, 
$\nu:X_1 \to X\otimes_k k'$ is the normalization and $C_1$ is an effective divisor. 
Summarizing above, we obtain 
$$K_{X_1}+C_1=g^*K_X$$
where $g:X_1 \to X$ is the induced morphism. 

Therefore, it suffices to find, in advance, a finite purely inseparable extension $k \subset k_{\rm goal}$ 
such that $k=:k_0 \subset k_1 \subset k_2 \subset \cdots \subset k_{\rm goal}$ and 
that the end result $(X_{\rm goal}, k_{\rm goal})$ satisfies a good property 
($X_{\rm goal}$ is geometrically reduced over $k_{\rm goal}$ and so on). 
Such a pair $(X_{\rm goal}, k_{\rm goal})$ can be found as a finite extension model of 
the pair $((X\times_k k^{p^{-\infty}})^N_{\red}, k^{p^{-\infty}})$ where 
$(X\times_k k^{p^{-\infty}})^N_{\red}$ is the normalization of $(X\times_k k^{p^{-\infty}})_{\red}$. 

For more details, see Section~\ref{section-field} and Section~\ref{section-main}. 
In Example~\ref{double-conic}, we give an explicit calculation for the double conic curve.

\medskip
\textbf{Related results:}  
%
As a related topic, there is a classical result called Tate's genus change (cf. \cite{Schroer1} \cite{Tate}). 
It compares the genus of a given regular curve over an imperfect field 
and the one of the normalization of its purely inseparable base change of degree $p$. 

In Theorem~\ref{0MFS-RCC}, we show that general fibers of a Mori fiber space are rationally chain connected. 
In \cite{KMM}, Koll\'ar--Miyaoka--Mori show that every smooth Fano variety, 
defined over an algebraically closed field of any characteristic, is rationally chain connected. 
After that, Hacon--M\textsuperscript{c}Kernan (\cite{HM}) and \cite{Zhang} prove that 
log Fano varieties in characteristic zero are rationally connected. 
In positive characteristic, there are few results around this problem (cf. \cite{6}). 

In Theorem~\ref{0surface-cone} and Theorem~\ref{0threefold-cone}, 
we establish a cone theorem for surfaces and threefolds over a non-closed field. 
If $k$ is algebraically closed, 
then Theorem~\ref{0surface-cone} and Theorem~\ref{0threefold-cone} 
follows from \cite{T} and \cite{CTX}, respectively. 
If $X$ is a smooth projective geometrically connected variety over a non-closed field, 
then Mori's cone theorem (\cite[Theorem~1.24]{KM}) is established in Mori's original paper (\cite[Ch~2, \S3]{Mori}). 
Note that Mori's cone theorem is not known even for regular projective varieties 
because his bend and break technique depends on the smoothness assumption. 

In \cite[Theorem~2.4]{CZ}, 
Chen--Zhang consider a behavior of canonical bundles under base changes 
to attack the Iitaka conjecture in positive characteristic. 
Their setting differs from our main theorem (Theorem~\ref{0bc-main}). 
They consider a positive dimensional base although we treat the case when the base scheme is a field. 
On the other hand, they assume that the generic fiber is geometrically reduced, 
however we do not impose such an assumption.

\begin{ack}
The author would like to 
thank Professors Paolo Cascini, J\'anos~Koll\'ar, 
Joseph Lipman, Chenyang Xu, Lei Zhang for valuable comments and fruitful discussions. 
The author also thanks to the referee for many constructive suggestions. 
This work is partially supported by JSPS KAKENHI Grant Number 24224001. 
\end{ack}


\section{Preliminaries}

\subsection{Notation}\label{subsection-notation}

We will not distinguish the notations 
invertible sheaves and divisors. 
We say $L$ is an $\R$-{\em invertible sheaf} if $L \in \Pic(X)\otimes_{\Z} \R$. 
A scheme $X$ is {\em normal} if the local ring $\MO_{X, x}$ for every point $x \in X$ 
is an integrally closed integral domain. 
A scheme $X$ is {\em Gorenstein} (resp. {\em Cohen--Macaulay}, resp. $S_2$) 
if so is the local ring $\MO_{X, x}$ for every point $x \in X$. 
In particular, a Gorenstein scheme is Cohen--Macaulay.

We say $X$ is a {\em variety} over a field $k$ (or a $k$-{\em variety}) if 
$X$ is an integral scheme which is separated and of finite type over $k$. 
Note that, in general, this property is not stable under base changes. 
We say $X$ is a {\em curve} (resp. a {\em surface}, resp. a {\em threefold}) 
if $X$ is a variety over $k$ with $\dim X=1$ (resp. $\dim X=2$, resp. $\dim X=3$). 
For a scheme $X$, set $X_{\red}$ to be 
the reduced scheme whose underlying topological space is equal to $X$. 

Let $\Delta$ be an $\R$-divisor on a normal scheme. 
We write $\Delta \leq a$ if, for the prime decomposition $\Delta=\sum_{i\in I}\delta_i \Delta_i$, 
$\delta_i \leq a$ holds for every $i\in I$.

Let $k$ be a field. 
Let $X$ be a separated scheme of finite type over $k$ and 
let $C \hookrightarrow X$ be a closed immersion such that $C$ is a proper curve over $k$. 
Let $L$ be an invertible sheaf on $X$. 
We define the intersection number $L\cdot C$ by 
$$L \cdot C=\chi(C, L|_C)-\chi(C, \MO_C).$$
It is well-known that this number is an integer. 
Let $f:X \to Y$ be a proper morphism of noetherian schemes, 
and let $L$ and $M$ be an $\R$-invertible sheaves on $X$. 
We say $L$ and $M$ are {\em $f$-numerically equivalent}, written $L \equiv_f M$, if $L \cdot C=M \cdot C$ 
for every proper curve $C$ on $X$ over a closed point $y \in Y$. 
We say $L$ is {\em $f$-numerically trivial}, written $L \equiv_f 0$, 
if $L$ and $\MO_X$ are $f$-numerically equivalent. 
If $Y=\Spec\,k$ for a field $k$, then we merely wright $L \equiv 0$. 

We will freely use the notation and terminology in \cite{KM} and \cite{Kollar2}. 
For the definition of uniruled and rationally chain connected varieties, 
see \cite[Ch IV, Definition~1.1 and Definition~3.2]{Kollar1}. 
For the definition of dualizing sheaves and canonical divisors, 
see Subsection~\ref{subsection-dualizing}.

For a field $k$ of characteristic $p>0$, 
we fix an algebraic closure $\overline k$ and 
set $k^{p^{-\infty}}:=\bigcup_{e \geq 0}\{a \in \overline k\,|\, a^{p^e}\in k\}.$ 

\begin{dfn}
Let $A \subset B$ be a ring extension of $\mathbb{F}_p$-algebras.  
\begin{enumerate}
\item{The {\em purely inseparable closure} $C$ of $A$ in $B$ is the intermediate ring 
$A\subset C \subset B$ such that 
for every $b\in B$ with $b^{p^e}\in A$ for some exponent $e \geq 0$, 
then $b\in C$.}
\item{$A$ is {\em purely inseparably closed} in $B$ if 
the purely inseparable closure $C$ of $A$ in $B$ satisfies $A=C$. }
\end{enumerate}
\end{dfn}

Note that $A$ is purely inseparably closed in $B$ if and only if 
an element $\gamma\in B$ satisfies $\gamma^{p}\in A$, then $\gamma\in A$.

\subsection{Basic properties of purely inseparable base changes}

In this subsection, we summarize basic properties of purely inseparable base changes. 
Some of them may be known results.

\begin{lem}\label{geom-irreducible}
Let $k$ be a field. 
Let $X$ be a normal variety over $k$. 
Then the following assertions hold. 
\begin{enumerate}
\item{If $X$ is geometrically connected, then $X$ is geometrically irreducible over $k$. }
\item{Let $k \subset k'$ be a field extension. 
Then the normalization morphism of $(X\otimes_k k')_{\red}$ is a universal homeomorphism. }
\end{enumerate}
\end{lem}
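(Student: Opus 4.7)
The plan is to treat parts (1) and (2) in sequence, exploiting in both cases the tower $k \subset k^{\rm sep} \subset \overline{k}$ together with the fact that purely inseparable field extensions induce universal homeomorphisms under base change.

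For (1), I would first check that $X_{k^{\rm sep}} := X \otimes_k k^{\rm sep}$ is normal: writing $k^{\rm sep}/k$ as a filtered union of finite separable extensions, each finite separable base change is \'etale (hence preserves normality), and normality is stable under the resulting filtered colimit. The extension $\overline{k}/k^{\rm sep}$ is purely inseparable, so $X_{\overline{k}} \to X_{k^{\rm sep}}$ is a universal homeomorphism, and in particular preserves the sets of irreducible and connected components. By hypothesis $X_{\overline{k}}$ is connected, hence so is $X_{k^{\rm sep}}$. Since a connected locally Noetherian normal scheme is irreducible (distinct components would have to meet at a non-normal point), $X_{k^{\rm sep}}$ is irreducible. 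Pulling back along the universal homeomorphism then gives irreducibility of $X_{\overline{k}}$, as required.

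For (2), let $\nu \colon Y \to Z := (X \otimes_k k')_{\red}$ denote the normalization morphism. The heart of the argument is the purely inseparable case, which I would treat first. Assuming $k'/k$ is purely inseparable, $X_{k'} \to X$ is a universal homeomorphism (being a base change of the radicial map $\Spec k' \to \Spec k$), so $Z \to X$ is one as well, $Z$ is irreducible, and its function field $L' := (K(X) \otimes_k k')_{\red}$ is purely inseparable over $K(X)$. A direct comparison of integral closures identifies $Y$ with the normalization of the normal scheme $X$ in $L'$, and since $L'/K(X)$ is purely inseparable, the induced morphism $Y \to X$ is itself a universal homeomorphism. Then $\nu$ is finite (varieties are Nagata), surjective (since $Y \to X$ is), and radicial: for each $y \in Y$, both $\kappa(y)$ and $\kappa(\nu(y))$ are purely inseparable over the residue field of their common image in $X$, which forces $\kappa(y)/\kappa(\nu(y))$ to be purely inseparable. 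Hence $\nu$ is a universal homeomorphism. For a general field extension $k'/k$, one first base-changes along a normality-preserving subextension so as to reduce to the purely inseparable case.

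The main obstacle I anticipate is precisely that reduction step in (2) when $k'/k$ is transcendental or mixed: one needs an intermediate field $k \subset k_0 \subset k'$ such that base change along $k_0/k$ preserves normality and $k'/k_0$ is purely inseparable. For algebraic $k'/k$ this is classical (take $k_0 = k' \cap k^{\rm sep}$); for transcendental parts one uses that polynomial and rational-function extensions preserve normality; in full generality one invokes the preservation of normality under geometrically regular base change. Once this reduction is in place, the purely inseparable argument above concludes the proof.
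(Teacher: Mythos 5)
Your proof is correct, and for part (2) it follows essentially the same route as the paper: decompose $k'/k$ so as to reduce to the purely inseparable case, then show the normalization is a universal homeomorphism by identifying it with the normalization of $X$ in the purely inseparable extension $L'/K(X)$ and tracking purely inseparable residue-field extensions. (The paper pins down the reduction step explicitly with the tower $k\subset k_1\subset k_2\subset k'$, purely transcendental then separable algebraic then purely inseparable, which is a concrete instance of what you call the geometrically regular reduction.) Where you genuinely diverge is in part (1): the paper first proves (2) and then deduces (1) by applying (2) with $k'=\overline k$ to the normalization of $(X\otimes_k\overline k)_{\red}$, whereas you argue directly, noting that $X\otimes_k k^{\rm sep}$ is already normal (by \'etale base change and passage to the filtered limit), hence irreducible once connected, and then transfer irreducibility to $X\otimes_k\overline k$ along the purely inseparable universal homeomorphism. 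Your version avoids invoking the normalization and decouples (1) from (2), which is a small but genuine simplification; the paper's version has the advantage of reusing the machinery of (2) and keeping the proof self-contained around a single idea.
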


\begin{proof}
Note that if $X$ is normal and connected, then $X$ is integral. 
Thus if $k$ is characteristic zero, then both of the assertions are clear. 
We assume that $k$ is of characteristic $p>0$. 

Assuming (2), we prove (1). 
We apply (2) for the algebraic closure $k':=\overline k$. 
Then we see that the normalization morphism 
$(X\otimes_k \overline k)_{\red}^N \to (X\otimes_k \overline k)_{\red}$ 
is a universal homeomorphism. 
Since $(X\otimes_k \overline k)_{\red}$ is connected by our assumption, so is $(X\otimes_k \overline k)_{\red}^N$. 
Thus $(X\otimes_k \overline k)_{\red}^N$ is connected and normal, hence it is integral. 
Therefore $X\otimes_k \overline k$ is irreducible because 
the composition morphism 
$$(X\otimes_k \overline k)_{\red}^N \to (X\otimes_k \overline k)_{\red} \to X\otimes_k \overline k$$ 
is a universally homeomorphism. 
This implies (1). 

\medskip

It suffices to show (2). 
We reduce the proof to the case when $k \subset k'$ is a purely inseparable extension. 
For a field extension $k \subset k'$, we obtain 
the following decomposition 
$$k \subset k_1 \subset k_2 \subset k',$$
where $k_1/k$ is purely transcendental, $k_2/k_1$ is algebraic separable, 
and $k'/k_2$ is purely inseparable. 
We can check that $X\times_k k_1$ is normal and hence $X\times_k k_2$ is also normal. 
Therefore, we may assume that $k \subset k'$ is purely inseparable. 

\medskip

Set $Y$ to be the normalization of $(X\otimes_k k')_{\red}$. 
Then, we obtain the following commutative diagram 
$$\begin{CD}
@. Y \\
@. @VVh V\\
X @<\beta<< (X\otimes_k k')_{\red}\\
@VVV @VVV\\
\Spec\,k @<<< \Spec\,k'.
\end{CD}$$
Since $X$ and $Y$ are normal and $Y \to X$ is an affine integral surjective morphism 
(i.e. the corresponding ring extensions are integral), 
$Y$ can be obtained by the integral closure of $X$ in $K(Y)$. 
Since the field extension $K(Y)/K(X)$ is purely inseparable, 
the ring extensions 
$$\MO_X(U) \hookrightarrow \MO_{(X\otimes_k k')_{\red}}(\beta^{-1}(U)) \hookrightarrow 
\MO_Y(h^{-1}\beta^{-1}(U))$$
is purely inseparable for every affine open subset $U$ on $X$. 
Since $\beta:(X\otimes_k k')_{\red} \to X$ is a universally homeomorphism, 
the ring extension 
$$\MO_{(X\otimes_k k')_{\red}}(V) \hookrightarrow \MO_Y(h^{-1}V)$$
is purely inseparable for every affine open subset $V$ of $(X\otimes_k k')_{\red}$. 
Therefore, $h$ is a universally homeomorphism. 
\end{proof}

Positivity of intersection number does not change under base changes. 

\begin{lem}\label{intersection-bc}
Let $k \subset k'$ be a field extension. 
Let $C$ be a proper $k$-curve and 
fix a proper $k'$-curve $B$ equipped with a closed immersion $B \hookrightarrow C\otimes_k k'$. 
Set $\beta:B \hookrightarrow C\otimes_k k' \to C$ to be the composite morphism. 
Let $L$ be an invertible sheaf on $C$. 
Then, the following assertions hold. 
\begin{enumerate}
\item{$L\cdot C > 0$ if and only if $\beta^*L\cdot B> 0$. }
\item{$L\cdot C < 0$ if and only if $\beta^*L\cdot B< 0$. }
\item{$L\cdot C = 0$ if and only if $\beta^*L\cdot B= 0$. }
\end{enumerate}
\end{lem}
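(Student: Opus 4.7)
The plan is to prove the strengthened statement $\beta^{*}L \cdot B = \lambda\cdot(L \cdot C)$ for some positive rational $\lambda$, which immediately yields (1), (2) and (3). My strategy is to reduce to a finite base-field extension and then invoke the projection formula.

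First I reduce to the case $k'/k$ finite by spreading out. The closed immersion $B\hookrightarrow C\otimes_k k'$ is cut out locally by finitely many equations, whose coefficients lie in a finitely generated (hence finite over $k$) subextension $k\subset k''\subset k'$; this yields a closed subscheme $B_0\hookrightarrow C\otimes_k k''$ with $B=B_0\otimes_{k''}k'$. Integrality of $B$ descends to $B_0$ by faithful flatness of $k'/k''$, and flat base change gives $\chi_{k'}(B,\beta^{*}L)=\chi_{k''}(B_0,\beta_0^{*}L)$ (and similarly for the structure sheaves), so $(\beta^{*}L\cdot B)_{k'} = (\beta_0^{*}L\cdot B_0)_{k''}$. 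Replacing $(B,k')$ by $(B_0,k'')$, I may assume $k'/k$ is finite of some degree $d$.

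In this case $B$ is itself a proper integral $k$-curve, and $\beta\colon B\to C$ is a finite surjective morphism of proper integral $k$-curves: finiteness is immediate from the factorization $B\hookrightarrow C\otimes_k k'\to C$, and surjectivity holds because any $1$-dimensional irreducible closed subscheme of $C\otimes_k k'$ dominates the integral curve $C$. The projection formula over $k$ then gives
\[ (\beta^{*}L\cdot B)_k \;=\; L\cdot\beta_{*}[B] \;=\; [K(B):K(C)]\cdot(L\cdot C)_k. \]
Comparing the two ways of computing $(\beta^{*}L\cdot B)_k$---either as above, or as $d\cdot(\beta^{*}L\cdot B)_{k'}$ via the identity $\chi_k = [k':k]\cdot\chi_{k'}$---one obtains $(\beta^{*}L\cdot B)_{k'}=\frac{[K(B):K(C)]}{d}\cdot(L\cdot C)$, a positive rational multiple of $L\cdot C$.

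The main technical obstacle is the spreading-out step, which requires standard finite-presentation arguments together with faithfully flat descent of reducedness and irreducibility; once past that, the projection formula does all the work and the three sign equivalences follow at once.
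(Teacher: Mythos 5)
Your treatment of the finite case is correct and is a reasonable variant of the paper's argument: the paper also reduces to a finite extension and then concludes from the fact that, for a finite surjective morphism of proper integral $k$-curves, ampleness of an invertible sheaf is preserved and detected by pullback; your projection-formula computation $(\beta^*L\cdot B)_{k'}=\frac{[K(B):K(C)]}{[k':k]}(L\cdot C)$ proves a slightly stronger proportionality statement and is equally standard. The bookkeeping between $\chi_k$ and $\chi_{k'}$ and the descent of integrality from $B$ to $B_0$ are also fine.

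The gap is in the reduction step: the parenthetical ``finitely generated (hence finite over $k$)'' is false unless $k'/k$ is algebraic. The lemma is stated for an arbitrary field extension, and if $k'$ has positive transcendence degree over $k$ (e.g.\ $k'=k(t)$ or $\overline{k(t)}$), the subextension $k''$ generated by the coefficients of the equations of $B$ need not be finite, so your argument as written never reaches the case it actually handles. To close this you need one further (routine but necessary) step, for instance: factor the finitely generated extension $k''/k$ as a purely transcendental extension $k_1/k$ followed by a finite extension $k''/k_1$; over $k_1$ the scheme $C\otimes_k k_1$ is still an integral proper curve, so the only closed one-dimensional integral subscheme is $C\otimes_k k_1$ itself and flat base change gives $(L\otimes_k k_1)\cdot (C\otimes_k k_1)=L\cdot C$, after which your finite-extension argument applies to $k''/k_1$. (Alternatively, spread $B$ out to a flat proper family over a $k$-variety with function field $k''$ and specialize to a closed point, using local constancy of Euler characteristics in flat proper families.) This is presumably what the paper's ``standard argument'' refers to; without it your proof only covers algebraic extensions $k'/k$.
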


\begin{proof}
(2) follows from (1). 
(3) holds by (1) and (2). 
Thus, we only show (1). 

By a standard argument, 
we can assume that $k \subset k'$ is a finite extension. 
Then, $B \to C$ is a finite surjective $k$-morphism between proper $k$-curves. 
The assertion holds from the fact that 
$L$ is ample if and only if $\beta^*L$ is ample.
\end{proof}

By a purely inseparable base change, 
Picard numbers do not change.

\begin{prop}\label{p-insep-picard}
Let $k$ be a field of characteristic $p>0$. 
\begin{enumerate}
\item{
Let $f:X \to Y$ be a finite $k$-morphism between proper $k$-schemes. 
If $f$ is a universal homeomorphism, then $\rho(X)=\rho(Y)$.}
\item{Let $k \subset k'$ be a (possibly infinite) purely inseparable field extension. 
Let $X$ be a proper scheme over $k$. 
Then, $\rho(X)=\rho(X\otimes_k k')$.}
\item{Let $X$ be a proper normal variety over $k$. 
Set $Y$ to be the normalization of $(X\otimes_k k^{p^{-\infty}})_{\red}$. 
Then, $\rho(X)=\rho(Y)$. }
\end{enumerate}
\end{prop}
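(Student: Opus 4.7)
For (1), the key input is the standard fact that for a finite universal homeomorphism $f:X\to Y$ of Noetherian $\mathbb{F}_p$-schemes, some iterated absolute Frobenius $F_Y^e$ of $Y$ factors through $f$: there exists $h:Y\to X$ with $f\circ h=F_Y^e$ (since the integral extension $\MO_Y\hookrightarrow f_*\MO_X$ is purely inseparable with a uniform exponent bound on a Noetherian base). Since $f$ is a monomorphism (being radicial), composing the identity $f\circ h\circ f=F_Y^e\circ f=f\circ F_X^e$ on the left by $f$ gives $h\circ f=F_X^e$. Now $(F^e)^*L=L^{\otimes p^e}$, so the $e$-th Frobenius acts on $N^1(\,\cdot\,)_{\mathbb Q}$ as multiplication by $p^e$, which is an isomorphism. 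Hence $f^*$ admits a one-sided inverse up to nonzero rational scalars on both sides, so it is itself an isomorphism $N^1(Y)_{\mathbb Q}\xrightarrow{\sim}N^1(X)_{\mathbb Q}$, giving $\rho(X)=\rho(Y)$.

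For (2), first suppose $[k':k]<\infty$. Then the projection $X\otimes_k k'\to X$ is finite and a universal homeomorphism (being the base change of $\Spec k'\to \Spec k$, which is such when $k'/k$ is purely inseparable), so (1) applies. For general $k'$, write $k'=\bigcup_\lambda k'_\lambda$ as a filtered union of finite purely inseparable subextensions; the finite case yields $\rho(X\otimes_k k'_\lambda)=\rho(X)$ for every $\lambda$. Since $X\otimes_k k'=\varprojlim X\otimes_k k'_\lambda$ with affine transition morphisms, one has $\Pic(X\otimes_k k')=\varinjlim \Pic(X\otimes_k k'_\lambda)$. If a line bundle $L$ on some $X\otimes_k k'_\lambda$ pulls back to a numerically trivial bundle on $X\otimes_k k'$, then for any proper curve $C\subset X\otimes_k k'_\lambda$ one picks an irreducible component $B$ of $(C\otimes_{k'_\lambda}k')_{\red}$ and applies Lemma~\ref{intersection-bc} to conclude $L\cdot C=0$; so $L$ is already numerically trivial on $X\otimes_k k'_\lambda$. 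Combined with surjectivity of $\varinjlim\Pic(X\otimes_k k'_\lambda)\to\Pic(X\otimes_k k')$, this yields $N^1(X\otimes_k k')_{\mathbb Q}=\varinjlim N^1(X\otimes_k k'_\lambda)_{\mathbb Q}$, whose common dimension is $\rho(X)$.

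For (3), chain three universal homeomorphisms. First $\rho(X)=\rho(X\otimes_k k^{p^{-\infty}})$ by (2). Next, the reduction map $(X\otimes_k k^{p^{-\infty}})_{\red}\to X\otimes_k k^{p^{-\infty}}$ is a finite universal homeomorphism on the Noetherian scheme $X\otimes_k k^{p^{-\infty}}$ (which is of finite type over the field $k^{p^{-\infty}}$), so (1) preserves $\rho$. Finally, the normalization $Y\to (X\otimes_k k^{p^{-\infty}})_{\red}$ is finite (since $k^{p^{-\infty}}$ is perfect, so the scheme is excellent) and is a universal homeomorphism by Lemma~\ref{geom-irreducible}(2), giving one more application of (1).

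The main obstacle is part (2): one must ensure that passing from the finite case to the infinite colimit respects numerical equivalence (not only linear equivalence). This is where Lemma~\ref{intersection-bc} is essential, letting one descend the test curves through the purely inseparable tower so that numerical triviality downstairs can be detected from numerical triviality upstairs.
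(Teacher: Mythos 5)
Your parts (2) and (3) run along essentially the same lines as the paper: for (2) you reduce numerical questions to a finite subextension via Lemma~\ref{intersection-bc} (injectivity/descent of numerical triviality) together with the fact that any line bundle on $X\otimes_k k'$ is defined over some finite intermediate field, and then invoke the finite case, which is exactly the paper's argument for the map $\widetilde\beta$ being bijective; for (3) you chain the same universal homeomorphisms (reduction and normalization, the latter via Lemma~\ref{geom-irreducible}(2)) that the paper uses. The genuine difference is in (1): the paper simply cites Keel's Lemma~1.4(3) \cite{Keel}, whereas you reprove it from scratch via the Frobenius factorization of a finite universal homeomorphism. That is a legitimate, self-contained alternative (it is in essence the argument behind the cited lemma), and it buys independence from the reference at the cost of having to establish the factorization carefully.

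In that step, however, one justification is incorrect as written: a radicial (universally injective) morphism need \emph{not} be a monomorphism of schemes --- the absolute Frobenius of an imperfect field is a finite universal homeomorphism which is not a monomorphism --- so you cannot cancel $f$ on the left in $f\circ(h\circ f)=f\circ F_X^e$ to conclude $h\circ f=F_X^e$. Likewise, $\MO_Y\to f_*\MO_X$ need not be injective (take $X=Y_{\red}$ with $Y$ non-reduced), so ``integral extension $\MO_Y\hookrightarrow f_*\MO_X$'' is imprecise, although the kernel is nilpotent and the factorization statement survives. Both points are repairable: the identity $h\circ f=F_X^e$ does hold for the $h$ constructed affine-locally by $b\mapsto b^{p^e}$ (and equality of morphisms can be checked locally); alternatively, avoid it altogether by observing that $h$ is itself a finite universal homeomorphism, so some Frobenius power factors through $h$ as well, whence $h^*$ has one-sided inverses up to positive rational scalars on both sides and is therefore an isomorphism on numerical classes with $\Q$-coefficients, and then $h^*\circ f^*=p^e\,\mathrm{id}$ forces $f^*$ to be an isomorphism too. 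With one of these fixes, your proof of (1), and hence the whole proposal, is correct.
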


\begin{proof}
(1) 
See \cite[Lemma~1.4(3)]{Keel}.

(2) 
Let $\beta:X\otimes_k k' \to X$ be the projection. 
Fix an invertible sheaf $L$ on $X$. 
By Lemma~\ref{intersection-bc}, 
$L \equiv 0$ if and only if $\beta^*L\equiv 0$. 
Thus, we obtain a $\Q$-linear map 
$$\tilde \beta:({\rm Pic}(X)/\equiv)_{\Q} \to ({\rm Pic}(X\otimes_k k')/\equiv)_{\Q},\,\,\, L \mapsto \beta^*L$$ 
which is injective, where $M_{\Q}:=M\otimes_{\Z} \Q$ for a $\Z$-module $M$. 
We show that $\tilde \beta$ is surjective. 
Let $L$ be an invertible sheaf on $X\otimes_k k'$. 
Then, we can find an intermediate field $k\subset k_1 \subset k'$ 
such that $k \subset k_1$ is a finite extension and that 
$L$ is defined over $k_1$, that is, there is an invertible sheaf on $L_1$ on $X\otimes_k k_1$ 
whose pull-back is $L$. 
By (1), we can find $L_X\in {\rm Pic}(X)_{\Q}$ such that $\beta_1^*L_X \equiv L$ 
where $\beta_1:X \otimes_k k_1 \to X$. 
This implies the surjectivity of $\tilde \beta$. 

(3) 
By Lemma~\ref{geom-irreducible}, 
$Y \to X\otimes_k k^{p^{-\infty}}$ is a universal homeomorphism. 
Then, the assertion follows from (1) and (2). 
\end{proof}

A purely inseparable cover of a $\Q$-factorial variety is $\Q$-factorial. 

\begin{lem}\label{p-insep-qfac}
Let $f:X \to Y$ be a morphism 
of noetherian integral normal schemes. 
If $f$ is homeomorphic and $Y$ is $\Q$-factorial, then $X$ is $\Q$-factorial. 
\end{lem}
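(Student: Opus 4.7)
The plan is to establish $\Q$-factoriality of $X$ one prime divisor at a time, transferring Cartier multiples from $Y$ via pullback along $f$. Since $f$ is a homeomorphism of noetherian topological spaces, the map $Z\mapsto f(Z)$ gives a bijection between prime divisors on $X$ and prime divisors on $Y$ (irreducibility, closedness, and codimension are preserved by any homeomorphism, and codimension is purely topological in the noetherian setting), and it sends the generic point of each prime divisor to the generic point of its image.

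Fix a prime divisor $D\subset X$ and let $\tilde D:=f(D)$ be the corresponding prime divisor on $Y$, with generic points $\eta\in D$ and $\tilde\eta=f(\eta)\in\tilde D$. Since $Y$ is $\Q$-factorial, some positive integer multiple $m\tilde D$ is Cartier; pick a rational section $s$ of $\MO_Y(m\tilde D)$ with $\mathrm{div}(s)=m\tilde D$. Then $f^*s$ is a rational section of the invertible sheaf $f^*\MO_Y(m\tilde D)$ on $X$, and it is non-zero because the induced function field extension $K(Y)\hookrightarrow K(X)$ is injective. Its vanishing set equals $f^{-1}(\tilde D)=D$ as a subset of $X$, so $\mathrm{div}(f^*s)=eD$ for some positive integer $e$ (equal to $m$ times the ramification index of the dominant local DVR map $\MO_{Y,\tilde\eta}\to\MO_{X,\eta}$). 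Hence $eD$ is Cartier, so $D$ is $\Q$-Cartier.

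Every Weil divisor on $X$ is a finite $\Z$-linear combination of prime divisors, each of which is $\Q$-Cartier by the argument above, so clearing denominators shows that every Weil divisor on $X$ is $\Q$-Cartier; therefore $X$ is $\Q$-factorial. The delicate point will be confirming that $e\geq 1$ so that $eD$ genuinely is a Cartier positive multiple of $D$ and not zero; this is ensured by the injectivity of the function field extension together with the homeomorphism property, which forces $f^{-1}(\tilde D)=D$ set-theoretically and forces $f^*s$ to vanish along $D$.
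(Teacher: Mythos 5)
Your proof is correct and follows essentially the same route as the paper: transfer each prime divisor $D$ on $X$ to the prime divisor $f(D)$ on $Y$ via the homeomorphism, pull back a Cartier multiple $m\,f(D)$, and observe that the resulting Cartier divisor is supported exactly on $D$, hence equals $eD$ for some $e>0$. Your version merely spells out the details (generic points, rational sections, the ramification index) that the paper's proof leaves implicit.
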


\begin{proof}
Let $D_X$ be a prime divisor on $X$. 
Note that prime divisors are irreducible closed subsets of codimension one. 
Thus $\Supp(f(D_X))=:D_Y$ is also a prime divisor on $Y$. 
Since $Y$ is $\Q$-factorial, $mD_Y$ is Cartier for some $m\in\mathbb Z_{>0}$. 
Then, $f^*(mD_Y)$ is also Cartier and $\Supp(f^*(mD_Y))=\Supp D_X$. 
Thus, $f^*(mD_Y)=nD_X$ for some $n\in\mathbb Z_{>0}$. 
\end{proof}

\subsection{Known results on dualizing sheaf}\label{subsection-dualizing}

We recall the definition of dualizing sheaves and 
collect some basic properties. 
For more details, see \cite{Conrad}, \cite{RD}, and \cite{LH}. 

For a separated and of finite type morphism of noetherian schemes $f:X \to Y$, 
there is a functor 
$$f^!:D^+_{{\rm qc}}(Y) \to D^+_{{\rm qc}}(X)$$
which satisfies the following properties (cf. \cite[Ch. III, Theorem~8.7]{RD}) 
\begin{itemize}
\item{$(f \circ g)^! \simeq g^!f^!$ for two such morphisms $f:X \to Y$ and $Y \to Z$. }
\item{If $f$ is a smooth morphism of relative pure dimension $n$, 
then $f^!G^{\bullet} \simeq f^*(G^{\bullet}) \otimes_{\MO_X} \left(\bigwedge^n\Omega_{X/Y}\right)[n].$}
\item{If $f$ is a finite morphism, then 
$f^!G^{\bullet} \simeq \overline{f}^* RHom_{\MO_Y}(f_*\MO_X, G^{\bullet})$, 
where $\overline{f}^*$ is the natural exact functor (cf. \cite[Ch. III, \S 6]{RD}) 
$$\overline{f}^*:{\rm Mod}(f_*\MO_X) \to {\rm Mod}(X).$$ }
\end{itemize}
Note that, in general, we need the Nagata compactification to define $f^!$.

\begin{dfn}
Let $k$ be a field. 
Let $X$ be a $d$-dimensional separated scheme of finite type over $k$. 
We set 
$$\omega_{X/k}:=\mathcal{H}^{-d}(\alpha^!\MO_{\Spec\,k}),$$ 
where $\alpha:X \to \Spec\,k$ is the structure morphism. 
\end{dfn}

A dualizing sheaf does not change by enlarging a base field. 

\begin{lem}\label{base-invariance}
Let $k/k_0$ be a finite field extension. 
Let $X$ be a separated scheme of finite type over $k$. 
Note that $X$ is also of finite type over $k_0$. 
Then, there exists an isomorphism
$$\omega_{X/k}\simeq \omega_{X/k_0}.$$ 
\end{lem}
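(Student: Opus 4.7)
The plan is to compare the two dualizing sheaves directly from the defining formula $\omega_{X/k} = \mathcal{H}^{-d}(\alpha^! \mathcal{O}_{\Spec k})$ using the functoriality of the twisted inverse image. Denoting by $\alpha : X \to \Spec k$ and $\alpha_0 : X \to \Spec k_0$ the two structure morphisms, and by $\iota : \Spec k \to \Spec k_0$ the finite morphism associated to $k_0 \hookrightarrow k$, I would observe that $\alpha_0 = \iota \circ \alpha$, so by the composition rule for $(-)^!$ stated in the excerpt one has
$$\alpha_0^! \mathcal{O}_{\Spec k_0} \simeq \alpha^!(\iota^! \mathcal{O}_{\Spec k_0}).$$
Note also that $\dim X$ is the same whether $X$ is viewed over $k$ or over $k_0$, since $\iota$ is finite; this ensures that the truncation $\mathcal{H}^{-d}$ used to define $\omega_{X/k}$ and $\omega_{X/k_0}$ is taken at the same cohomological index.

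Next I would compute $\iota^! \mathcal{O}_{\Spec k_0}$ using the second bullet recalled from \cite{RD}: since $\iota$ is a finite morphism,
$$\iota^! \mathcal{O}_{\Spec k_0} \simeq \overline{\iota}^* R\mathcal{H}om_{\mathcal{O}_{\Spec k_0}}(\iota_* \mathcal{O}_{\Spec k},\, \mathcal{O}_{\Spec k_0}).$$
On rings this unwinds to the $k$-module $\Hom_{k_0}(k, k_0)$. The key linear-algebra fact is that for any finite field extension $k/k_0$, $\Hom_{k_0}(k, k_0)$ is a free $k$-module of rank one. (Any nonzero $k_0$-linear functional $\lambda : k \to k_0$ gives a $k$-module isomorphism $k \xrightarrow{\sim} \Hom_{k_0}(k, k_0)$ by $a \mapsto (x \mapsto \lambda(ax))$; the existence of such a $\lambda$ is elementary, and in the separable case one may simply take the trace.) Therefore $\iota^! \mathcal{O}_{\Spec k_0} \simeq \mathcal{O}_{\Spec k}$ as objects of $D^+_{\mathrm{qc}}(\Spec k)$, concentrated in degree zero.

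Combining these two steps gives
$$\alpha_0^! \mathcal{O}_{\Spec k_0} \simeq \alpha^! \mathcal{O}_{\Spec k},$$
and taking $\mathcal{H}^{-d}$ of both sides yields the desired isomorphism $\omega_{X/k_0} \simeq \omega_{X/k}$. The only genuine obstacle is verifying the $k$-module identification $\Hom_{k_0}(k, k_0) \simeq k$ (which is non-canonical but always exists for a finite extension); everything else is a formal manipulation of the functor $(-)^!$ already recorded in Subsection~\ref{subsection-dualizing}. One should also keep in mind that the resulting isomorphism $\omega_{X/k} \simeq \omega_{X/k_0}$ is not canonical, but this is harmless for the applications in which the lemma will be used.
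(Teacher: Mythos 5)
Your proposal is correct and follows essentially the same route as the paper: factor the structure morphism through $\Spec\,k$, use the composition rule for $(-)^!$, and reduce to the $k$-module isomorphism $\Hom_{k_0}(k,k_0)\simeq k$. Your extra remarks (that the dimension, hence the truncation index, is unchanged, and that a nonzero $k_0$-linear functional rather than the trace must be used in the inseparable case) are correct elaborations of the same argument.
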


\begin{proof}
Set $\alpha:X \overset{\beta}\to \Spec\,k \overset{\theta}\to \Spec\,k_0.$ 
By $\alpha^!=\beta^!\theta^!$, it suffices to show 
$\theta^!\MO_{\Spec\,k_0} \simeq \MO_{\Spec\,k}$. 
This follows from ${\rm Hom}_{k_0}(k, k_0) \simeq k$. 
\end{proof}

Thanks to Lemma~\ref{base-invariance}, 
we can define canonical divisors $K_X$ independent of a base field.

\begin{dfn}
Let $k$ be a field. 
If $X$ is a normal variety over $k$, 
then it is well-known that $\omega_{X/k}$ is a reflexive sheaf. 
Let $K_X$ be a divisor which satisfies 
$$\MO_X(K_X)\simeq \omega_{X/k}.$$
Such a divisor $K_X$ is called {\em canonical divisor}. 
Note that a canonical divisor is determined up to linear equivalence. 
\end{dfn}

We need the following result.

\begin{thm}\label{K-bc}
Let $k'/k$ be a field extension. 
Let $X$ be a pure dimensional Cohen--Macaulay separated scheme of finite type over $k$. 
Let $\beta:X\otimes_k k' \to X$ be the projection. 
Then, 
$$\beta^*\omega_{X/k} \simeq \omega_{X\otimes_k k'/k'}.$$
\end{thm}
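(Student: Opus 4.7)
The plan is to deduce this from flat base change for Grothendieck's upper shriek functor. Denote by $\alpha: X \to \Spec\,k$ and $\alpha': X\otimes_k k' \to \Spec\,k'$ the structure morphisms and by $u: \Spec\,k' \to \Spec\,k$ the morphism induced by the inclusion $k \hookrightarrow k'$, so that the resulting square is Cartesian. Both $u$ and its base change $\beta$ are flat, since any field extension is flat and flatness is preserved under base change.

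The first step is to observe that the Cohen--Macaulay hypothesis and the pure dimensionality of dimension $d$ are both preserved by the flat base change $\beta$. Consequently each of $\alpha^!\MO_{\Spec\,k}$ and $(\alpha')^!\MO_{\Spec\,k'}$ has cohomology concentrated in the single degree $-d$, and by definition the nonzero cohomology sheaves are $\omega_{X/k}$ and $\omega_{X\otimes_k k'/k'}$ respectively.

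The second step is to invoke the flat base change isomorphism
$$\beta^* \alpha^!(-) \;\simeq\; (\alpha')^! u^*(-)$$
available for any flat morphism $u$ and any separated finite type morphism $\alpha$; see \cite[Ch.~III, Theorem~8.7]{RD}, \cite{Conrad}, or \cite{LH}. Applying this to $\MO_{\Spec\,k}$ and using $u^*\MO_{\Spec\,k} \simeq \MO_{\Spec\,k'}$ gives $\beta^* \alpha^!\MO_{\Spec\,k} \simeq (\alpha')^!\MO_{\Spec\,k'}$. Since $\beta$ is flat, the pullback $\beta^*$ is exact and therefore commutes with the cohomology functor $\mathcal{H}^{-d}$; taking $\mathcal{H}^{-d}$ of both sides yields the desired isomorphism $\beta^*\omega_{X/k} \simeq \omega_{X\otimes_k k'/k'}$.

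The main point that requires attention is the application of flat base change when $k'/k$ is allowed to be infinitely generated, for instance $k' = k^{p^{-\infty}}$, which is the situation needed for the main theorem. This is not a genuine obstruction, because flat base change for $f^!$ holds for arbitrary flat morphisms of Noetherian schemes; if one prefers to avoid the general statement, one can write $k'$ as a filtered colimit of finitely generated subextensions $k_\lambda/k$, establish the isomorphism for each $k_\lambda$, and pass to the colimit using that the formation of dualizing sheaves of Cohen--Macaulay schemes of fixed pure dimension commutes with filtered colimits of base rings.
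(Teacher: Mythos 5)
Your argument is correct and is essentially the paper's proof: the paper simply cites \cite[Theorem 3.6.1]{Conrad} and \cite[Theorem~4.4.3]{LH}, i.e.\ exactly the flat base change isomorphism $\beta^*\alpha^!\simeq(\alpha')^!u^*$ for the twisted inverse image functor that you unfold (and which, as you note, needs no finiteness hypothesis on $k'/k$ since all schemes involved remain noetherian). Your remarks on Cohen--Macaulayness, pure dimension, and exactness of $\beta^*$ are the standard bookkeeping implicit in that citation.
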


\begin{proof}
See, for example, 
\cite[Theorem 3.6.1]{Conrad} or \cite[Theorem~4.4.3]{LH}. 
\end{proof}

Before stating a result on normalizations, let us recall some notation and basic properties. 
Let $X$ be a variety over a field $k$ and let $\nu:X^N \to X$ be the normalization. 
Let $\mathcal D$ be the closed subscheme on $X$ 
defined by the coherent ideal sheaf $\mathcal Hom_{\MO_X}(\nu_*\MO_{X^N}, \MO_X) \subset \MO_X.$ 
Set $\mathcal C:=\mathcal D \times_X X^N$. 
We call $\mathcal C$ the {\em conductor} scheme of $\nu$. 
If $X$ is $S_2$, then $\mathcal C$ is of pure codimension one. 
Under the normalization, dualizing sheaves are changed as follows. 

\begin{prop}\label{K-normalization}
Let $k$ be a field. 
Let $X$ be a Gorenstein variety over $k$. 
Set $\nu:X^N \to X$ to be the normalization and 
let $(X^N)_{\rm reg}$ be the regular locus of $X^N$. 
Then, 
$$(\omega_{X/k}\otimes_{\MO_X} \MO_X(C))|_{(X^N)_{\rm reg}}\simeq (\nu^*\omega_{X/k})|_{(X^N)_{\rm reg}}$$
where $C$ is an effective $\Z$-divisor on $X^N$ 
whose support is the same as the support 
of the conductor scheme $\mathcal C$ of $\nu$. 
\end{prop}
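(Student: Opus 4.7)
The plan is to compute $\omega_{X^N/k}$ via the sheafified $\nu^!$-functor for the finite morphism $\nu:X^N\to X$, and then to recognize the result on the regular locus as a twist of $\nu^*\omega_{X/k}$ by the conductor divisor. The Gorenstein hypothesis on $X$ enters through the invertibility of $\omega_{X/k}$, which allows this twist to be factored out of $\mathcal{H}om_{\MO_X}(\nu_*\MO_{X^N},-)$.

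First, I would restrict attention to the open subscheme $U:=(X^N)_{\rm reg}$. On $U$ the scheme $X^N$ is regular, hence Cohen--Macaulay, and $X$ is Gorenstein of the same dimension, so $\nu|_U$ is a finite morphism between Cohen--Macaulay schemes of equal dimension. In particular, $\nu_*\MO_{X^N}$ is maximal Cohen--Macaulay over $\MO_X$ in a neighbourhood of $\nu(U)$, which gives $\mathcal{E}xt^i_{\MO_X}(\nu_*\MO_{X^N},\omega_{X/k})=0$ for $i>0$ after restriction to $U$. Using the formula for $\nu^!$ at a finite morphism recalled in Subsection~\ref{subsection-dualizing}, this yields the honest sheaf-level isomorphism
$$\omega_{X^N/k}|_U\simeq \overline{\nu}^*\mathcal{H}om_{\MO_X}(\nu_*\MO_{X^N},\omega_{X/k})|_U.$$

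Second, since $\omega_{X/k}$ is invertible, I can factor it out of the Hom:
$$\mathcal{H}om_{\MO_X}(\nu_*\MO_{X^N},\omega_{X/k})\simeq \mathcal{H}om_{\MO_X}(\nu_*\MO_{X^N},\MO_X)\otimes_{\MO_X}\omega_{X/k}.$$
By the definition of the conductor recalled just before the statement, the first factor on the right is precisely the ideal sheaf cutting out the subscheme $\mathcal{D}\subset X$, and applying $\overline{\nu}^*$ produces the ideal sheaf $\mathcal{J}_{\mathcal{C}}$ of $\mathcal{C}=\mathcal{D}\times_X X^N$ in $X^N$. Combining the two displays yields
$$\omega_{X^N/k}|_U\simeq \mathcal{J}_{\mathcal{C}}|_U\otimes\nu^*\omega_{X/k}|_U.$$

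Finally, because $X$ is $S_2$ (being Gorenstein), $\mathcal{C}$ has pure codimension one in $X^N$, as recalled in the excerpt. Taking the associated Weil divisor on the normal scheme $X^N$ yields an effective $\Z$-divisor $C$ with $\Supp C=\Supp\mathcal{C}$, and restricted to the regular locus $U$ this is a Cartier divisor satisfying $\mathcal{J}_{\mathcal{C}}|_U\simeq\MO_{X^N}(-C)|_U$. Rearranging the previous isomorphism then gives the stated formula. The main technical point is the Ext vanishing in the first step, which is what allows the derived $\nu^!$ to be computed by a single $\mathcal{H}om$ sheaf; once that is in hand, the remaining steps are a formal manipulation of the conductor ideal.
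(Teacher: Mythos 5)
Your duality argument is sound in outline, and it is essentially the standard proof of this statement: the paper itself gives no argument but simply cites \cite[Proposition~2.3]{Reid}, and the route you take (compute $\omega_{X^N/k}$ via $\nu^!$ for the finite morphism, factor out the invertible $\omega_{X/k}$, identify $\mathcal Hom_{\MO_X}(\nu_*\MO_{X^N},\MO_X)$ with the conductor ideal on $X^N$) is exactly what lies behind that citation. Two points deserve tightening. First, your Ext-vanishing step is both imprecise and unnecessary: $\nu_*\MO_{X^N}$ need not be maximal Cohen--Macaulay ``in a neighbourhood of $\nu(U)$'', since a point of $X^N\setminus U$ may map to the same point of $X$ as a point of $U$, so there may be no open subset of $X$ containing $\nu(U)$ on which the claimed vanishing holds. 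But you do not need it: since $X$ is Cohen--Macaulay, $\beta^!\MO_{\Spec k}\simeq\omega_{X/k}[d]$ for $\beta:X\to\Spec k$, and because $\overline{\nu}^*$ is exact and $R\mathcal Hom$ into a sheaf has no cohomology in negative degrees, taking $\mathcal H^{-d}$ of $\nu^!\beta^!\MO_{\Spec k}$ already gives $\omega_{X^N/k}\simeq\overline{\nu}^*\mathcal Hom_{\MO_X}(\nu_*\MO_{X^N},\omega_{X/k})$ globally, with no vanishing hypothesis. Second, in the last step the equality $\mathcal J_{\mathcal C}|_U\simeq\MO_{X^N}(-C)|_U$ does not follow from pure codimension one of $\Supp\mathcal C$ alone: a codimension-one subscheme of a regular scheme can have embedded components, in which case its ideal differs from the divisorial ideal. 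You should instead note that $\mathcal J_{\mathcal C}|_U$ is invertible, either because your isomorphism exhibits it as $\omega_{X^N/k}|_U\otimes(\nu^*\omega_{X/k})^{-1}|_U$, a ratio of invertible sheaves on the regular locus, or because the conductor $\mathcal Hom_{\MO_X}(\nu_*\MO_{X^N},\MO_X)$ is a reflexive rank-one $\MO_{X^N}$-module and hence invertible on $U$; an invertible ideal sheaf on the integral scheme $U$ is $\MO_U(-C_U)$ for an effective Cartier divisor $C_U$, whose closure in $X^N$ is the required $C$, and pure codimension one of $\mathcal C$ (from $X$ being $S_2$) is what guarantees $\Supp C=\Supp\mathcal C$ after this closure, since $X^N\setminus U$ has codimension at least two. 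With these repairs your proof is complete and self-contained, which is more than the paper offers.
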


\begin{proof}
See \cite[Proposition~2.3]{Reid}. 
\end{proof}

\section{Field theoretic version of the main theorem}\label{section-field}

The main result of this section is Proposition~\ref{field-main}, 
which plays a crucial role in Section~\ref{section-main} (cf. Lemma~\ref{variety-main}). 
For this, we need the following three lemmas.

\begin{lem}\label{geom-red-perfect}
Let $k$ be a field of characteristic $p>0$. 
Let $F$ be a field containing $k$. 
If $F$ is geometrically reduced over $k$, then $k$ is purely inseparably closed in $F$. 
\end{lem}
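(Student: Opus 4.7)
The plan is to prove the contrapositive: suppose $k$ is not purely inseparably closed in $F$; I will exhibit a finite purely inseparable extension $k'/k$ of degree $p$ such that $F\otimes_k k'$ contains a nonzero nilpotent element, thereby showing that $F$ is not geometrically reduced over $k$.

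First I would pick $x\in F\setminus k$ with $x^p\in k$, which exists by assumption, and set $a:=x^p\in k$. A quick observation is that $a$ cannot be a $p$-th power in $k$: otherwise $a=b^p$ with $b\in k$, so $(x-b)^p=x^p-b^p=0$ in the field $F$, forcing $x=b\in k$, a contradiction. Consequently the polynomial $T^p-a\in k[T]$ is irreducible, and $k':=k[T]/(T^p-a)$ is a purely inseparable field extension of $k$ of degree $p$, with $a^{1/p}$ corresponding to the class of $T$.

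Next I would compute $F\otimes_k k'\simeq F[T]/(T^p-a)$, and consider the class $\xi$ of $x-T$ in this ring. Using $x^p=a$ in $F$ and working modulo $T^p-a$, we get
\[
\xi^p=(x-T)^p=x^p-T^p=a-a=0.
\]
On the other hand, $x-T$ is a polynomial of degree $1$ in $T$, hence cannot lie in the ideal $(T^p-a)$ (which consists of multiples of a degree-$p$ polynomial), so $\xi\neq 0$. Therefore $F\otimes_k k'$ contains a nonzero nilpotent and is not reduced, which contradicts the assumption that $F$ is geometrically reduced over $k$.

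I do not expect any genuine obstacle: the whole argument rests on the standard witness $x - a^{1/p}$ for non-reducedness of a purely inseparable base change. The only minor point requiring care is verifying that $\xi$ is truly nonzero in the quotient ring $F[T]/(T^p-a)$, which is immediate from the degree comparison since $T^p - a$ has degree $p\geq 2$ over $F$ (here we implicitly use that $T^p - a$ remains nonzero in $F[T]$, which is clear).
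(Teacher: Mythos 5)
Your proof is correct, but it takes a different route from the paper: the paper disposes of this lemma by citing Mac Lane's separability criterion (Bourbaki, Alg.\ V, \S 15), i.e.\ geometric reducedness is equivalent to $F$ and $k^{p^{-1}}$ being linearly disjoint over $k$, from which the statement is immediate, whereas you prove the contrapositive by hand, exhibiting the nilpotent $x-a^{1/p}$ in $F\otimes_k k(a^{1/p})$. Your argument is self-contained and elementary, and all the steps check out: the reduction to an element $x\in F\setminus k$ with $x^p\in k$ is legitimate (it is exactly the remark the paper makes right after the definition of purely inseparable closure), the observation that $a=x^p$ is not a $p$-th power in $k$ gives irreducibility of $T^p-a$ so that $k'$ is indeed a degree-$p$ purely inseparable field extension, the Frobenius identity $(x-T)^p=x^p-T^p$ holds in characteristic $p$ (including $p=2$), and the degree comparison shows $\xi\neq 0$; finally, non-reducedness of $F\otimes_k k'$ contradicts geometric reducedness since $F\otimes_k k'$ embeds into $F\otimes_k \overline{k}$ by flatness of $F$ over $k$ (a point worth making explicit if your definition of geometrically reduced refers only to the algebraic closure). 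What the citation buys the paper is brevity; what your computation buys is transparency, and it is essentially the same witness construction that underlies Lemma~\ref{p-change-field} in the paper, so the two fit together naturally.
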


\begin{proof}
The assertion follows from \cite[Ch. V, \S 15, 4. Mac Lane's separability criterion, Theorem 2(e)]{Bourbaki}. 
\end{proof}

\begin{lem}\label{fg-noetherian}
Let $k$ be a field and let $F$ be a finitely generated field over $k$. 
Let $A$ be an arbitrary noetherian $k$-algebra. 
Then, $F\otimes_k A$ is a noetherian ring. 
\end{lem}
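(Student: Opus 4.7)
The plan is to decompose the finitely generated field extension $F/k$ using a transcendence basis. Choose a transcendence basis $t_1,\dots,t_r$ of $F$ over $k$ and set $L:=k(t_1,\dots,t_r)$, so that $F/L$ is a finite (algebraic) extension. It then suffices to prove the assertion in two steps: first that $L\otimes_k A$ is noetherian, and second that $F\otimes_k A$ is noetherian given that $L\otimes_k A$ is.

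For the first step I would induct on the transcendence degree $r$, reducing to the base case of a single indeterminate. Concretely, I would first establish that for any noetherian $k$-algebra $A$ and any indeterminate $t$, the ring $k(t)\otimes_k A$ is noetherian. Since $k(t)=S^{-1}k[t]$ where $S:=k[t]\setminus\{0\}$, and localization commutes with tensor products, we get
$$k(t)\otimes_k A \;\cong\; S^{-1}(k[t]\otimes_k A) \;\cong\; S^{-1}A[t].$$
The ring $A[t]$ is noetherian by Hilbert's basis theorem, and a localization of a noetherian ring is noetherian. For the inductive step, setting $L_{r-1}:=k(t_1,\dots,t_{r-1})$, I would write
$$L\otimes_k A \;\cong\; L_{r-1}(t_r)\otimes_{L_{r-1}} (L_{r-1}\otimes_k A),$$
so that by the inductive hypothesis $L_{r-1}\otimes_k A$ is a noetherian $L_{r-1}$-algebra and then the base case (applied over $L_{r-1}$ instead of $k$) yields that $L\otimes_k A$ is noetherian.

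For the second step, I would use that $F$ is a finite-dimensional $L$-vector space, so
$$F\otimes_k A \;\cong\; F\otimes_L (L\otimes_k A)$$
is a finitely generated module over the noetherian ring $L\otimes_k A$. A module-finite algebra over a noetherian ring is noetherian (its ideals are in particular submodules, hence finitely generated), and this completes the argument.

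I do not expect any serious obstacle here; the content is entirely formal, and the only care required is in arranging the inductive decomposition on the transcendence degree and in separating the purely transcendental part from the finite algebraic part so that Hilbert's basis theorem and the module-finiteness argument can each be applied at the right stage.
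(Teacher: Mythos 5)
Your argument is correct: the base case rests on the identification $k(t)\otimes_k A\simeq S^{-1}A[t]$ (localization commutes with tensor product, then Hilbert's basis theorem), the induction over the transcendence degree is sound because that base case applies over any field, and the final passage to $F$ uses that a module-finite algebra over a noetherian ring is noetherian. However, the decomposition into a purely transcendental part followed by a finite algebraic part, and the induction it forces, are unnecessary. The paper's proof runs the same localization mechanism once, globally: choose a finitely generated $k$-subalgebra $R\subset F$ with $\Frac(R)=F$ (for instance the $k$-algebra generated by a finite set of field generators of $F$), set $S:=R\setminus\{0\}$, and observe
$$F\otimes_k A=(S^{-1}R)\otimes_k A\simeq T^{-1}(R\otimes_k A),\qquad T:=\{s\otimes 1\mid s\in S\},$$
so that $R\otimes_k A$ is noetherian by Hilbert's basis theorem and its localization $F\otimes_k A$ is noetherian as well. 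This is exactly your base-case computation, applied directly to $F$ rather than to $k(t_1,\dots,t_r)$; it makes the transcendence basis, the induction, and the module-finiteness step superfluous. Your route costs a little more bookkeeping but proves nothing more; the only thing it isolates explicitly is the (standard) fact that a finite extension of a noetherian ring is noetherian, which the one-step localization argument simply never needs.
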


\begin{proof}
We can find a finitely generated $k$-subalgebra $R\subset F$ whose fractional field is $F$. 
Set $S:=R\setminus \{0\}$ and we see $F=S^{-1}R$. 
We can check that there exists the following ring isomorphism 
$$F\otimes_k A = (S^{-1}R) \otimes_k A \simeq T^{-1}(R\otimes_k A)$$
where $T:=\{s\otimes 1\,|\, s\in S\} \subset R\otimes_k A$. 
By the Hilbert basis theorem, $R\otimes_k A$ is a noetherian ring. 
Therefore, so is $F\otimes_k A\simeq T^{-1}(R\otimes_k A)$. 
\end{proof}

The following lemma is a little bit generalization of \cite[Lemma~1.3]{Schroer}.

\begin{lem}\label{p-change-field}
Let $k$ be a field of characteristic $p>0$. 
Let $F/k$ and $k'/k$ be field extensions. 
Assume the following two conditions. 
\begin{enumerate}
\item{$k$ is purely inseparably closed in $F$.}
\item{The field extension $k'/k$ is purely inseparable and $[k':k]=p$. }
\end{enumerate}
Then, $F\otimes_k k'$ is a field. 
\end{lem}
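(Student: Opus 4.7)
My plan is to reduce the statement to a question about irreducibility of a simple polynomial over $F$, and then exploit the purely-inseparable closedness assumption to rule out a $p$-th root.

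First I would observe that, since $k'/k$ is purely inseparable of degree $p$, there exists $\alpha\in k'\setminus k$ with $a:=\alpha^p\in k$, and $k'=k(\alpha)\simeq k[t]/(t^p-a)$. In particular $a\notin k^p$, for otherwise $\alpha$ would already lie in $k$ (any $p$-th root in characteristic $p$ is unique). Tensoring up, I get the ring isomorphism
\[
F\otimes_k k' \;\simeq\; F[t]/(t^p-a).
\]
So the statement becomes: the polynomial $t^p-a\in F[t]$ generates a maximal ideal, i.e.\ it is irreducible over $F$.

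Next I would use that in characteristic $p$ we have $t^p-a=(t-\beta)^p$ in $\overline{F}[t]$ for any fixed $\beta\in\overline{F}$ with $\beta^p=a$. A standard factoriality argument then shows that $t^p-a$ is reducible over $F$ if and only if it already has a root in $F$, i.e.\ if and only if $a$ possesses a $p$-th root in $F$. So it suffices to prove: no $\beta\in F$ satisfies $\beta^p=a$.

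The key step, and the only place the hypotheses really bite, is the following. Suppose by contradiction that such $\beta\in F$ exists. Then $\beta^p=a\in k$, so by hypothesis~(1) (that $k$ is purely inseparably closed in $F$) we have $\beta\in k$. But then $a=\beta^p\in k^p$, contradicting what we observed in the first paragraph. I expect this contradiction step to be the heart of the argument; the earlier reductions are bookkeeping. (If one wanted to avoid invoking hypothesis~(1) for only a single element, one could instead phrase the contradiction as: the element $\alpha-\beta\in F\otimes_k k'$ would satisfy $(\alpha-\beta)^p=0$ while being non-zero, again ruled out in a field, but this is really the same point.) No noetherianity or finite-generation input is needed here, so Lemma~\ref{fg-noetherian} plays no role; the proof is purely field-theoretic and uses only the two hypotheses together with the Frobenius identity $(x+y)^p=x^p+y^p$.
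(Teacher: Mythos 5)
Your proof is correct and follows essentially the same route as the paper: reduce to $F\otimes_k k'\simeq F[t]/(t^p-a)$ and use hypothesis (1) to rule out a $p$-th root of $a$ in $F$. The only difference is that you spell out the (standard) justification that $t^p-a$ is irreducible over $F$ precisely when $a$ has no $p$-th root in $F$, a step the paper takes as known.
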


\begin{proof}
Fix $\alpha\in k'\setminus k$ and we see $\beta:=\alpha^p\in k$. 
By (2), we see $k'=k[\alpha]\simeq k[t]/(t^p-\beta).$ 
This implies 
$$F\otimes_k k'\simeq F[t]/(t^p-\beta).$$
It is enough to show that there is no element $\gamma\in F$ such that $\gamma^p=\beta$. 
If not, the field extensions $k\subset k[\gamma]\subset F$ violate the condition (1). 
\end{proof}

We show the main result in this section. 

\begin{prop}\label{field-main}
Let $k$ be a field of characteristic $p>0$. 
Let $F$ be a finitely generated field over $k$. 
Then, there exist sequences of field extensions 
\begin{itemize}
\item{$k=:k_1^0\subset k_1 \subset k_2^0 \subset k_2 \subset \cdots \subset k_n^0 \subset k_n \subset k^{p^{-\infty}}$, and}
\item{$F=:F_1 \subset F_2\subset \cdots \subset F_n \subset (F\otimes_k k^{p^{-\infty}})_{\red}$, }
\end{itemize}
which satisfy the following properties. 
\begin{enumerate}
\item{The field extension $k_n/k$ is a finite purely inseparable extension.}
\item{The field extension $F_n/F$ is a finite purely inseparable extension.}
\item{For every $i$, we obtain $k_i^0\subset k_i \subset F_i$ and $k_i$ is the purely inseparable closure of $k_i^0$ in $F_i$. }
\item{For every $i$, $[k_{i+1}^0:k_i]=p$. }
\item{For every $i$, $F_{i+1}\simeq F_i\otimes_{k_i}k_{i+1}^0$.}
\item{$F_n$ is geometrically reduced over $k_n$. }
\item{The canonical map 
$\theta:F_n \otimes_{k_n} k^{p^{-\infty}} \to (F\otimes_k k^{p^{-\infty}})_{\red}$ 
is bijective. }
\end{enumerate}
\end{prop}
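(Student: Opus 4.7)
My plan is to build the sequences inductively, applying Lemma~\ref{p-change-field} at each step to keep $F_{i+1} := F_i \otimes_{k_i} k_{i+1}^0$ a field, and to control termination by first identifying a common target field into which every $F_i$ embeds.

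First I would establish that $E := (F \otimes_k k^{p^{-\infty}})_{\red}$ is a finite purely inseparable extension of $F$. By Lemma~\ref{fg-noetherian}, $R := F \otimes_k k^{p^{-\infty}}$ is noetherian; picking a transcendence basis $t_1, \dots, t_d$ for $F/k$ realizes $R$ as a finite-dimensional algebra over the subfield $k^{p^{-\infty}}(t_1, \dots, t_d)$, so $R$ is Artinian. Since $\Spec k^{p^{-\infty}} \to \Spec k$ is a universal homeomorphism, so is $\Spec R \to \Spec F$; hence $R$ has a unique prime ideal and is Artinian local. Its residue field $E$ is thus a finite purely inseparable extension of $F$, and the composite $k^{p^{-\infty}} \hookrightarrow R \twoheadrightarrow E$ is injective, giving compatible embeddings $F, k^{p^{-\infty}} \hookrightarrow E$ over $k$.

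Next I would carry out the induction. Set $F_1 := F$ and $k_1^0 := k$, and let $k_1$ be the purely inseparable closure of $k$ in $F$. At stage $i$, assume $(F_i, k_i^0, k_i)$ is built with $k_i$ purely inseparably closed in $F_i$ and $F_i \hookrightarrow E$; stop with $n := i$ if $F_i$ is geometrically reduced over $k_i$. Otherwise $F_i \otimes_{k_i} k^{p^{-\infty}}$ is not reduced, hence not equal to $F_i$, which forces $k_i \subsetneq k^{p^{-\infty}}$; choose $\alpha \in k^{p^{-\infty}}$ with $\alpha^p \in k_i$ and $\alpha \notin k_i$, set $k_{i+1}^0 := k_i[\alpha]$, and define $F_{i+1} := F_i \otimes_{k_i} k_{i+1}^0$, a field by Lemma~\ref{p-change-field}. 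The compatible maps $F_i \hookrightarrow E$ and $k_{i+1}^0 \subset k^{p^{-\infty}} \hookrightarrow E$ over $k_i$ yield $F_{i+1} \hookrightarrow E$; finally let $k_{i+1}$ be the purely inseparable closure of $k_{i+1}^0$ in $F_{i+1}$. Properties (3)--(5) hold by construction.

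The main obstacle is termination. Inside $E$ one has $k_i = F_i \cap k^{p^{-\infty}}$: the inclusion $\subseteq$ is clear because $k_i/k$ is purely inseparable, and the reverse holds because any element of the intersection is purely inseparable over $k_i^0$ inside $F_i$, hence lies in the purely inseparable closure $k_i$. Therefore the chosen $\alpha$ lies outside $F_i$, so $F_{i+1} = F_i[\alpha] \subset E$ strictly contains $F_i$. Since $[E:F] < \infty$, this forces the process to stop after some finite $n$, giving properties (1), (2), and (6). For (7), termination implies $F_n \otimes_{k_n} k^{p^{-\infty}}$ is reduced and, by the same Artinian local argument applied now to $F_n/k_n$, is a field; the canonical surjection $\theta$ onto $E$ is therefore bijective.
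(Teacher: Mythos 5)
Your inductive step (using Lemma~\ref{p-change-field} to keep $F_{i+1}=F_i\otimes_{k_i}k_{i+1}^0$ a field, embedding everything into $E:=(F\otimes_k k^{p^{-\infty}})_{\red}$, and the identity $k_i=F_i\cap k^{p^{-\infty}}$ inside $E$) is fine and is essentially the paper's construction. The gap is in termination: the claim that $E$ is a \emph{finite} extension of $F$ is false whenever $k$ is imperfect. Your Artinian argument does show that $R:=F\otimes_k k^{p^{-\infty}}$ is finite-dimensional over $k^{p^{-\infty}}(t_1,\dots,t_d)$, local, with residue field $E$ purely inseparable over $F$; but finiteness over $k^{p^{-\infty}}(t_1,\dots,t_d)$ does not give finiteness over $F$: $R$ is a free $F$-module of rank $[k^{p^{-\infty}}:k]=\infty$, and $E$ contains a copy of all of $k^{p^{-\infty}}$ (the purely inseparable closure of $k$ in $E$). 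For example $F=k(x)$ gives $E=k^{p^{-\infty}}(x)$, so $[E:F]=[k^{p^{-\infty}}:k]=\infty$. Hence the bound $[E:F]<\infty$ on which your stopping argument rests does not exist.

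This is not a removable blemish: with an \emph{arbitrary} choice of $\alpha$ at each stage the process genuinely need not terminate. Take $k=\Fp(a,b,c_1,c_2,\dots)$ and $F=k(x)[y]/(y^p-ax^p-b)$, and always choose $\alpha=c_i^{1/p}$; every step is legal and $F_{i+1}=F_i(c_i^{1/p})\supsetneq F_i$, but no $F_i$ ever becomes geometrically reduced over $k_i$, because $a^{1/p},b^{1/p}\notin F_i$ and so $y-a^{1/p}x-b^{1/p}$ remains a nonzero nilpotent in $F_i\otimes_{k_i}\overline{k}$. The paper avoids this by fixing the target \emph{before} the induction: since $F\otimes_k k^{p^{-\infty}}$ is noetherian (Lemma~\ref{fg-noetherian}), its nilradical is generated by ${\rm Nil}_{F\otimes_k k_{\rm goal}}$ for some finite purely inseparable subextension $k\subset k_{\rm goal}\subset k^{p^{-\infty}}$; then $F_{\rm goal}:=(F\otimes_k k_{\rm goal})_{\red}$ is geometrically reduced over $k_{\rm goal}$ with $F_{\rm goal}\otimes_{k_{\rm goal}}k^{p^{-\infty}}\simeq E$, the elements $\alpha$ (equivalently the fields $k_{i+1}^0$) are chosen inside $k_{\rm goal}$, and termination is forced by $[k_{\rm goal}:k]<\infty$, after which one checks $F_n=F_{\rm goal}$ to get (6) and (7). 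Your proof needs this preliminary reduction, or some substitute for it; as written, properties (1), (2), (6) and (7) are unproven.
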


\begin{proof}
For a ring $R$, set ${\rm Nil}_R$ to be its nilradical. 
Consider the ring $F\otimes_{k} k^{p^{-\infty}}$. 
This ring is a noetherian ring by Lemma~\ref{fg-noetherian}. 
Thus, its nilradical ${\rm Nil}_{F\otimes_{k} k^{p^{-\infty}}}$ is finitely generated. 
Then we can find a finite purely inseparable extension 
$k_{\rm goal}/k$ with $k_{\rm goal} \subset k^{p^{-\infty}}$ 
such that 
$${\rm Nil}_{F\otimes_k k_{\rm goal}}\cdot(F\otimes_{k} k^{p^{-\infty}})={\rm Nil}_{F\otimes_{k} k^{p^{-\infty}}}.$$
Therefore, we obtain 
\begin{eqnarray*}
(F\otimes_k k_{\rm goal})_{\red}\otimes_{k_{\rm goal}} k^{p^{-\infty}} 
&=&((F\otimes_k k_{\rm goal})/{\rm Nil}_{F\otimes_k k_{\rm goal}})\otimes_{k_{\rm goal}} k^{p^{-\infty}}\\
&=& (F\otimes_k k^{p^{-\infty}})/\left({\rm Nil}_{F\otimes_k k_{\rm goal}}\cdot(F\otimes_{k} k^{p^{-\infty}})\right)\\
&=&(F\otimes_k k^{p^{-\infty}})/{\rm Nil}_{F\otimes_{k} k^{p^{-\infty}}}\\
&=&(F\otimes_k k^{p^{-\infty}})_{\red}.
\end{eqnarray*}
In particular, 
$$F_{\rm goal}:=(F \otimes_{k} k_{\rm goal})_{\red}$$ 
is geometrically reduced over $k_{\rm goal}$. 
Note that $(F\otimes_k k^{p^{-\infty}})_{\red}$ and $F_{\rm goal}$ are fields. 
We see 
$$k_{\rm goal} \subset k^{p^{-\infty}},\,\,\, F_{\rm goal} \subset (F\otimes_{k} k^{p^{-\infty}})_{\red}$$
and 
$$F_{\rm goal}\otimes_{k_{\rm goal}} k^{p^{-\infty}} \simeq (F\otimes_{k} k^{p^{-\infty}})_{\red}.$$
We see that $k_{\rm goal}$ is purely inseparably closed in $F_{\rm goal}$ by Lemma~\ref{geom-red-perfect}. 
In the following argument, we inductively construct two increasing sequences of subfields 
of $k_{\rm goal}$ and of $F_{\rm goal}$, respectively.  

Fix an integer $j\geq 1$ and we assume that we have already constructed  two sequences of fields 
$$k_1^0\subset k_1 \subset k_2^0 \subset k_2\subset \cdots \subset k_j^0\subset k_j\subset k_{\rm goal}.$$
$$F_1 \subset F_2 \subset \cdots \subset F_j \subset F_{\rm goal}$$
such that, for each $1\leq i\leq j$, the following properties $(1)_i$--$(5)_i$ hold.  
\begin{enumerate}
\item[$(1)_i$]{The field extension $k_i/k$ is a finite purely inseparable extension.}
\item[$(2)_i$]{The field extension $F_i/F$ is a finite purely inseparable extension.}
\item[$(3)_i$]{$k_i^0\subset k_i \subset F_i$ and $k_i$ is the purely inseparable closure of $k_i^0$ in $F_i$. }
\item[$(4)_i$]{$[k_{i}^0:k_{i-1}]=p$. }
\item[$(5)_i$]{$F_{i} \simeq F_{i-1}\otimes_{k_{i-1}}k_{i}^0$.}
\end{enumerate}
Assuming $k_j \neq k_{\rm goal}$, 
we construct $k_{j+1}^0, k_{j+1}$ and $F_{j+1}$ 
which satisfy 
$$k_j\subset k_{j+1}^0\subset  k_{j+1} \subset k_{\rm goal}$$
$$F_j \subset F_{j+1} \subset F_{\rm goal}$$
and the properties $(1)_{j+1}$--$(5)_{j+1}$. 
Since $k_j \neq k_{\rm goal}$, there is an intermediate field $k_j\subsetneq k_{j+1}^0 \subset k_{\rm goal}$ 
such that $[k_{j+1}^0:k_j]=p$. 
This implies $(4)_{j+1}$. 
Set $F_{j+1}:=F_j\otimes_{k_j}k_{j+1}^0$ and $(5)_{j+1}$ holds. 
Thanks to $(3)_j$ and $(4)_{j+1}$, 
we can apply Lemma~\ref{p-change-field} to $F_{j+1}\simeq F_{j}\otimes_{k_{j}}k_{j+1}^0$, 
and see that $F_{j+1}$ is a field. 
Let $k_{j+1}$ be the purely inseparable closure of $k_{j+1}^0$ in $F_{j+1}$. 
Then $(3)_{j+1}$ holds. 
We show $k_{j+1} \subset k_{\rm goal}$ and $F_{j+1} \subset F_{\rm goal}$. 
Note that these imply $(1)_{j+1}$ and $(2)_{j+1}$, respectively. 
Since $F_j$, $k_j$ and $k_{j+1}^0$ are contained in $F_{\rm goal}$, 
we obtain a natural ring homomorphism 
$$F_{j+1}=F_j\otimes_{k_j}k_{j+1}^0 \to F_{\rm goal}.$$
Since $F_{j+1}$ and $F_{\rm goal}$ are fields, this ring homomorphism is automatically injective. 
Thus, we obtain $F_{j+1} \subset F_{\rm goal}$ by replacing $F_{j+1}$ with the image of 
the injection $F_{j+1} \to F_{\rm goal}$. 
Then we obtain $k_{j+1} \subset k_{\rm goal}$ because 
$k_{j+1}^0 \subset k_{\rm goal}$ and $k_{\rm goal}$ is purely inseparably closed in $F_{\rm goal}$. 

\medskip
By the above inductive construction, 
we obtain two sequences 
$$k_1^0\subset k_1 \subset k_2^0 \subset k_2\subset \cdots \subset k_n^0\subset k_n=k_{\rm goal}$$
$$F_1 \subset F_2 \subset \cdots \subset F_n \subset F_{\rm goal}$$
which satisfy (1)--(5) in the proposition and 
the following two conditions: 
\begin{enumerate}
\item[$(6)'$]{$F_{\rm goal}$ is geometrically reduced over $k_n=k_{\rm goal}$. }
\item[$(7)'$]{$F_{\rm goal} \otimes_{k_{\rm goal}} k^{1/p^{\infty}} \simeq (F\otimes_k k^{p^{-\infty}})_{\red}$.}
\end{enumerate}
Thus, to show (6) and (7), we prove $F_n=F_{\rm goal}$. 
For this, it is enough to prove $F_n\supset F_{\rm goal}$. 
By $F_{\rm goal}=(F_1\otimes_{k_1} k_{\rm goal})_{\red}$, 
it suffices to show that $F_n \supset F_1$ and $F_n \supset k_{\rm goal}$, 
which is obvious. 
\end{proof}

\section{Main theorem: canonical divisors and purely inseparable base changes}\label{section-main}

In this section, we show the main theorem of this paper (Theorem~\ref{bc-main}). 
We start with a lemma which almost follows from Proposition~\ref{field-main}.

\begin{lem}\label{variety-main}
Let $k$ be a field of characteristic $p>0$. 
Let $X$ be a normal variety over $k$. 
Then, there exist sequences 
\begin{itemize}
\item{$k=:k_1^0\subset k_1 \subset k_2^0 \subset k_2 \subset \cdots \subset k_n^0 \subset k_n \subset k^{p^{-\infty}}$,}
\item{$X=:X_1 \leftarrow X_2\leftarrow \cdots \leftarrow X_n \leftarrow ((X\otimes_k k^{p^{-\infty}})_{\red})^N$, and }
\item{$X_i \to \Spec\,k_i\to \Spec\,k_i^0$,}
\end{itemize}
where $((X\times_k k^{p^{-\infty}})_{\red})^N$ is the normalization of $(X\otimes_k k^{1/p^{\infty}})_{\red}$ 
which satisfy the following properties. 
\begin{enumerate}
\item{The field extension $k_n/k$ is a finite purely inseparable extension.}
\item{For every $i$, $X_i$ is a normal variety over $k_i$, and 
the morphism $X_{i} \to X_{i-1}$ is a finite surjective purely inseparable morphism.}
\item{For every $i$, $k_i$ is the purely inseparable closure of $k_i^0$ in $K(X_i)$. }
\item{For every $i$, $[k_{i+1}^0:k_i]=p$. }
\item{For every $i$, 
$X_{i-1}\otimes_{k_{i-1}} k_{i}^0$ is integral and $X_{i}$ is the normalization of $X_{i-1}\otimes_{k_{i-1}} k_{i}^0$.}
\item{$X_n$ is geometrically reduced over $k_n$. }
\item{The induced morphism $((X\otimes_k k^{p^{-\infty}})_{\red})^N \to X_n\otimes_{k_n} k^{p^{-\infty}}$ 
is the normalization of $X_n\otimes_{k_n} k^{p^{-\infty}}$. }
\end{enumerate}
\end{lem}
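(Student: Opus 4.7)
The plan is to apply Proposition~\ref{field-main} to the finitely generated field $F := K(X)$ over $k$, producing the tower $k = k_1^0 \subset k_1 \subset \cdots \subset k_n$ of finite purely inseparable extensions together with the companion tower $F = F_1 \subset F_2 \subset \cdots \subset F_n$ satisfying properties (1)--(7) of that proposition, and then to realize this tower geometrically by constructing normal $k_i$-varieties $X_i$ with $K(X_i) = F_i$.

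I would build the $X_i$ inductively: set $X_1 := X$, and given $X_{i-1}$ normal over $k_{i-1}$ with function field $F_{i-1}$, define $X_i$ to be the normalization of $X_{i-1} \otimes_{k_{i-1}} k_i^0$. The first thing to check is that $X_{i-1} \otimes_{k_{i-1}} k_i^0$ is actually integral, so that the normalization is well-defined and property (5) holds. Irreducibility is immediate because the purely inseparable base change $X_{i-1} \otimes_{k_{i-1}} k_i^0 \to X_{i-1}$ is a universal homeomorphism. For reducedness, on any affine open $\Spec A \subset X_{i-1}$ the flatness of $k_i^0/k_{i-1}$ yields an injection $A \otimes_{k_{i-1}} k_i^0 \hookrightarrow F_{i-1} \otimes_{k_{i-1}} k_i^0 \simeq F_i$, and the right-hand side is a field by Lemma~\ref{p-change-field} (applicable since property (3) of Proposition~\ref{field-main} guarantees that $k_{i-1}$ is purely inseparably closed in $F_{i-1}$). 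This forces $K(X_i) = F_i$, propagating the inductive hypothesis, and property (2) then follows from finiteness of normalization of a variety together with pure inseparability of $F_i/F_{i-1}$.

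Next I would verify that $X_i$ acquires a natural $k_i$-structure. Each $\alpha \in k_i$ lies in $F_i = K(X_i)$ and satisfies $\alpha^{p^e} \in k_i^0$ for some $e$, hence is integral over $k_i^0$; since $X_i$ is normal, such elements lie in $\Gamma(X_i, \MO_{X_i})$, so the structural morphism $X_i \to \Spec k_i^0$ factors through $\Spec k_i$. Property (3) of the lemma then translates directly from the analogous property in Proposition~\ref{field-main}, while properties (1) and (4) are inherited verbatim. For property (6), the same flatness-plus-embedding trick works: on any affine open $\Spec B \subset X_n$ and any field extension $L/k_n$, the ring $B \otimes_{k_n} L$ embeds into $F_n \otimes_{k_n} L$, which is reduced because $F_n$ is geometrically reduced over $k_n$ by property (6) of Proposition~\ref{field-main}.

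For property (7), (6) implies $X_n \otimes_{k_n} k^{p^{-\infty}}$ is reduced, and the embedding argument above gives integrality with function field $F_n \otimes_{k_n} k^{p^{-\infty}} \simeq (F \otimes_k k^{p^{-\infty}})_{\red}$. Both $((X \otimes_k k^{p^{-\infty}})_{\red})^N$ and the normalization of $X_n \otimes_{k_n} k^{p^{-\infty}}$ are then normal integral schemes equipped with finite birational morphisms to $X_n \otimes_{k_n} k^{p^{-\infty}}$, so they coincide by the universal property of normalization. The main obstacle throughout is the integrality step; once Lemma~\ref{p-change-field} is in hand, the geometric construction is a faithful transcription of the field-theoretic one, so beyond matching conventions nothing more serious should arise.
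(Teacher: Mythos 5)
Your proposal is correct and follows essentially the same route as the paper: apply Proposition~\ref{field-main} to $F=K(X)$, build each $X_i$ as the normalization of $X_{i-1}\otimes_{k_{i-1}}k_i^0$ (integrality via the embedding of affine rings into the field $F_i$ from Lemma~\ref{p-change-field}), use normality to factor the structure morphism through $\Spec\,k_i$, and transfer properties (1)--(7) from the field-theoretic statement. You merely spell out a few details — the $k_i$-structure via integrality and the verifications of (6) and (7) — that the paper leaves implicit.
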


\begin{proof}
Set $F:=K(X)$. 
Then, we can apply Proposition~\ref{field-main} and 
we obtain sequences of field extensions 
\begin{itemize}
\item{$k=:k_1^0\subset k_1 \subset k_2^0 \subset k_2 \subset \cdots \subset k_n^0 \subset k_n \subset k^{p^{-\infty}}$, and }
\item{$F=:F_1 \subset F_2\subset \cdots \subset F_n \subset (F\otimes_k k^{p^{-\infty}})_{\red}$,}
\end{itemize}
which satisfy the properties (1)--(7) in Proposition~\ref{field-main}. 
We construct a sequence 
$$X=:X_1 \leftarrow X_2\leftarrow \cdots \leftarrow X_n$$
which has a morphism $X_i \to \Spec\,k_i$ for every $i$ and satisfies properties (1)--(7). 
Since (1) and (4) in Proposition~\ref{field-main} 
imply (1) and (4) in the lemma respectively, it suffices to show (2), (3), (5), (6), and (7) in the lemma. 
For this, we inductively construct a normal $k_i$-variety $X_i$ with $K(X_i)=F_i$.

Set $X=:X_1$. 
Since $X_1$ is normal, the structure morphism 
$X_1 \to \Spec\,k_1^0$ factors through $\Spec\,k_1$. 
Therefore, we can take the fiber product $X_1\otimes_{k_1} k_2^0$. 
The ring corresponding to an open affine subscheme of $X_1\otimes_{k_1} k_2^0$ is contained 
in a field $F_2=F_1\otimes_{k_1} k_2^0$. 
Thus, $X_1\otimes_{k_1} k_2^0$ is an integral scheme. 
Take the normalization $X_2$ of $X_1\otimes_{k_1} k_2^0$ and we see $K(X_2)=F_2$. 
We obtain $X_2 \to \Spec\,k_2\to \Spec\,k_2^0$ and, for $i=2$, 
properties (2), (3), (5) in the lemma follow from (2), (3), (5) in Proposition~\ref{field-main}. 
By the same argument, we can construct $X_1, X_2, X_3, \cdots, X_n$ 
which satisfy (1)--(5) and $K(X_i)=F_i$. 
Moreover, (6) and (7) in the lemma follow from $(6)$ and $(7)$ in Proposition~\ref{field-main}, respectively. 
We are done. 
\end{proof}

We show the main theorem of this paper. 

\begin{thm}\label{bc-main}
Let $k$ be a field of characteristic $p>0$. 
Let $X$ be a normal variety over $k$. 
Set $Y$ to be the normalization of $(X\otimes_{k} k^{p^{-\infty}})_{\red}$ 
and let $f:Y \to X$ be the induced affine morphism. 
Let $X_{\reg}$ and $Y_{\reg}$ be the regular loci of $X$ and $Y$, respectively. 
Then, there exists an effective $\Z$-divisor $C$ on $Y$ such that 
$$(\omega_{Y/k^{p^{-\infty}}}\otimes_{\MO_Y}\MO_Y(C))|_{Y_{\reg}} \simeq 
f^*(\omega_{X/k}|_{X_{\reg}})|_{Y_{\reg}}.$$
Moreover, $C$ can be chosen to be nonzero if one of the following conditions holds. 
\begin{enumerate}
\item[(a)]{$X$ is geometrically reduced but not geometrically normal over $k$.}
\item[(b)]{$X$ is not geometrically reduced and $k$ is algebraically closed in $K(X)$.}
\end{enumerate}
\end{thm}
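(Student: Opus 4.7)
The plan is to apply Lemma~\ref{variety-main} and iterate on dualizing sheaves the effect of the three basic operations that make up each step of the tower, whose behavior on $\omega$ is recorded in Theorem~\ref{K-bc}, Proposition~\ref{K-normalization}, and Lemma~\ref{base-invariance}.

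\medskip

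By Lemma~\ref{variety-main} there is a tower $X=X_1\leftarrow X_2\leftarrow\cdots\leftarrow X_n\leftarrow Y$ together with a field tower $k=k_1\subset k_2\subset\cdots\subset k_n\subset k^{p^{-\infty}}$, in which each step $X_{i-1}\rightsquigarrow X_i$ factors as the base change $\beta_i:X_{i-1}\otimes_{k_{i-1}}k_i^0\to X_{i-1}$, the normalization $\nu_i:X_i\to X_{i-1}\otimes_{k_{i-1}}k_i^0$, and the passage from $k_i^0$ to its purely inseparable closure $k_i$ in $K(X_i)$. I work throughout on regular loci, whose complements have codimension $\geq 2$ so the reflexive dualizing sheaves of the normal varieties involved are determined there. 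Over the preimage of $(X_{i-1})_{\reg}$ the base change $X_{i-1}\otimes_{k_{i-1}}k_i^0$ is Gorenstein, as a finite flat cover of a regular scheme by the Gorenstein $\Spec\,k_i^0$; hence Theorem~\ref{K-bc} yields $\beta_i^*\omega_{X_{i-1}/k_{i-1}}\simeq\omega_{X_{i-1}\otimes_{k_{i-1}}k_i^0/k_i^0}$, Proposition~\ref{K-normalization} produces an effective $\Z$-divisor $C_i$ on $X_i$ (supported on the conductor of $\nu_i$) with $\omega_{X_i/k_i^0}\otimes_{\MO_{X_i}}\MO_{X_i}(C_i)\simeq\nu_i^*\omega_{X_{i-1}\otimes_{k_{i-1}}k_i^0/k_i^0}$ on $(X_i)_{\reg}$, and Lemma~\ref{base-invariance} gives $\omega_{X_i/k_i}\simeq\omega_{X_i/k_i^0}$. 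Combining these three on $(X_i)_{\reg}$, and writing $g_i:X_i\to X_{i-1}$ for the induced morphism, one obtains $K_{X_i}+C_i\sim g_i^*K_{X_{i-1}}$.

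\medskip

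Telescoping these identities up the tower produces an effective divisor
\[
D_n\;:=\;C_n+g_n^*C_{n-1}+g_n^*g_{n-1}^*C_{n-2}+\cdots+(g_n\cdots g_3)^*C_2
\]
on $X_n$ with $K_{X_n}+D_n\sim g^*K_X$ on $(X_n)_{\reg}$, where $g:=g_2\circ\cdots\circ g_n:X_n\to X$. For the final step $Y\to X_n$, Lemma~\ref{variety-main}(6) ensures $X_n$ is geometrically reduced over $k_n$, so $X_n\otimes_{k_n}k^{p^{-\infty}}$ is reduced and finite type over $k^{p^{-\infty}}$, and by (7) its normalization is $Y$. Applying the same base-change plus normalization package (Theorem~\ref{K-bc} followed by Proposition~\ref{K-normalization}) to this step yields an effective $\Z$-divisor $C'$ on $Y$ with $K_Y+C'\sim h^*K_{X_n}$ on $Y_{\reg}$, where $h:Y\to X_n$. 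Setting $C:=C'+h^*D_n$ (effective because $h$ is finite, and integral since $h^*$ sends Cartier divisors on the regular loci to Cartier divisors) delivers the identity $K_Y+C=f^*K_X$ on $Y_{\reg}$.

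\medskip

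For the moreover part I argue that the constructed $C$ is forced to be nonzero in each listed case. In case (a), since $X$ is geometrically reduced, $X\otimes_k k^{p^{-\infty}}$ is reduced, and since $X$ is not geometrically normal the normalization $Y\to X\otimes_k k^{p^{-\infty}}$ has nonempty (divisorial) conductor; hence $C'\neq 0$ and $C\neq 0$. In case (b), suppose for contradiction that every $C_i$ and $C'$ vanish; then each normalization in the tower is an isomorphism in codimension one, so $X_n$ agrees in codimension one with a purely inseparable base change of $X$ (the base-field enlargements $k_i^0\subset k_i$ can be absorbed because geometric reducedness is insensitive to finite purely inseparable extensions of the base). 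Since $X_n$ is geometrically reduced over $k_n$, descent under the finite purely inseparable extension $k\subset k_n$ would then force $X$ to be geometrically reduced, contradicting the hypothesis. The main technical obstacle I anticipate is making the descent step in case (b) watertight: one must carefully control the interplay between the (trivial) conductors and the base-field enlargements $k_i^0\subset k_i$ to conclude geometric reducedness of $X$ itself, rather than merely of some intermediate finite base change.
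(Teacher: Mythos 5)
Your construction of $C$ via the tower of Lemma~\ref{variety-main}, applying Theorem~\ref{K-bc}, Proposition~\ref{K-normalization} and Lemma~\ref{base-invariance} at each stage and telescoping, and your treatment of case (a) (the conductor of the normalization of the reduced but non-normal scheme $X\otimes_k k^{p^{-\infty}}$ is nonempty and of pure codimension one since that scheme is $S_2$, by Serre's criterion) coincide with the paper's Steps~1 and~2, up to the minor point that the divisoriality of the conductor deserves the explicit $S_2$/Serre argument rather than a parenthesis.

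Case (b), however, has a genuine gap, and it is exactly the one you flag at the end. Your argument never uses the hypothesis that $k$ is algebraically closed in $K(X)$, and without that hypothesis the conclusion is false: take a nontrivial finite purely inseparable extension $k'/k$ and a smooth projective $k'$-variety $X$, viewed as a (normal, proper) $k$-variety. Then $(X\otimes_k k^{p^{-\infty}})_{\red}\simeq X\otimes_{k'}k^{p^{-\infty}}$ is already regular, so $Y=X\otimes_{k'}k^{p^{-\infty}}$, and Theorem~\ref{K-bc} together with Lemma~\ref{base-invariance} gives $\omega_{Y/k^{p^{-\infty}}}\simeq f^*\omega_{X/k}$; since $Y$ is proper and normal, no nonzero effective $C$ can work, although $X$ is not geometrically reduced over $k$. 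Consequently your key assertion that ``geometric reducedness is insensitive to finite purely inseparable extensions of the base'', invoked to descend geometric reducedness from $(X_n,k_n)$ to $(X,k)$, is false in the direction you need ($\Spec\,k'$ is geometrically reduced over $k'$ but not over $k$), and the base-field enlargements $k_i^0\subset k_i$ cannot simply be ``absorbed''. What must be proved, and what the paper proves, is that under hypothesis (b) these enlargements are all trivial: reduce to $X$ proper (geometric reducedness depends only on $K(X)$), so that algebraic closedness of $k$ in $K(X)$ gives $H^0(X,\MO_X)=k$; then, assuming all conductors vanish, each $\nu_i$ is an isomorphism, $X_{i+1}=X_i\otimes_{k_i}k_{i+1}^0$, and an induction using flat base change shows $H^0(X_i,\MO_{X_i})=k_i^0$; since $k_i\subset H^0(X_i,\MO_{X_i})$ (as $X_i$ is normal and $k_i$ is integral over $k_i^0$), this forces $k_i=k_i^0$ for every $i$. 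Only then does one get $X_n\simeq X\otimes_k k_n$, so that geometric reducedness of $X_n$ over $k_n$ literally says $X\otimes_k\overline k$ is reduced, giving the contradiction. Without this step (or some substitute that genuinely uses the algebraic closedness of $k$ in $K(X)$), your case (b) does not close.
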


\begin{proof}
We divide the proof into three steps. 

\medskip
\textbf{Step 1:} 
The assertion in the theorem holds under the condition (a). 

\begin{proof}[Proof of Step~1]
Set 
$$f:Y\overset{\nu}\to X\otimes_{k} k^{p^{-\infty}}\overset{\beta}\to X,$$
where $\nu$ is the normalization. 
Since we can omit codimension two closed subsets, 
we may assume that $X$ and $Y$ are regular. 
Then $X\otimes_{k} k^{p^{-\infty}}$ is Gorenstein and we obtain 
$$\omega_{X\otimes_{k} k^{p^{-\infty}}/k^{p^{-\infty}}} \simeq \beta^*\omega_{X/k}$$
by Theorem~\ref{K-bc}. 
Moreover, by Proposition~\ref{K-normalization}, we obtain 
$$\omega_{Y/k^{p^{-\infty}}}\otimes_{\MO_Y} \MO_Y(C)\simeq \nu^*\omega_{X\times_{k} k^{p^{-\infty}}/k^{p^{-\infty}}}$$
where $C$ is an effective $\Z$-divisor 
whose support is equal to the conductor of $\nu$. 

We show $C\neq 0$. 
It suffices to show that $X\otimes_{k} k^{p^{-\infty}}$ is not regular in codimension one. 
Assume that $X\otimes_{k} k^{p^{-\infty}}$ is regular and let us derive a contradiction. 
Since $X$ is $S_2$, so is $X\otimes_{k} k^{p^{-\infty}}$. 
Then, by Serre's criterion for normality, $X\otimes_{k} k^{p^{-\infty}}$ is normal. 
This is impossible because $X$ is not geometrically normal over $k$. 
\end{proof}

\medskip
\textbf{Step 2:} 
There exists an effective $\Z$-divisor $C$ on $Y$ such that 
$$(\omega_{Y/k^{p^{-\infty}}}\otimes_{\MO_Y}\MO_Y(C))|_{Y_{\reg}} \simeq 
f^*(\omega_{X/k}|_{X_{\reg}})|_{Y_{\reg}}.$$

\begin{proof}[Proof of Step~2]
We apply Proposition~\ref{variety-main} and obtain sequences 
\begin{itemize}
\item{$k=:k_1^0\subset k_1 \subset k_2^0 \subset k_2 \subset \cdots \subset k_n^0 \subset k_n \subset k^{p^{-\infty}}$,}
\item{$X=:X_1 \leftarrow X_2\leftarrow \cdots \leftarrow X_n \leftarrow Y$, and }
\item{$X_i \to \Spec\,k_i\to \Spec\,k_i^0$,}
\end{itemize}
which satisfies the properties (1)--(7) in Proposition~\ref{variety-main}. 

From $X_{i} \to \Spec\, k_{i}$, 
the morphism $X_{i+1} \to \Spec\, k_{i+1}$ is obtained by the following three steps. 
\begin{enumerate}
\item[$(\alpha)$]{Take the base change $X_i\otimes_{k_i} k_{i+1}^0$ where 
$X_i\otimes_{k_i} k_{i+1}^0$ is integral.}
\item[$(\beta)$]{Take the normalization $X_{i+1}$ of $X_i\otimes_{k_i} k_{i+1}^0$.}
\item[$(\gamma)$]{Let $k_{i+1}$ be the purely inseparable closure of $k_{i+1}^0$ in $K(X_{i+1})$.}
\end{enumerate}
Let $\beta_{i}:X_i\otimes_{k_i} k_{i+1}^0 \to X_i$ be the projection and 
let $\nu_{i}:X_{i+1} \to X_i\otimes_{k_i} k_{i+1}^0$ be the normalization. 
Then, we can compare $K_{X_i}$ and $K_{X_{i+1}}$  as follows. 
\begin{enumerate}
\item[$(\alpha)$]{$\omega_{X_i\otimes_{k_i} k_{i+1}^0/k_{i+1}^0}\simeq \beta_{i}^*\omega_{X_i/k_i}$ 
holds outside a codimension two closed subset (Theorem~\ref{K-bc}). }
\item[$(\beta)$]{$\omega_{X_{i+1}/k_{i+1}^0}\otimes_{\MO_{X_{i+1}}} \MO_{X_{i+1}}(C_{i})\simeq \nu_{i}^*\omega_{X_i\otimes_{k_i} k_{i+1}^0/k_{i+1}^0}$ holds outside a codimension two closed subset where 
the support of $C_{i}$ is equal to the support of the conductor 
of $\nu_{i}$ (Proposition~\ref{K-normalization}).}
\item[$(\gamma)$]{$\omega_{X_{i+1}/k_{i+1}}\simeq \omega_{X_{i+1}/k_{i+1}^0}$ (Lemma~\ref{base-invariance}).}
\end{enumerate}
Note that, outside a codimension two locus, every $X_i$ is regular 
and every $X_i\otimes_{k_i} k_{i+1}^0$ is Gorenstein. 

Therefore, we see that 
$$K_{X^n}+C'=g^*K_X$$
for some effective $\Z$-divisor $C'$ where $g:X_n \to X$. 
Note that $C'$ is the sum of the pull-backs of $C_{1}, C_{2}, \cdots, C_{n-1}$. 
Let $h:Y\to X_n$ be the induced morphism. 
By (7) in Proposition~\ref{variety-main} and the proof of the case Step~1, we see $K_Y+C''=h^*K_{X_n}$ for some effective $\Z$-divisor $C''$. 
Note that $C''$ may be zero. 
Then we obtain 
$$K_Y+C''+h^*C'=h^*(K_{X_n}+C')=h^*(g^*K_X)=f^*K_X.$$
Set $C:=C''+h^*C'$. 
\end{proof}

\medskip
\textbf{Step 3:} 
If the condition (b) holds, then we may assume that $C \neq 0$.

\begin{proof}[Proof of Step~3]
We may assume that $X$ is proper, 
because $X$ is not geometrically reduced over $k$ if and only if 
$K(X)$ is not geometrically reduced over $k$. 
Since $k$ is algebraically closed in $K(X)$, 
we obtain $H^0(X, \MO_X)=k$. 

We apply the same argument as Step~2 and use the same notation as Step~2. 
To show $C\neq 0$, we prove $C'\neq 0$. 
Assume the contrary, that is, assume $C_{i}=0$ for every $i$. 
Let us derive a contradiction. 
The equation $C_{i}=0$ implies that every $X_i\otimes_{k_i} k_{i+1}^0$ is regular in codimension one. 
Thus, every $X_i\otimes_{k_i} k_{i+1}^0$ is normal. 
Thus $\nu_{i}$ is an isomorphism and 
$X_{i+1}=X_i\otimes_{k_i} k_{i+1}^0$. 

We show $H^0(X_i, \MO_{X_i})=k_i^0$ by the induction on $i$. 
When $i=1$, this holds by $H^0(X, \MO_X)=k$. 
Fix $i\geq 1$ and assume that $H^0(X_i, \MO_{X_i})=k_i^0$ holds for $i$. 
In particular, we see 
$H^0(X_i, \MO_{X_i})=k_i=k_i^0$. 
By the flat base change theorem, 
$H^0(X_i, \MO_{X_i})=k_i$ implies $H^0(X_{i+1}, \MO_{X_{i+1}})=k_{i+1}^0$. 
Thus we see $H^0(X_i, \MO_{X_i})=k_i^0$ for every $i$. 

Since $X_{i}$ is normal, $k_i^0$ is algebraically closed in $K(X_{i})$. 
This implies $k_i^0=k_i$. 
Then we see that $X_n\simeq X \otimes_{k} k_n$ and 
$X_n$ is geometrically reduced over $k_n$. 
Therefore, $X$ is geometrically reduced over $k$, which violates our assumption. 
\end{proof}
The assertion in the theorem holds. 
\end{proof}



We give an explicit calculation for the double conic.

\begin{ex}\label{double-conic}
Let $F$ be an algebraically closed field of characteristic two. 
Set $k:=F(s, t)$ to be the purely transcendental extension of $F$ of degree two. 
Let 
$$X:=\Spec\,k[x, y]/(sx^2+ty^2+1).$$
Since $F[s, t, x, y](sx^2+ty^2+1)$ is smooth over $F$ 
by the Jacobian criterion for the smoothness, 
its localization $X$ is a regular curve. 
Set 
$$k':=k(s^{1/2})=F(s^{1/2}, t).$$
Take the base change
\begin{eqnarray*}
X\otimes_k k'
&=&\Spec\,k'[x, y]/(sx^2+ty^2+1)\\
&\simeq&\Spec\,k'[x, y]/(x^2+ty^2+1)\\
&\simeq&\Spec\,k'[x, y]/(x^2+ty^2).\\
\end{eqnarray*}
Note that $X\otimes_k k'$ is integral but not normal because 
the origin is a unique non-regular point. 
Taking the blowup at the origin, 
we see that the normalization 
$X_1$ of $X\otimes_k k'$ is written by 
$$X_1=\Spec\,k'[x, y]/(x^2+t)\simeq \Spec\,k'(t^{1/2})[y] \simeq \mathbb A^1_{k'(t^{1/2})}.$$
We set 
$k_1:=k'(t^{1/2})=F(s^{1/2}, t^{1/2}).$
Therefore, $X_1$ is geometrically reduced over $k_1$. 
\end{ex}

In Theorem~\ref{bc-main}, 
the scheme $(X\otimes_{k} k^{1/p^{\infty}})_{\red}$ 
is not changed by replacing a larger base field $k'$ 
which is purely inseparable over $k$. 

\begin{lem}\label{purely-enlarging}
Let $k$ be a field of characteristic $p>0$ and 
let $k'/k$ be a finite purely inseparable extension. 
Let $X$ be a scheme of finite type over $k'$. 
Then, the natural morphism 
$$\theta:(X\otimes_{k'} k'^{p^{-\infty}})_{\red} \to (X\otimes_k k^{p^{-\infty}})_{\red}.$$ 
is an isomorphism. 
\end{lem}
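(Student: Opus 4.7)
The plan is to reduce the lemma to the statement that the natural multiplication map
\[
\mu : k' \otimes_k k^{p^{-\infty}} \longrightarrow k^{p^{-\infty}}, \qquad a \otimes b \mapsto ab,
\]
is a surjection with nilpotent kernel. First, since $k'/k$ is a finite purely inseparable extension, one easily checks inside a fixed algebraic closure that $k^{p^{-\infty}} = k'^{p^{-\infty}}$; both coincide with the perfect closure of $k$. Writing $K := k^{p^{-\infty}}$, the tensor associativity $X \otimes_k K \simeq X \otimes_{k'} (k' \otimes_k K)$ identifies the natural morphism underlying $\theta$ (before taking reductions) with the base change, along $X \to \Spec k'$, of the closed immersion $\Spec K \hookrightarrow \Spec(k' \otimes_k K)$ induced by $\mu$.

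The main step is to establish the nilpotency of $\ker \mu$. Choose $n \geq 0$ with $(k')^{p^n} \subset k$; such an $n$ exists because $k'/k$ is finite purely inseparable. For every $a \in k'$, the element $a \otimes 1 - 1 \otimes a \in k' \otimes_k K$ lies in $\ker \mu$ and satisfies
\[
(a \otimes 1 - 1 \otimes a)^{p^n} = a^{p^n} \otimes 1 - 1 \otimes a^{p^n} = 0,
\]
since $a^{p^n} \in k$ forces $a^{p^n} \otimes 1 = 1 \otimes a^{p^n}$. Viewing $k' \otimes_k K$ as a $K$-algebra via the right factor, it is generated by the elements $a \otimes 1$ with $a$ ranging over a finite $k$-basis of $k'$, and $\ker \mu$ is generated as an ideal by the corresponding finitely many $a \otimes 1 - 1 \otimes a$. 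Hence $\ker \mu$ is a finitely generated nil ideal in the Noetherian ring $k' \otimes_k K$, and is therefore nilpotent.

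Pulling back along $X \to \Spec k'$, we obtain a closed immersion $X \otimes_{k'} K \hookrightarrow X \otimes_k K$ whose quasi-coherent ideal sheaf is the image of $\ker \mu$ and is therefore still nilpotent. A closed immersion defined by a nilpotent ideal sheaf induces an isomorphism on underlying reduced subschemes, and this isomorphism is precisely $\theta$. I do not expect a genuine obstacle; the only mildly delicate point is to confirm that the morphism $\theta$ in the statement is indeed the reduction of this closed immersion, which is immediate from the universal property of the fiber product applied to the commutative square with vertical maps $\Spec K \to \Spec k' \to \Spec k$.
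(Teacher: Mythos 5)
Your proof is correct and follows essentially the same route as the paper, whose one-line proof simply cites the fact that $X\otimes_{k'} k'^{p^{-\infty}} \to X\otimes_k k^{p^{-\infty}}$ is a bijective closed immersion; you supply the missing justification by identifying $\ker\mu$ as a nilpotent ideal of $k'\otimes_k k^{p^{-\infty}}$ and base changing.
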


\begin{proof}
Since $k'^{p^{-\infty}}=k^{p^{-\infty}}$, 
the assertion follows from the fact that 
$X\otimes_{k'} k'^{p^{-\infty}} \to X\otimes_k k^{p^{-\infty}}$ 
is a bijective closed immersion. 
\end{proof}




\section{Uniruledness of bad $K_X$-trivial fibrations}

In this section, we show Theorem~\ref{uniruled-main}. 
We recall the definition of uniruled varieties.

\begin{dfn}[Ch IV, Definition 1.1 of \cite{Kollar1}]\label{def-uniruled}
Let $k$ be a field. 
Let $X$ be a variety over $k$. 
We say $X$ is {\em uniruled} 
if there exist a variety $Y$ of $\dim Y=\dim X-1$ 
and a dominant rational map 
$$Y \times_k \mathbb P^1_k \dashrightarrow X.$$
For a separated scheme $X$ of finite type over $k$, 
$X$ is {\em uniruled} if and only if every irreducible component of $X_{\red}$ is uniruled. 
\end{dfn}

The following lemma is fundamental. 

\begin{lem}\label{bc-uniruled}
Let $k \subset k'$ be a field extension. 
Let $X$ be a normal geometrically connected variety over $k$. 
Then, $X$ is uniruled if and only if the normalization $Y$ of $(X\otimes_k k')_{\red}$ 
is uniruled. 
\end{lem}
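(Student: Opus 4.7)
The plan is to first reduce the lemma, via the birational invariance of uniruledness, to the statement that $X$ is uniruled if and only if $X \otimes_k k'$ is uniruled. Since $X$ is normal and geometrically connected, Lemma~\ref{geom-irreducible}(1) gives that $X$ is geometrically irreducible, and so $X \otimes_k k'$ is irreducible and $(X \otimes_k k')_{\red}$ is integral. The normalization $Y \to (X \otimes_k k')_{\red}$ induces an isomorphism of function fields, so $Y$ and $(X \otimes_k k')_{\red}$ are birational and hence simultaneously uniruled or not; moreover, by the convention in Definition~\ref{def-uniruled}, $(X \otimes_k k')_{\red}$ is uniruled if and only if $X \otimes_k k'$ is.

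For the forward direction, I would take a witnessing dominant rational map $\varphi: Z \times_k \mathbb{P}^1_k \dashrightarrow X$ with $\dim Z = \dim X - 1$ and base change along $k \subset k'$ to obtain the dominant rational map $(Z \otimes_k k') \times_{k'} \mathbb{P}^1_{k'} \dashrightarrow X \otimes_k k'$. Since $\mathbb{P}^1_{k'}$ is geometrically integral, the irreducible components of the source are exactly the products $W_\alpha \times_{k'} \mathbb{P}^1_{k'}$ as $W_\alpha$ runs over the irreducible components of $(Z \otimes_k k')_{\red}$; irreducibility of $X \otimes_k k'$ then forces some $W_0$ to yield a dominant rational map $W_0 \times_{k'} \mathbb{P}^1_{k'} \dashrightarrow X \otimes_k k'$ with $\dim W_0 = \dim Z = \dim X - 1$, witnessing uniruledness.

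The reverse direction will be the main obstacle, and I would attack it by a spreading-out argument. First, the uniruling data on $X \otimes_k k'$ involves only finitely many elements of $k'$, so it descends to a finitely generated intermediate field $k \subset k_1 \subset k'$, reducing to the case $k' = k_1$. Writing $k_1 = k(B)$ for an irreducible $k$-variety $B$, I would spread the uniruling $Z \times_{k_1} \mathbb{P}^1_{k_1} \dashrightarrow X \otimes_k k_1$ over a dense open $U \subset B$ to a dominant rational $U$-morphism $\mathcal{Z} \times_U \mathbb{P}^1_U \dashrightarrow X \times_k U$. Noting that $\mathcal{Z} \times_U \mathbb{P}^1_U = \mathcal{Z} \times_k \mathbb{P}^1_k$ (since $\mathbb{P}^1_U = U \times_k \mathbb{P}^1_k$) and composing with the projection $X \times_k U \to X$, I obtain a dominant rational map of $k$-schemes $\mathcal{Z} \times_k \mathbb{P}^1_k \dashrightarrow X$ with $\dim \mathcal{Z} = (\dim X - 1) + \dim B$. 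The hard part will be cutting $\dim \mathcal{Z}$ down to $\dim X - 1$ while preserving dominance: by a Bertini-style argument, intersecting $\mathcal{Z}$ with $\dim B$ generic hyperplane sections should produce an irreducible closed $k$-subvariety $\mathcal{Z}_0 \subset \mathcal{Z}$ of the required dimension such that $\mathcal{Z}_0 \times_k \mathbb{P}^1_k \dashrightarrow X$ remains dominant. This is immediate when $k$ is infinite, and for finite $k$ I would use sufficiently high-degree hypersurfaces or first enlarge by a finite extension.
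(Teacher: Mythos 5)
Your overall reduction is the same as the paper's: reduce to the claim that $X$ is uniruled iff $X\otimes_k k'$ is, and then deal with the passage to $(X\otimes_k k')_{\red}$ and its normalization. The paper disposes of the whole lemma in a few lines: it \emph{cites} \cite[Ch~IV, Proposition~1.3]{Kollar1} for the base-change invariance, uses the scheme-level definition of uniruledness to pass to $(X\otimes_k k')_{\red}$, and handles the normalization by noting (Lemma~\ref{geom-irreducible}(2)) that it is a universal homeomorphism, hence factors through an iterated Frobenius. Your treatment of the normalization via birationality is fine (indeed $(X\otimes_k k')_{\red}$ is integral by Lemma~\ref{geom-irreducible}(1), so normalization does not change the function field), and your forward direction is correct. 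The genuine divergence is that you set out to re-prove the base-change invariance that the paper simply quotes; that is legitimate, but it is exactly where your sketch has a gap.

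The gap is in the final cutting step of the reverse direction. After composing with the projection you have $\varphi:\mathcal{Z}\times_k\mathbb{P}^1_k\dashrightarrow X$ with $\dim\mathcal{Z}=\dim X-1+d$, $d=\dim B$, and you claim it is ``immediate'' (for $k$ infinite) that $d$ generic hyperplane sections of $\mathcal{Z}$ give $\mathcal{Z}_0$ with $\mathcal{Z}_0\times_k\mathbb{P}^1_k\dashrightarrow X$ still dominant. It is not: you are cutting only the $\mathcal{Z}$-factor, while dominance is a property of the map from $\mathcal{Z}\times\mathbb{P}^1$. The general fiber $F_x$ of $\varphi$ has dimension $d$, but what you need is that its image under the projection to $\mathcal{Z}$ still has dimension $d$; a priori $F_x$ could consist of $\mathbb{P}^1$-fiber directions and project to a $(d-1)$-dimensional set, which $d$ general hyperplanes miss. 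To close this you must invoke that the spread-out map $\mathcal{Z}\times_U\mathbb{P}^1_U\dashrightarrow X\times_k U$ is generically finite (the original uniruling is a dominant map of $k_1$-varieties of equal dimension), which rules out contracted $\mathbb{P}^1$-directions over a dense locus and forces the projections of general fibers to be $d$-dimensional; one also needs the usual care that the genericity condition on the hyperplanes lives over $K(X)$ while the hyperplanes are chosen over $k$. A cleaner repair, and essentially the standard argument behind \cite[Ch~IV, Proposition~1.3]{Kollar1}, is to avoid cutting altogether: restrict the spread-out family to the fiber over a general closed point $b\in U$. Then a suitable irreducible component $Z_b$ of $\mathcal{Z}_b$ is a $(\dim X-1)$-dimensional variety over the finite extension $k(b)$, and
$$Z_b\times_{k(b)}\mathbb{P}^1_{k(b)}\simeq Z_b\times_k\mathbb{P}^1_k\dashrightarrow X\otimes_k k(b)\to X$$
is dominant, which witnesses uniruledness of $X$ over $k$ directly and also removes your finite-field concern.
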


\begin{proof}
If $k$ is characteristic zero, then the assertion follows from \cite[Ch~IV, Proposition~1.3]{Kollar1}. 
Assume that $k$ is characteristic $p>0$. 
Again, by \cite[Ch~IV, Proposition~1.3]{Kollar1}, 
$X$ is uniruled if and only if $X \otimes_k k'$ is uniruled. 
By Definition~\ref{def-uniruled}, 
$X \otimes_k k'$ is uniruled if and only if $(X\otimes_k k')_{\red}$ is uniruled. 
Since the normalization $Y=(X\otimes_k k')_{\red}^N \to (X\otimes_k k')_{\red}$ is a universally homeomorphism 
by Lemma~\ref{geom-irreducible}(2), 
this factors through an iterated Frobenius 
$$F^e:(X\otimes_k k')_{\red} \to (X\otimes_k k')_{\red}.$$ 
Thus 
$(X\otimes_k k')_{\red}^N$ is uniruled if and only if $(X\otimes_k k')_{\red}$ is uniruled. 
\end{proof}

Thanks to the bend and break technique, 
we obtain a criterion for uniruledness. 

\begin{lem}\label{BB}
Let $k$ be a perfect field of characteristic $p>0$. 
Let $X$ be a projective normal variety over $k$. 
Assume that the equation $-K_X=N+E$ holds 
as Weil $\R$-divisors 
where $N$ is a nef $\R$-Cartier $\R$-divisor and 
$E$ is an effective $\R$-divisor. 
If $E\neq 0$ or $N\not\equiv 0$, then $X$ is uniruled.  
\end{lem}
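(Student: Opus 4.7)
Since $k$ is perfect, by replacing $k$ with the finite separable extension $H^0(X,\MO_X)$ we may assume $X$ is geometrically integral over $k$. Then Lemma~\ref{bc-uniruled} applied with $k':=\overline k$ reduces the problem to uniruledness of $X\otimes_k\overline k$; since $k$ is perfect and $X$ is normal, this base change is already a normal integral projective variety over $\overline k$. The decomposition $-K_X=N+E$ pulls back to one of the same form, with the pulled-back $N$ still nef and $\not\equiv 0$ (by Lemma~\ref{intersection-bc}) whenever $N\not\equiv 0$, and the pulled-back $E$ effective and nonzero whenever $E\neq 0$. Thus we may and do assume $k=\overline k$.

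The plan is now to produce a covering family of smooth curves in the regular locus $X_{\reg}$ along which $-K_X$ has strictly positive degree, and to invoke Mori's bend-and-break in positive characteristic (see \cite[Ch.~II]{Kollar1}) to extract a rational curve through a general point of $X$. To this end, fix a very ample divisor $H$ on $X$ and let $C:=H_1\cap\cdots\cap H_{d-1}$ be a general complete intersection, where $d=\dim X$. By Bertini, $C$ is a smooth irreducible curve contained in $X_{\reg}$, and as the $H_i$ vary these curves sweep out all of $X$, yielding the desired covering family.

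It then remains to verify that $-K_X\cdot C>0$ for an appropriate choice of $H$. Since $C\subset X_{\reg}$, both $N\cdot C$ and $E\cdot C$ are well-defined intersection numbers (the restriction $E|_{X_{\reg}}$ is Cartier), and both are non-negative: $N$ is nef, and $E$ is effective with $C\not\subset\Supp(E)$ by generality. If $E\neq 0$, then for $H$ sufficiently ample $C$ meets $\Supp(E)$, so $E\cdot C>0$. If $E=0$ but $N\not\equiv 0$, I would instead pick $H$ so that $N\cdot H^{d-1}>0$; such an ample $H$ exists because a nef class which pairs to zero with $H^{d-1}$ for every ample $H$ is necessarily numerically trivial, a consequence of Khovanskii--Teissier-type inequalities combined with the non-degeneracy of the numerical intersection pairing. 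In either case $-K_X\cdot C>0$, and bend-and-break produces a rational curve through a general point of $X$, proving uniruledness.

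The hardest part of the argument is adapting Mori's bend-and-break to the normal (rather than smooth) setting. Forcing the covering curves $C$ into the smooth locus via the general Bertini construction is the key device: it is what makes $-K_X\cdot C$ a genuine intersection number, what permits Mori's smooth-variety argument to be applied verbatim to $C$, and what validates $E\cdot C\geq 0$ despite $E$ being only a Weil divisor. A secondary technical point is the passage ``$N\not\equiv 0\Rightarrow N\cdot H^{d-1}>0$ for some ample $H$'', handled by the Hodge-index input indicated above.
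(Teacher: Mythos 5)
Your argument is correct and takes essentially the same route as the paper: after the same reduction to $k=\overline k$ (pullbacks of $N$ and $E$ remain nef and effective, and normality is preserved since $k$ is perfect), the paper simply invokes ``the same proof as'' \cite[Ch.~IV, Corollary~1.14]{Kollar1}, which is precisely the general complete-intersection curve in $X_{\reg}$ plus bend-and-break argument you write out. The only difference is that you make the positivity step explicit (the case split $E\neq 0$ versus $N\not\equiv 0$, using that a nef class with $N\cdot H^{d-1}=0$ is numerically trivial), which the paper leaves inside the citation.
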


\begin{proof}
We can replace $k$ by its algebraic closure $\overline k$. 
Indeed, the pullbacks of $N$ and $E$ by $X\otimes_k \overline k \to X$ 
are nef and effective, respectively. 
Thus we can assume that $k$ is algebraically closed. 
Then the assertion follows from the same proof as \cite[Ch~IV, Corollary~1.14]{Kollar1}. 
\end{proof}

The following theorem holds only in positive characteristic because 
a normal variety over a field of characteristic zero is geometrically normal. 

\begin{thm}\label{imperfect-uniruled}
Let $k$ be a field of characteristic $p>0$. 
Let $X$ be a projective normal variety over $k$ 
such that $H^0(X, \MO_X)=k$. 
Assume that $-K_X=N+E$ where $N$ is a nef $\R$-Cartier $\R$-divisor 
and $E$ is an effective $\R$-divisor. 
If $X$ is not geometrically normal over $k$, then $X$ is uniruled. 
\end{thm}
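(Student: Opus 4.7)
The plan is to reduce to the perfect-field setting via base change to $k^{p^{-\infty}}$, apply the bend-and-break criterion of Lemma~\ref{BB} to the normalized base change, and then descend uniruledness back to $X$ through Lemma~\ref{bc-uniruled}.

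First I would record the basic setup. Since $X$ is normal and proper with $H^0(X,\MO_X)=k$, and this space of global sections is precisely the integral closure of $k$ in $K(X)$, the field $k$ is algebraically closed in $K(X)$; in particular, $X$ is geometrically connected over $k$, so by Lemma~\ref{geom-irreducible}(1) even geometrically irreducible. Let $Y$ denote the normalization of $(X\otimes_k k^{p^{-\infty}})_{\red}$, with the induced morphism $f:Y\to X$. Then $Y$ is a projective normal variety over the perfect field $k^{p^{-\infty}}$: projectivity follows because base change of a projective scheme is projective and normalization is finite for reduced finite-type schemes over a Nagata base, and integrality follows from geometric irreducibility of $X$.

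Next, I would invoke Theorem~\ref{bc-main}. The hypothesis that $X$ is not geometrically normal falls into exactly one of the two listed cases of that theorem: either (a) $X$ is geometrically reduced but not geometrically normal, or (b) $X$ is not geometrically reduced, in which case the auxiliary requirement "$k$ algebraically closed in $K(X)$" for case (b) is supplied by the preceding paragraph. In either situation we obtain a \emph{nonzero} effective $\Z$-divisor $C$ on $Y$ with
$$K_Y+C=f^*K_X,$$
interpreted as an equality of Weil divisors on $Y$ (valid after restricting to $Y_{\reg}$ and extending by normality).

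Pulling back the decomposition $-K_X=N+E$ by $f$ and rearranging gives
$$-K_Y=f^*N+(C+f^*E),$$
where $f^*N$ is a nef $\R$-Cartier $\R$-divisor, being the pullback of a nef $\R$-Cartier divisor under the finite surjective morphism $f$, and $C+f^*E$ is an effective $\R$-divisor that is nonzero because $C$ is nonzero. Lemma~\ref{BB} applied to the projective normal variety $Y$ over the perfect field $k^{p^{-\infty}}$ then concludes that $Y$ is uniruled, and Lemma~\ref{bc-uniruled} (applicable thanks to geometric connectedness of $X$, taking $k'=k^{p^{-\infty}}$) transfers uniruledness back to $X$. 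The main obstacle is matching the single hypothesis "$X$ is not geometrically normal" against the case list of Theorem~\ref{bc-main} so as to guarantee $C\neq 0$; once that is set up, the numerical manipulation and the descent step are routine.
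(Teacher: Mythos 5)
Your proposal is correct and follows essentially the same route as the paper: apply Theorem~\ref{bc-main} to the normalization $Y$ of $(X\otimes_k k^{p^{-\infty}})_{\red}$ to get $K_Y+C=f^*K_X$ with $C\neq 0$ (the hypothesis $H^0(X,\MO_X)=k$ covering case (b)), conclude $Y$ is uniruled via Lemma~\ref{BB}, and descend to $X$ by Lemma~\ref{bc-uniruled}. The only quibble is that $f:Y\to X$ is not a finite morphism (the extension $k^{p^{-\infty}}/k$ is infinite), but nefness of $f^*N$ still holds by Lemma~\ref{intersection-bc}, so the argument stands.
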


\begin{proof}
Let $Y$ be the normalization of $(X\otimes_k k^{p^{-\infty}})_{\red}$ 
and let $f:Y \to X$ be the induced morphism. 
Then, by Theorem~\ref{bc-main}, 
we obtain 
$$K_Y+C=f^*K_X$$
for some nonzero effective $\Z$-divisor $C$.  
By Lemma~\ref{bc-uniruled}, it suffices to show that $Y$ is uniruled. 
Since $C\neq 0$, the assertion holds by Lemma~\ref{BB}. 
\end{proof}

We show the main theorem in this section. 

\begin{thm}\label{uniruled-main}
Let $k$ be a field of characteristic $p>0$. 
Let $\pi:X \to S$ be a projective surjective $k$-morphism of 
normal $k$-varieties such that $\pi_*\MO_X=\MO_S$. 
Assume that $-K_X=N+E$ 
where $N$ is a $\pi$-nef $\R$-Cartier $\R$-divisor and $E$ is an effective $\R$-divisor. 
If the generic fiber of $\pi$ is not normal, then $X$ is uniruled. 
\end{thm}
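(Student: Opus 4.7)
The plan is to reduce the assertion to the generic fiber $X_\eta := X \times_S \Spec K(S)$, apply Theorem~\ref{imperfect-uniruled} over the imperfect field $K(S)$, and then spread out to conclude.

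First I would verify the hypotheses of Theorem~\ref{imperfect-uniruled} on $X_\eta$. Since localization preserves normality, $X_\eta$ is a projective normal $K(S)$-variety; flat base change applied to $\pi_*\MO_X = \MO_S$ gives $H^0(X_\eta, \MO_{X_\eta}) = K(S)$; and by hypothesis $X_\eta$ is not geometrically normal over $K(S)$. I would then write $-K_{X_\eta} = N|_{X_\eta} + E|_{X_\eta}$: effectiveness of $E|_{X_\eta}$ is immediate, while $\pi$-nefness of $N$ translates to honest nefness of $N|_{X_\eta}$ because every proper curve in $X_\eta$ comes by base change from a curve contained in some fiber of $\pi$ (with intersection numbers preserved). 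The identification $K_X|_{X_\eta} \sim K_{X_\eta}$ comes from the base-change compatibility $\omega_{X/S}|_{X_\eta} \simeq \omega_{X_\eta/K(S)}$ combined with the triviality of $\pi^*\omega_{S/k}$ at the generic point of $S$. Theorem~\ref{imperfect-uniruled} then yields that $X_\eta$ is uniruled.

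Next I would spread out. Given a dominant rational map $Y_\eta \times_{K(S)} \mathbb{P}^1_{K(S)} \dashrightarrow X_\eta$ with $\dim Y_\eta = \dim X_\eta - 1$, one selects a nonempty open $U \subset S$ and a $k$-variety $\mathcal{Y}$ equipped with a morphism $\mathcal{Y} \to U$ whose generic fiber is $Y_\eta$, so that the rational map extends to a dominant rational map $\mathcal{Y} \times_U \mathbb{P}^1_U \dashrightarrow X_U$. Using $\mathcal{Y} \times_U \mathbb{P}^1_U = \mathcal{Y} \times_k \mathbb{P}^1_k$ and the denseness of $X_U$ in $X$, this produces a dominant rational map $\mathcal{Y} \times_k \mathbb{P}^1_k \dashrightarrow X$ with $\dim \mathcal{Y} = \dim Y_\eta + \dim S = \dim X - 1$, proving that $X$ is uniruled.

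The main obstacle is the careful identification $K_X|_{X_\eta} \sim K_{X_\eta}$: since $K(S)/k$ is not a finite extension, Lemma~\ref{base-invariance} does not directly apply, and one must combine the relative dualizing formula with flat base change of $\omega_{X/S}$ along $\Spec K(S) \to S$ and the triviality of $\omega_{S/k}$ at the generic point. The spreading-out step is standard but still requires promoting the rational uniruling over $K(S)$ to an honest family over an open subset of $S$.
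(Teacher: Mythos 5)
Your proposal follows the paper's proof essentially verbatim: pass to the generic fiber $X_{K(S)}$, verify $H^0(X_{K(S)},\MO_{X_{K(S)}})=K(S)$ by flat base change and the decomposition $-K_{X_{K(S)}}=N|_{X_{K(S)}}+E|_{X_{K(S)}}$, apply Theorem~\ref{imperfect-uniruled} over the imperfect field $K(S)$, and spread the resulting uniruling out over an affine open $\Spec\,R\subset S$ using $Y_R\times_R\mathbb P^1_R\simeq Y_R\times_k\mathbb P^1_k$, exactly as in the paper. One small correction: nefness of $N|_{X_{K(S)}}$ does not follow because curves in the generic fiber are base changes of curves in closed fibers (they are not); rather, any curve on $X_{K(S)}$ spreads out to a flat family of curves lying in closed fibers over an open subset of $S$, and constancy of intersection numbers in the flat family together with $\pi$-nefness gives the claim (compare the argument of Lemma~\ref{generic-Qfac}(2)).
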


\begin{proof}
Set $X_{K(S)}:=X\times_S K(S)$. 
To apply Theorem~\ref{imperfect-uniruled} for $X_{K(S)} \to \Spec\,K(S)$, 
we check that the conditions of Theorem~\ref{imperfect-uniruled} hold. 
Since $\Spec\,K(S) \to S$ is flat, we obtain 
$H^0(X_{K(S)}, \MO_{X_{K(S)}})=K(S)$. 
We obtain $-K_{X_{K(S)}}=N|_{X_{K(S)}}+E|_{X_{K(S)}}.$ 
We see that $N|_{X_{K(S)}}$ is a nef $\R$-Cartier $\R$-divisor and 
$E|_{X_{K(S)}}$ is an effective $\R$-divisor. 
Then we can apply Theorem~\ref{imperfect-uniruled} for $X_{K(S)} \to \Spec\,K(S)$, and 
we see that $X_{K(S)}$ is uniruled over $K(S)$. 
Thus we obtain a dominant rational map 
$$Y \times_{K(S)} \mathbb P^1_{K(S)} \dashrightarrow X_{K(S)}$$
where $Y$ is a $(\dim X-\dim~S-1)$-dimensional variety over $K(S)$. 
By killing denominators, we can find an open affine $k$-variety $\Spec\,R\subset S$ 
and a $(\dim X-1)$-dimensional variety $Y_{R}$ over $R$ 
which satisfies $Y_R \otimes_R K(S) \simeq Y$, equipped with a dominant rational map 
$$Y_{R} \times_{R} \mathbb P^1_{R} \dashrightarrow X_{R}$$
where $X_R:=\pi^{-1}(\Spec\,R)$. 
Since $Y_{R} \times_{R} \mathbb P^1_{R}\simeq Y_{R}\times_k \mathbb P^1_k$, 
$X$ is uniruled. 
\end{proof}

\section{RCC-ness of general fibers of Mori fiber spaces}

In this section, we show Theorem~\ref{MFS-RCC}. 
We recall the definition of RCC schemes and MRCC-fibrations. 
For basic properties of them, see \cite[Ch. IV., Sections 3 and 5]{Kollar1}.

\begin{dfn}[Ch. IV, Definition 3.2 of \cite{Kollar1}]\label{def-MRCC}
Let $k$ be a field. 
Let $X$ be a proper scheme over $k$. 
We say $X$ is {\em rationally chain connected} (for short, RCC) 
if there is a family of proper and connected algebraic curves $g:U \to Y$ whose 
geometric fibers have only rational components with cycle morphism $u:U \to X$ 
such that the canonical morphism $U \times_Y U \to X \times_k X$ 
is dominant. 
\end{dfn}

\begin{dfn}[Ch. IV, Definition 5.1 of \cite{Kollar1}]\label{def-MRCC}
Let $k$ be a field. 
Let $X$ be a proper normal variety over $k$. 
\begin{enumerate}
\item{We say $X\supset X^0 \overset{\pi}\to Z$ is an {\em RCC fibration} if 
$X^0$ is a non-empty open subset of $X$ and 
$\pi:X^0 \to Z$ is a proper surjective morphism 
such that the fibers are RCC schemes and that $\pi_*\MO_{X_0}=\MO_Z$. }
\item{We say $X\supset X^0 \to Z$ is an {\em MRCC fibration} if 
$X\supset X^0 \to Z$ is an RCC fibration and, for every RCC fibration 
$X\supset X^1 \to W$, there exists a dominant rational map 
$Z\dashrightarrow W$ such that the induced field extension $K(Z) \supset K(W)$ 
is commutative with $K(X) \supset K(Z)$ and $K(X) \supset K(W)$. }
\end{enumerate}
\end{dfn}

\begin{rem}
Let $X$ be a proper scheme over a field $k$. 
\begin{enumerate}
\item{For a field extension $k'/k$, 
$X$ is RCC if and only if so is $X \otimes_k k'$ (cf. Proposition~\ref{MRCC-bc}). }
\item{
If $k$ is an uncountable algebraically closed field, 
then $X$ is RCC if and only if every pair of closed points $x_1, x_2 \in X$ 
can be connected by a connected chain of rational curves (\cite[Ch. IV, Proposition 3.6]{Kollar1}). }
\end{enumerate}
\end{rem}

Fano varieties with $\rho(X)=1$ play a crucial role in the minimal model theory. 
The following theorem states that such varieties are rationally chain connected. 

\begin{thm}\label{Fano-RCC}
Let $k$ be a field. 
Let $X$ be a projective normal geometrically connected variety over $k$. 
Assume the following conditions. 
\begin{itemize}
\item{$X$ is $\Q$-factorial.}
\item{$-K_X$ is ample.}
\item{$\rho(X)=1$.}
\end{itemize}
Then $X$ is rationally chain connected over $k$. 
\end{thm}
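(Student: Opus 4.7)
The plan is to base-change to the perfect field $k^{p^{-\infty}}$, invoke the main theorem to preserve the Fano-of-Picard-number-one structure, and then argue that $\rho=1$ forces the MRCC fibration to be trivial.

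First, let $f:Y\to X$ be the normalization of $(X\otimes_k k^{p^{-\infty}})_{\mathrm{red}}$. Since $X$ is geometrically connected, Lemma~\ref{geom-irreducible} implies that $Y$ is a projective normal integral variety over the perfect field $k^{p^{-\infty}}$ and that $f$ is a universal homeomorphism. Rational chain connectedness is preserved in both directions along $f$ (arguing as in Lemma~\ref{bc-uniruled} via Frobenius factoring, together with Proposition~\ref{MRCC-bc}), so it suffices to prove that $Y$ is RCC. By Proposition~\ref{p-insep-picard}(3), $\rho(Y)=\rho(X)=1$; by Lemma~\ref{p-insep-qfac}, $Y$ is $\Q$-factorial. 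Theorem~\ref{bc-main} supplies an effective $\Z$-divisor $C$ with $K_Y+C=f^*K_X$, hence $-K_Y=f^*(-K_X)+C$. Since $f$ is finite surjective and $-K_X$ is ample, $f^*(-K_X)$ is ample; since $\rho(Y)=1$ and $C$ is effective, $C$ has non-negative class in the one-dimensional space $N^1(Y)_{\R}$; the sum therefore lies in the strictly positive ray, so $-K_Y$ is ample.

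Next I run the standard $\rho=1$ MRCC argument. Lemma~\ref{BB}, applicable because $k^{p^{-\infty}}$ is perfect and $-K_Y$ is ample, shows $Y$ is uniruled; hence the MRCC fibration $Y\supset Y^0\xrightarrow{\pi}Z$ (formed, if convenient, after base change to $\overline{k}$, which is harmless for RCC by Proposition~\ref{MRCC-bc}) has positive-dimensional general fibers. Suppose $\dim Z\ge 1$ for contradiction; pick an ample divisor $A_Z$ on a projective compactification of $Z$, and let $H=\overline{\pi^*A_Z}$ as a Weil divisor on $Y$, which is $\Q$-Cartier by $\Q$-factoriality. Any proper curve $B$ on $Y$ dominating $Z$ satisfies $H\cdot B>0$, so $H\not\equiv 0$. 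On the other hand, any proper curve $C$ contained in a general fiber of $\pi$ satisfies $H\cdot C=0$ together with $-K_Y\cdot C>0$ by ampleness of $-K_Y$. Writing $H\equiv\lambda K_Y$ via $\rho(Y)=1$, the identity $0=H\cdot C=\lambda K_Y\cdot C$ with $K_Y\cdot C\ne 0$ forces $\lambda=0$, so $H\equiv 0$: contradiction. Thus $\dim Z=0$ and $Y$ is RCC.

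The main obstacle is carrying out the MRCC step cleanly over the perfect but possibly non-algebraically-closed field $k^{p^{-\infty}}$: producing the MRCC fibration and exhibiting a proper curve in a general fiber with the stated intersection properties. The cleanest route is to pass to $\overline{k}$ for the geometric construction of $\pi$ and $C$ and then transfer the resulting numerical contradiction back to $Y$ via Lemma~\ref{intersection-bc}, using that $H$ and $K_Y$ descend to $k^{p^{-\infty}}$ and that intersection numbers commute with base change.
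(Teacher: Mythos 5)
Your proposal is correct and takes essentially the same route as the paper: pass to the perfect field $k^{p^{-\infty}}$ using Theorem~\ref{bc-main}, Lemma~\ref{p-insep-qfac}, and Proposition~\ref{p-insep-picard} to preserve the $\Q$-factorial, $\rho=1$, $-K$-ample setup, then show the MRCC fibration has zero-dimensional base because $\rho=1$ rules out a nontrivial $\Q$-Cartier divisor pulled back from the base. The only cosmetic differences are that the paper first normalizes to $H^0(X,\MO_X)=k$ and phrases the contradiction via a Cartier divisor on $V$ missing a closed point $v$ rather than the closure of a pullback of an ample divisor, but these are the same argument.
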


\begin{proof}
We may assume that $H^0(X, \MO_X)=k$.

We reduce the proof to the case where $k$ is perfect. 
Set ${\rm char}\,k=:p>0$. 
Let $Y$ be the normalization of $(X\otimes_k k^{p^{-\infty}})_{\red}$. 
Then, we see that $Y$ is $\Q$-factorial (Lemma~\ref{p-insep-qfac}) and 
$\rho(Y)=1$ (Lemma~\ref{p-insep-picard}). 
Moreover, by Theorem~\ref{bc-main}, 
we see 
$$K_Y+C=f^*K_X$$
for some effective divisor $C$ on $Y$. 
Since $\rho(Y)=1$, $C$ is nef. 
In particular, $-K_Y$ is ample. 
Thus, by replacing $X$ with $Y$, 
we may assume that $k$ is perfect. 

Set $Z:=X\otimes_k \overline k$. 
Note that $-K_Z$ is an ample $\Q$-Cartier divisor. 
Thus $Z$ is uniruled. 
Therefore, an MRCC fibration of $Z \supset Z^0 \to W$ satisfies $\dim W<\dim Z$. 
By Lemma~\ref{MRCC-bc}, 
an MRCC fibration 
$$X \supset X^0 \overset{\pi}\to V$$ 
also satisfies $\dim V<\dim X$. 
It suffices to show $\dim V=0$. 
Assume $\dim V>0$ and let us derive a contradiction. 
Fix a closed point $v\in V$. 
We can find an effective Cartier divisor $D_V$ on $V$ with $v\not \in D_V$ 
by shrinking $V$ if necessary. 
Take a proper curve $C_X \subset \pi^{-1}(v)$ and 
set $D_X$ to be the closure of $\pi^{-1}(D_V)$. 
Then, $D_X \cap C_X =\emptyset$ and 
this implies $D_X \cdot C_X=0$. 
Note that, since $X$ is $\Q$-factorial, $D_X$ is $\Q$-Cartier. 
This contradicts $\rho(X)=1$. 
\end{proof}

\begin{cor}\label{Fano-torsion}
Let $k$ be a field. 
Let $X$ be a projective normal variety over $k$. 
Assume the following conditions. 
\begin{enumerate}
\item{$X$ is $\Q$-factorial.}
\item{$-K_X$ is ample.}
\item{$\rho(X)=1$.}
\end{enumerate}
Then 
the torsion subgroup of ${\rm Pic}(X)$ coincides with the subgroup of numerically trivial classes.
\end{cor}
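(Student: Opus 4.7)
One direction is immediate: if $nD\sim 0$ for some $n\in\Z_{>0}$ then $n(D\cdot C)=0$ for every proper curve $C$, so $D\cdot C=0$ and $D$ is numerically trivial. For the converse, my plan is to combine Theorem~\ref{Fano-RCC} with the classical consequence that a rationally chain connected proper variety has trivial Albanese, which forces the group of numerically trivial classes to be torsion. First I would reduce to the geometrically connected case by replacing $k$ with $k':=H^0(X,\MO_X)$, which is a finite extension of $k$ since $X$ is projective and normal; none of the hypotheses nor the conclusion are affected. Theorem~\ref{Fano-RCC} then gives that $X$ is rationally chain connected over $k$.

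Now let $D$ be a numerically trivial Cartier divisor, so $\MO_X(D)\in\Pic^\tau(X)$. Standard Picard-scheme theory for a proper variety over a field yields a short exact sequence $0\to\Pic^0(X)\to\Pic^\tau(X)\to T\to 0$ with $T$ finite (it is the torsion subgroup of the finitely generated N\'eron--Severi group). Thus it suffices to prove $\Pic^0(X)$ is torsion; equivalently, that the reduced identity component $A:=(\Pic^0_{X/k})_{\red}^0$, an abelian variety over a finite extension of $k$, is trivial. By Albanese duality this amounts to showing every $k$-morphism $\varphi\colon X\to B$ to an abelian variety is constant. But any morphism $\mathbb P^1\to B$ is constant (compare $\Omega^1_B$, which is trivial, with $\Omega^1_{\mathbb P^1}$, which has negative degree), so $\varphi$ contracts every rational curve in $X$; rational chain connectedness of $X$ then forces $\varphi$ to be globally constant, giving $A=0$ as required.

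The main technical obstacle will be making the Picard/Albanese duality and the structure of $\Pic^0_{X/k}$ rigorous in the present generality, where $X$ is only normal and $\Q$-factorial and $k$ may be imperfect of positive characteristic; here $\Pic^0_{X/k}$ can fail to be reduced and the duality is not completely formal. A cleaner route is to first replace $X$ by the normalization $Y$ of $(X\otimes_k k^{p^{-\infty}})_{\red}$: Proposition~\ref{p-insep-picard} and Lemma~\ref{p-insep-qfac} preserve $\rho(Y)=1$ and $\Q$-factoriality, and Theorem~\ref{bc-main} gives $K_Y+C=f^*K_X$ with $C$ effective, so by $\rho(Y)=1$ the divisor $-K_Y$ remains ample. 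The classical Picard/Albanese arguments then apply over the perfect base $k^{p^{-\infty}}$, giving that every numerically trivial Cartier divisor on $Y$ is torsion; the conclusion descends to $X$ along the finite purely inseparable morphism $f\colon Y\to X$ via the elementary observation that if $L\in\Pic(X)$ has $f^{*}L\simeq\MO_Y$, then the projection formula $f_{*}f^{*}L\simeq L\otimes f_{*}\MO_Y$ together with passage to determinants of rank-$d$ bundles gives $L^{\otimes d}\simeq\MO_X$, so $\ker(\Pic(X)\to\Pic(Y))$ consists of elements of order dividing $d=\deg(f)$.
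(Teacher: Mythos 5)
Your route is essentially the paper's: reduce to $H^0(X,\MO_X)=k$, replace $X$ by the normalization $Y$ of $(X\otimes_k k^{p^{-\infty}})_{\red}$ so that Lemma~\ref{p-insep-qfac}, Proposition~\ref{p-insep-picard} and Theorem~\ref{bc-main} (with $\rho=1$ forcing the correction term $C$ to be nef) preserve all three hypotheses over the perfect field $k^{p^{-\infty}}$, and then deduce that numerically trivial classes are torsion from rational chain connectedness via triviality of the Albanese. The paper's own proof is exactly this, compressed into a few lines; your Picard-scheme bookkeeping ($\Pic^0\subset\Pic^\tau$ with finite quotient, properness of $\Pic^0$ for a proper geometrically normal variety, maps to abelian varieties contracting rational chains) is a correct expansion of what the paper leaves implicit, and your explicit attention to descending the conclusion back to $X$ is a point the paper glosses over.

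That said, the descent step is mis-stated and needs repair: $f\colon Y\to X$ is \emph{not} finite, because $k^{p^{-\infty}}/k$ is an infinite extension; $f$ is only integral (an inverse limit of finite morphisms), so $\deg(f)$ is undefined, $f_*\MO_Y$ is not coherent, and the determinant/projection-formula argument cannot be applied to $f$ itself. The fix is routine: a trivialization $f^*\MO_X(nD)\simeq\MO_Y$ descends to a finite level, e.g.\ to the finite purely inseparable morphism $g\colon X_m\to X$ produced by Lemma~\ref{variety-main} (or by the standard limit argument, since $\Pic(Y)$ is the colimit of the Picard groups of such finite covers). At that level you can either run your determinant argument after restricting to the big open subset of the normal variety $X$ where $g_*\MO_{X_m}$ is locally free (its complement has codimension at least two, so triviality of $\MO_X(ndD)$ extends across it), or, more simply, note that $K(X_m)/K(X)$ is purely inseparable of some finite exponent $e$, so the $e$-th absolute Frobenius of $X$ factors through $g$ and hence $\ker\bigl(\Pic(X)\to\Pic(X_m)\bigr)$ is killed by $p^{e}$. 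A further small point: after passing to $Y$ over $k^{p^{-\infty}}$ it is cleaner to base change once more to $\overline{k}$ (as the paper does) before invoking the Albanese/duality argument; triviality of a line bundle descends along this extension because $H^0$ commutes with flat base change and $H^0(Y,\MO_Y)=k^{p^{-\infty}}$.
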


\begin{proof}
We may assume that $H^0(X, \MO_X)=k$. 
By the same argument as Theorem~\ref{Fano-RCC}, 
we may assume that $k$ is a perfect field. 
Take the base change to the algebraic closure $X\otimes_k \overline k$. 
By the assumption, $X\otimes_k \overline k$ is rationally chain connected. 
Then, the Albanese variety of $X\otimes_k \overline k$ is one point. 
Therefore, $D$ is torsion. 
\end{proof}

The following lemma is useful to compare the total space and the generic fibers.  

\begin{lem}\label{generic-Qfac}
Let $k$ be a field. 
Let $\pi:X \to S$ be a proper surjective $k$-morphism 
of normal $k$-varieties such that $\pi_*\MO_X=\MO_S$. 
\begin{enumerate}
\item{If $X$ is $\Q$-factorial, then the generic fiber $X_{K(S)}$ is also $\Q$-factorial. }
\item{If $X$ is $\Q$-factorial and $\rho(X/S)=1$, then $\rho(X_{K(S)})=1$.}
\end{enumerate}
\end{lem}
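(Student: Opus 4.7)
The plan is to exploit the fact that $X_{K(S)}$ is the pro-open subscheme of $X$ obtained by localising at the generic point of $S$: a point of $X_{K(S)}$ is just a point of $X$ lying over $\eta_S$. One then has two essentially inverse operations on divisors, namely Zariski closure from $X_{K(S)}$ up to $X$ and flat pullback from $X$ down to $X_{K(S)}$, and both parts of the lemma will be deduced from this dictionary together with $\Q$-factoriality of $X$.

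For (1), let $D$ be a prime divisor on $X_{K(S)}$ with generic point $\eta_D$, and let $\overline{D}\subset X$ be the Zariski closure of $\eta_D$. A short dimension count shows that $\overline{D}$ is a prime divisor on $X$: since $\overline{D}\to S$ is dominant with generic fibre $D$ of dimension $\dim X-\dim S-1$, one has $\dim\overline{D}\geq\dim X-1$, while $\overline{D}\subsetneq X$ forces equality. By $\Q$-factoriality of $X$ some multiple $m\overline{D}$ is Cartier on $X$, and its pullback to $X_{K(S)}$ is the Cartier divisor $mD$. Thus $X_{K(S)}$ is $\Q$-factorial.

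For (2), the closure construction together with (1) shows that every invertible sheaf on $X_{K(S)}$ is the pullback of some $\Q$-Cartier divisor on $X$, so the restriction map $\Pic(X)_{\Q}\to\Pic(X_{K(S)})_{\Q}$ is surjective. The key step, which I expect to be the main obstacle, is to check that it descends to numerical equivalence: if $L\equiv_\pi 0$ on $X$, then $L|_{X_{K(S)}}\equiv 0$ on $X_{K(S)}$. Curves on $X_{K(S)}$ are not curves on $X$, so a translation is needed, and my plan is the following flat-family Euler characteristic argument. Given a proper $K(S)$-curve $C'\subset X_{K(S)}$, take its closure $\overline{C'}\subset X$; after shrinking $S$ the morphism $\overline{C'}\to S$ becomes flat, so
$$\chi(\overline{C'}_s,L|_{\overline{C'}_s})-\chi(\overline{C'}_s,\MO_{\overline{C'}_s})$$
is locally constant in $s\in S$. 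Evaluating at a closed point $s$, where $L\cdot \overline{C'}_s=0$ by $\pi$-numerical triviality, and at the generic point yields $L|_{X_{K(S)}}\cdot C'=0$. This produces a surjection on the associated Néron--Severi spaces, hence $\rho(X_{K(S)})\leq\rho(X/S)=1$; the reverse inequality follows by restricting a $\pi$-ample divisor on $X$ to $X_{K(S)}$.
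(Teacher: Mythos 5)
Your proposal is correct and follows essentially the same strategy as the paper's proof: for (1), closing a prime divisor of the generic fiber up to a prime divisor of $X$ and using $\Q$-factoriality there is just the explicit form of the paper's one-line observation that $X_{K(S)}$ is a localization of $X$; for (2), the paper also spreads a Cartier divisor and curves of $X_{K(S)}$ out over an open $S'\subset S$, closes the divisor up to a $\Q$-Cartier divisor on $X$, and uses flatness of the curve families to transfer intersection numbers between the generic fiber and a closed fiber before invoking $\rho(X/S)=1$. The only cosmetic difference is that you package this as a well-defined surjection $N^1(X/S)_\Q\twoheadrightarrow N^1(X_{K(S)})_\Q$, whereas the paper argues curve-by-curve that $D\cdot C=0$ forces $D\equiv 0$, but the underlying computation is identical.
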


\begin{proof}
(1) 
The assertion follows from the fact that 
$X_{K(S)}$ is locally obtained as a localization of $X$. 

(2) 
Fix a Cartier divisor $D$ on $X_{K(S)}$ and a curve $C$ on $X_{K(S)}$. 
Assume $D\cdot C=0$. 
It suffices to show $D\equiv 0$. 
For this, fix an arbitrary curve $B$ on $X_{K(S)}$ and we prove $D\cdot B=0$. 
We can find a non-empty open subset $S'\subset S$, 
a Cartier divisor $D_{S'}$ on $\pi^{-1}(S')$ 
and flat families of curves $C_{S'}, B_{S'} \subset \pi^{-1}(S')$ over $S'$, whose pull-back and inverse images 
to $X_{K(S)}$ are $D, C, B$, respectively. 
Fix a closed point $s\in S'$ and let $D_s, C_s, B_s$ be the pull-back and inverse images to $X_s$. 
By the flatness of $C_{S'} \to S'$, 
we obtain 
$$D_s\cdot C_s=D_{S'}\cdot C_s=D\cdot C=0.$$ 
Let $D_S$ be the closure of $D_{S'}$ in $X$. 
Since $X$ is $\Q$-factorial, 
$D_S$ is $\Q$-Cartier. 
Then $\rho(X/S)=1$ and $D_S\cdot C_s=D_s\cdot C_s=0$ imply $D_S\equiv_{\pi} 0$. 
Therefore, $D_S\cdot B_s=0$. 
By the flatness of $B_{S'} \to S'$, we see 
$$D\cdot B=D_{S}\cdot B_s=0.$$ 
This is what we want to show. 
\end{proof}

We show the main theorem in this section. 

\begin{thm}\label{MFS-RCC}
Let $k$ be a field. 
Let $\pi:X \to S$ be a projective surjective $k$-morphism of 
normal $k$-varieties such that $\pi_*\MO_X=\MO_S$. 
Assume the following conditions. 
\begin{enumerate}
\item{$X$ is $\Q$-factorial.}
\item{$-K_X$ is $\pi$-ample.}
\item{$\rho(X/S)=1$.}
\end{enumerate}
Then general fibers of $\pi$ are rationally chain connected, 
i.e. there exists a non-empty open subset $S' \subset S$ such that 
for every point $s' \in S'$, the fiber $X_{s'}$ is rationally chain connected. 
\end{thm}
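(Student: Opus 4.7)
The plan is to reduce the assertion to Theorem~\ref{Fano-RCC} applied to the generic fiber $X_{K(S)} := X \times_S \Spec\,K(S)$, and then to spread the resulting RCC structure out to a Zariski-open locus of $S$.

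First, I would check that $X_{K(S)}$ satisfies the hypotheses of Theorem~\ref{Fano-RCC}. As a localization of the normal variety $X$, the scheme $X_{K(S)}$ is itself normal, and it is projective over $K(S)$. The assumption $\pi_*\MO_X = \MO_S$ together with flat base change along $\Spec\,K(S) \to S$ yields $H^0(X_{K(S)}, \MO_{X_{K(S)}}) = K(S)$, so $K(S)$ is algebraically closed in the function field of $X_{K(S)}$; this forces $X_{K(S)}$ to be geometrically connected over $K(S)$. By Lemma~\ref{generic-Qfac}, the scheme $X_{K(S)}$ is $\Q$-factorial and has Picard number $\rho(X_{K(S)})=1$. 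Since $-K_X$ is $\pi$-ample, its restriction $-K_{X_{K(S)}}$ is ample. Hence Theorem~\ref{Fano-RCC} applies and shows that $X_{K(S)}$ is rationally chain connected over $K(S)$.

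Second, I would spread out. By Definition~\ref{def-MRCC}, RCC-ness of $X_{K(S)}$ gives a proper flat family of connected algebraic curves $g : U \to Y$ over $K(S)$ whose geometric fibers have only rational components, equipped with a cycle morphism $u : U \to X_{K(S)}$, such that the induced map $U \times_Y U \to X_{K(S)} \times_{K(S)} X_{K(S)}$ is dominant. Since all the data are finitely presented over $K(S)$, a standard spreading-out argument produces a non-empty open $S^{\circ} \subset S$ and relative data $g_{S^{\circ}} : U_{S^{\circ}} \to Y_{S^{\circ}}$, $u_{S^{\circ}} : U_{S^{\circ}} \to X \times_S S^{\circ}$ whose pullback to $\Spec\,K(S)$ is $(g, u)$. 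Shrinking $S^{\circ}$ further using generic flatness of $g_{S^{\circ}}$ and $Y_{S^{\circ}} \to S^{\circ}$, Chevalley's theorem on constructible images, and upper semicontinuity of the non-rational locus in a family of curves, I would arrange that for every $s \in S^{\circ}$: the restriction $g_s : U_s \to Y_s$ is a family of proper connected curves with only rational components; $u_s : U_s \to X_s$ is a cycle morphism; and the induced map $U_s \times_{Y_s} U_s \to X_s \times_{k(s)} X_s$ is dominant. Then each such fiber $X_s$ is RCC, which is exactly the conclusion of the theorem.

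The main obstacle is the second step: carefully arguing that the two defining properties of the covering family---rationality of the components of the fibers of $g$, and dominance of the double-family map into $X \times_S X$---both survive fiberwise on a single non-empty open $S^{\circ} \subset S$. Each of these is a constructible condition that holds over the generic point $\eta \in S$, and hence over an open neighborhood of $\eta$, but the verification requires invoking the standard limit/spreading-out formalism for finitely presented morphisms together with generic flatness.
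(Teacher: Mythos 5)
Your proposal is correct and follows essentially the same route as the paper: verify via Lemma~\ref{generic-Qfac}, flat base change (giving $H^0(X_{K(S)},\MO_{X_{K(S)}})=K(S)$, hence geometric connectedness), and $\pi$-ampleness of $-K_X$ that the generic fiber satisfies the hypotheses of Theorem~\ref{Fano-RCC}, then pass from the generic fiber to general fibers. The only difference is that your second step re-proves by a hands-on spreading-out/constructibility argument exactly what the paper delegates to the citation \cite[Ch IV, Corollary 3.5]{Kollar1}, so the details you flag as the ``main obstacle'' can simply be replaced by that reference.
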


\begin{proof}
By \cite[Ch IV, Corollary 3.5]{Kollar1}, 
it suffices to show that 
the generic fiber is rationally chain connected. 
For this, we check that the generic fiber $X_{K(S)}$ satisfies the assumptions of Theorem~\ref{Fano-RCC}. 
By Lemma~\ref{generic-Qfac}, $X_{K(S)}$ is $\Q$-factorial and $\rho(X_{K(S)})=1$. 
Since $-K_X$ is $\pi$-ample, $-K_{X_{K(S)}}$ is ample. 
\end{proof}

\section{Cone theorem for surfaces and threefolds}

In this section, we show Theorem~\ref{surface-cone} and Theorem~\ref{weak-cone}. 
We recall the definition of nef thresholds. 

\begin{dfn}
Let $k$ be a field. 
Let $X$ be a projective normal variety over $k$. 
Let $D$ be an $\R$-Cartier $\R$-divisor and 
let $H$ be an ample $\R$-Cartier $\R$-divisor. 
We define a {\em nef threshold} $a_H$ of $D$ with respect to $H$ by 
$$a_H:=\inf\{a\in\mathbb R_{\geq 0}\,|\, D+aH{\rm\,\,is\,\,nef}\}.$$
\end{dfn}

\begin{rem}\label{rem-nef-thre}
Let $k \subset k'$ be a field extension. 
Let $X$ be a projective normal variety over $k$. 
Set $f:Y \to X$ be the normalization of $(X \otimes_k k')_{\red}$. 
Let $D$ be an $\R$-Cartier $\R$-divisor and 
let $H$ be an ample $\R$-Cartier $\R$-divisor. 
Then, by Lemma~\ref{intersection-bc}, 
the nef threshold of $D$ with respect to $H$ 
is equal to the nef threshold of $f^*D$ with respect to $f^*H$. 
\end{rem}

Recall that for a $K_X$-negative extremal ray $R$ of $\overline{NE}(X)$, 
its {\em length} is defined by 
$$\min\{-K_X \cdot C\,|\, C{\rm \,\,is\,\,a\,\,curve\,\,such\,\,that}\,\,[C] \in R\}.$$
One of the main strategy to show the cone theorem 
is to find an upper bound of length of extremal rays. 
However it may be difficult to find such a bound by the following example. 

\begin{ex}\label{length-infinite}
Fix a positive integer $N \in \mathbb Z_{>0}$. 
Let $k$ be a field such that $[\overline k:k]=\infty.$ 
Let $Y:=\mathbb P_k^2$. 
Then, we can find a closed point $y\in Y$ 
such that $[k(y):k]>N$. 
Take the blowup at $y$: $f:X \to Y$ 
and let $C$ be the $f$-exceptional curve. 
We see $-K_X\cdot C>0.$ 
Note that the structure morphism $C \to \Spec\,k$ 
factors through $C \to \Spec\,k(y) \to \Spec\,k$. 
Thus the residue field of every closed point of $C$ contains $k(y)$. 
Thus, $-K_X\cdot C$ is divisible by $[k(y):k]$. 
This implies $-K_X\cdot C>N.$

To summarize, for every positive integer $N\in \mathbb Z_{>0}$, 
there exist a projective regular surface $X$ over $k$ 
and a curve $C$ on $X$ which satisfy the following properties. 
\begin{enumerate}
\item{$\mathbb R_{\geq 0}[C] \subset \overline{NE}(X)$ is an extremal ray 
of $\overline{NE}(X)$.}
\item{$-K_X\cdot C>N.$}
\item{$C^2<0$.}
\end{enumerate}
\end{ex}

Instead of the boundedness of length of extremal rays, 
we use the following lemma. 

\begin{lem}\label{lemma-cone} 
Let $k$ be a field. 
Let $X$ be a projective normal variety over $k$. 
Let $\Delta$ be an $\mathbb R$-divisor on $X$ such that $K_X+\Delta$ is $\R$-Cartier. 
Let $A$ be an ample $\R$-Cartier $\mathbb{R}$-divisor on $X$. 
For any ample $\R$-Cartier $\mathbb R$-divisor $H$, 
let $a_H$ be the nef threshold of $K_X+\Delta+\frac 1 2 A$ with respect to $H$.

Assume that there exist finitely many curves $C_1, \cdots, C_m$, 
such that for any ample $\R$-Cartier $\R$-divisor $H$ on $X$, we have that 
$(K_X+\Delta+\frac{1}{2}A+a_HH) \cdot C_i=0$ for some $i$. 
Then 
$$\overline{NE}(X)=\overline{NE}(X)_{K_X+\Delta+A\ge 0}+\sum_{i=1}^m \mathbb R_{\ge 0}[C_i].$$
\end{lem}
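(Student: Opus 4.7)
The plan is to argue by contradiction. Set
\[
F := \overline{NE}(X)_{K_X+\Delta+A\geq 0}+\sum_{i=1}^m \R_{\geq 0}[C_i],
\]
a closed convex subcone of $\overline{NE}(X)$, and assume $F \neq \overline{NE}(X)$. The strategy is to construct an ample $\R$-Cartier $\R$-divisor $H$ for which the nef threshold $a_H$ of $L := K_X+\Delta+\tfrac{1}{2}A$ satisfies $(L+a_HH)\cdot C_i>0$ for \emph{every} $i$, which directly contradicts the hypothesis.

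First I apply the separation theorem for closed convex cones in the finite-dimensional space of numerical $1$-cycles: since some $\alpha\in\overline{NE}(X)$ is not in $F$, there exists an $\R$-Cartier class $M$ with $M\cdot\beta\geq 0$ for all $\beta\in F$ (so in particular $M\cdot C_i\geq 0$) but $M\cdot\alpha<0$, whence $M$ is not nef. Fixing any ample $\R$-Cartier $\R$-divisor $H_0$, the threshold $t_0:=\inf\{t\geq 0 : M+tH_0 \text{ is nef}\}$ lies in $(0,\infty)$, and $N_0:=M+t_0 H_0$ is nef but not ample. By Kleiman's criterion there is $\gamma\in\overline{NE}(X)\setminus\{0\}$ with $N_0\cdot\gamma=0$; then $M\cdot\gamma=-t_0 H_0\cdot\gamma<0$, so $\gamma\notin F$. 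Since $\overline{NE}(X)_{K_X+\Delta+A\geq 0}\subset F$, this forces $(K_X+\Delta+A)\cdot\gamma<0$, and consequently $L\cdot\gamma<0$.

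Now set $H:=N_0+\epsilon H_0$ for small $\epsilon>0$; this is ample (nef plus ample). The point of the construction is that $H\cdot\gamma=\epsilon H_0\cdot\gamma$ is arbitrarily small, whereas $H\cdot C_i\geq t_0 H_0\cdot C_i$ is bounded below by a positive constant independent of $\epsilon$. Hence, for $\epsilon$ sufficiently small,
\[
\frac{-L\cdot\gamma}{H\cdot\gamma}\;>\;\frac{-L\cdot C_i}{H\cdot C_i} \qquad (i=1,\dots,m).
\]
Since $L+a_HH$ is nef and $H\cdot\gamma>0$, the lower bound $a_H\geq -L\cdot\gamma/H\cdot\gamma$ yields $(L+a_HH)\cdot C_i>0$ for every $i$, the desired contradiction. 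The main technical obstacle is precisely the auxiliary construction of the nef but non-ample class $N_0$: it produces a test class $\gamma$ outside $F$ on which a small perturbation $H$ has tiny positive intersection, which is what allows one to rig the nef threshold of $L$ with respect to $H$ to be dictated by $\gamma$ rather than by any of the $C_i$.
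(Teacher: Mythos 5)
Your argument is correct and is essentially the same approach as the proof of Lemma~6.2 of Cascini--Tanaka--Xu that the paper invokes here: assume the decomposition fails, separate by a class $M$ nonnegative on the right-hand cone $F$, pass to the nef-but-not-ample class $N_0=M+t_0H_0$ to produce $\gamma\in\overline{NE}(X)\setminus\{0\}$ with $N_0\cdot\gamma=0$, hence $(K_X+\Delta+A)\cdot\gamma<0$, and then take $H=N_0+\epsilon H_0$ with $\epsilon$ small so that the nef threshold of $K_X+\Delta+\frac{1}{2}A$ is governed by $\gamma$ and is strictly positive against every $C_i$, contradicting the hypothesis. The one assertion you make without justification, and which your separation step genuinely needs, is that $F$ is closed; this is true because both summands are closed subcones of $\overline{NE}(X)$, which contains no lines (an ample class is positive on $\overline{NE}(X)\setminus\{0\}$), and the sum of two closed cones $C_1, C_2$ with $C_1\cap(-C_2)=\{0\}$ is closed.
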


\begin{proof}
We can apply the same proof as \cite[Lemma~6.2]{CTX}. 
\end{proof}

We show a cone theorem for surfaces. 

\begin{thm}\label{surface-cone}
Let $k$ be a field of characteristic $p>0$. 
Let $X$ be a projective normal surface over $k$ and 
let $\Delta$ be an effective $\R$-divisor such that $K_X+\Delta$ is $\R$-Cartier. 
Let $A$ be an ample $\R$-Cartier $\R$-divisor. 
\begin{enumerate}
\item{For any ample $\R$-Cartier $\mathbb R$-divisor $H$, 
let $a_H$ be the nef threshold of $K_X+\Delta+A$ with respect to $H$. 
Then, there exist finitely many curves $C_1, \cdots, C_m$, 
such that for any ample $\R$-Cartier $\mathbb R$-divisor $H$ on $X$, we have that 
$(K_X+\Delta+A+a_HH)\cdot C_i=0$ for some $i$. }
\item{There exists a finitely many curves $C_1, \cdots, C_r$ such that 
$$\overline{NE}(X)=\overline{NE}(X)_{K_X+\Delta+A \geq 0}+\sum_{i=1}^r \mathbb R_{\geq 0}[C_i].$$}
\end{enumerate}
\end{thm}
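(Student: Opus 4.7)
The plan is to reduce part~(1) to the algebraically closed case via the main theorem (Theorem~\ref{bc-main}), where the cone theorem of \cite{T} applies, and then to derive part~(2) from part~(1) by a direct application of Lemma~\ref{lemma-cone}.

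First I would dispatch (2) assuming (1). Applying (1) with $\frac{1}{2}A$ in place of $A$ (which is again an ample $\R$-Cartier $\R$-divisor), I obtain finitely many curves $C_1,\dots,C_m$ such that for every ample $\R$-Cartier $\R$-divisor $H$ on $X$, if $a_H$ denotes the nef threshold of $K_X+\Delta+\frac{1}{2}A$ with respect to $H$, then $(K_X+\Delta+\frac{1}{2}A+a_H H)\cdot C_i=0$ for some $i$. This is precisely the hypothesis of Lemma~\ref{lemma-cone}, whose conclusion is exactly the desired cone decomposition with respect to $K_X+\Delta+A$.

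For (1), I would pass to the algebraic closure in two steps. Let $Y_0$ be the normalization of $(X\otimes_k k^{p^{-\infty}})_{\red}$ and $f_0\colon Y_0\to X$ the induced morphism; Theorem~\ref{bc-main} yields an effective $\Z$-divisor $C_0$ on $Y_0$ with $K_{Y_0}+C_0=f_0^*K_X$ (outside codimension two loci, which is harmless since we only read off intersection numbers with curves). Since $k^{p^{-\infty}}$ is perfect and the extension $\overline k/k^{p^{-\infty}}$ is separable algebraic, the base change $Y:=Y_0\otimes_{k^{p^{-\infty}}}\overline k$ remains normal, and by Theorem~\ref{K-bc} the composed morphism $f\colon Y\to X$ satisfies $K_Y+C=f^*K_X$ where $C$ is the pullback of $C_0$. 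Setting $\Delta_Y:=f^*\Delta+C$, the divisor $K_Y+\Delta_Y=f^*(K_X+\Delta)$ is $\R$-Cartier, $f^*A$ is ample, and $Y$ is a projective normal surface over the algebraically closed field $\overline k$. I would then apply the surface cone theorem from \cite{T} to $(Y,\Delta_Y)$ with ample divisor $f^*A$, obtaining finitely many curves $D_1,\dots,D_m$ on $Y$ such that for every ample $H$ on $X$ and the nef threshold $a'$ of $K_Y+\Delta_Y+f^*A$ with respect to $f^*H$, one has $(K_Y+\Delta_Y+f^*A+a'f^*H)\cdot D_j=0$ for some $j$.

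To descend, set $C_j:=f(D_j)$, which are curves on $X$ because $f$ is finite. By Remark~\ref{rem-nef-thre} (via Lemma~\ref{intersection-bc}), $a'$ agrees with the nef threshold $a_H$ of $K_X+\Delta+A$ with respect to $H$, and the projection formula $f^*L\cdot D_j=\deg(D_j\to C_j)\cdot L\cdot C_j$ converts vanishing on $Y$ to vanishing on $X$, giving $(K_X+\Delta+A+a_H H)\cdot C_j=0$ as required. The main technical obstacle I anticipate is verifying that the cited cone theorem from \cite{T} applies to $(Y,\Delta_Y)$ in the generality needed: $\Delta_Y=f^*\Delta+C$ is merely effective and $\R$-Cartier, without any singularity hypothesis on $(Y,\Delta_Y)$, and the statement must produce a \emph{uniform} finite family of curves that detects the nef thresholds for \emph{all} ample $H$ simultaneously. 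A secondary subtlety is that the separable base change from $k^{p^{-\infty}}$ to $\overline k$ can enlarge the Picard rank, so the finite family on $Y$ furnished by \cite{T} may be larger than the eventual family on $X$; however, finiteness is preserved under $f_*$, so this is only a cosmetic issue.
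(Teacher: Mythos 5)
Your proposal follows essentially the same route as the paper: (2) is deduced from (1) by applying (1) with $\frac{1}{2}A$ and invoking Lemma~\ref{lemma-cone}, and (1) is proved by passing to the normalization $Y$ of the purely inseparable (then algebraic) base change, using Theorem~\ref{bc-main} to write $K_Y+\Delta_Y=f^*(K_X+\Delta)$, applying the cone theorem of \cite{T} over $\overline k$, and descending via Remark~\ref{rem-nef-thre} and Lemma~\ref{intersection-bc} (your projection-formula variant is equivalent). The only step you skipped is the paper's preliminary reduction to $H^0(X,\MO_X)=k$ by Stein factorization, which ensures that $Y$ is irreducible, i.e.\ an honest normal surface over $\overline k$, before \cite{T} is invoked; this omission is easily repaired (reduce as in the paper, or apply \cite{T} componentwise).
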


\begin{proof}
Since (2) follows from (1) by Lemma~\ref{lemma-cone}, 
we prove (1). 
We may assume $H^0(X, \MO_X)=k$ by taking the Stein factorization. 

Let $\overline k$ be the algebraic closure of $k$. 
Let $Y$ be the normalization of $(X\otimes_k \overline{k})_{\red}$. 
Then, $Y$ is a normal variety over $\overline k$. 
Let $f:Y \to X$ be the induced morphism. 
Then, by Theorem~\ref{bc-main}, we obtain 
$$K_Y+D=f^*K_X$$
for some effective divisor $D$. 
Thus, we can find an effective $\R$-divisor $\Delta_Y$ such that  
$$K_Y+\Delta_Y=f^*(K_X+\Delta)$$
and that $K_Y+\Delta_Y$ is $\R$-Cartier. 
Thus, $(\overline k, Y, \Delta_Y, f^*A)$ satisfies the same assumptions as $(k, X, \Delta, A)$. 
Note that, if $k$ is an algebraically closed field, 
then the assertion follows from \cite[Theorem~3.13(2)]{T}. 
Then, there exist curves $C_1', \cdots, C_m'$ on $Y$ 
such that for any ample $\R$-Cartier $\mathbb R$-divisor $H'$ on $Y$, we have that 
$(K_Y+\Delta_Y+f^*A+b_{H'}H')\cdot C_i'=0$ for some $i$, 
where $b_{H'}$ is the nef threshold of $K_Y+\Delta_Y+f^*A$ 
with respect to $H'$. 
We can find curves $C_1, \cdots, C_m$ on $X$ such that $C_i' \subset f^{-1}(C_i)$. 
Take an ample $\R$-Cartier $\R$-divisor $H$ on $X$. 
We see $a_H=b_{f^*H}$ (cf. Remark~\ref{rem-nef-thre}). 
Thus we obtain 
$$(K_Y+\Delta_Y+f^*A+a_{H}f^*H)\cdot C_i'=0$$ 
for some $i$. 
By Lemma~\ref{intersection-bc}, this implies 
$$(K_{X}+\Delta+A+a_{H}H)\cdot C_i=0.$$ 
\end{proof}

By the cone theorem for surfaces (Theorem~\ref{surface-cone}), 
we obtain a cone theorem for threefolds with pseudo-effective canonical divisors. 

\begin{thm}\label{threefold-effective-cone}
Let $k$ be a field of characteristic $p>0$. 
Let $X$ be a projective normal $\Q$-factorial threefold and 
let $\Delta$ be an effective $\R$-divisor. 
Assume the following two conditions. 
\begin{itemize}
\item{$0\leq \Delta \leq 1$.}
\item{$K_X+\Delta$ is pseudo-effective}
\end{itemize}
Let $A$ be an ample $\R$-divisor.
\begin{enumerate}
\item{For any ample $\mathbb R$-divisor $H$, 
let $a_H$ be the nef threshold of $K_X+\Delta+A$ with respect to $H$. 
Then, there exist finitely many curves $C_1$,..., $C_m$, 
such that for any ample $\mathbb R$-divisor $H$ on $X$, we have that 
$(K_X+\Delta+A+a_HH)\cdot C_i=0$ for some $i$. }
\item{There exists a finitely many curves $C_1, \cdots, C_r$ such that 
$$\overline{NE}(X)=\overline{NE}(X)_{K_X+\Delta+A \geq 0}+\sum_{i=1}^r \mathbb R_{\geq 0}[C_i].$$}
\end{enumerate}
\end{thm}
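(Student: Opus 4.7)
Statement (2) follows from (1) by Lemma~\ref{lemma-cone}, so I focus on (1), following closely the reduction strategy used in Theorem~\ref{surface-cone}: pass to the algebraic closure via Theorem~\ref{bc-main} and invoke the known threefold cone theorem over an algebraically closed field of positive characteristic (cf.\ \cite{CTX}).

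After replacing $X$ by its Stein factorization, I may assume $H^0(X,\MO_X)=k$. Let $\overline{k}$ be an algebraic closure of $k$, let $Y$ be the normalization of $(X\otimes_k\overline{k})_{\red}$, and let $f:Y\to X$ be the induced morphism, which is finite and universally homeomorphic by Lemma~\ref{geom-irreducible}. By Lemma~\ref{p-insep-qfac} and Proposition~\ref{p-insep-picard}, $Y$ is a projective $\Q$-factorial normal threefold over $\overline{k}$. Theorem~\ref{bc-main} supplies an effective $\Z$-divisor $D$ on $Y$ with $K_Y+D=f^*K_X$, so setting $\Delta_Y:=f^*\Delta+D$ yields an effective $\R$-divisor with $K_Y+\Delta_Y=f^*(K_X+\Delta)$. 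Since $f$ is finite surjective, pseudo-effectivity pulls back (by Lemma~\ref{intersection-bc} applied to the proper curves on $Y$), so $K_Y+\Delta_Y$ is pseudo-effective; also $f^*A$ is ample on $Y$.

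The next step is to apply the algebraically closed version of the threefold cone theorem to the quadruple $(\overline{k},Y,\Delta_Y,f^*A)$, producing finitely many curves $C'_1,\ldots,C'_m$ on $Y$ such that for every ample $\R$-divisor $H'$ on $Y$, writing $b_{H'}$ for the nef threshold of $K_Y+\Delta_Y+f^*A$ with respect to $H'$, one has $(K_Y+\Delta_Y+f^*A+b_{H'}H')\cdot C'_i=0$ for some $i$. Setting $C_i:=f(C'_i)$, each $C_i$ is a curve on $X$ with $C'_i\subset f^{-1}(C_i)$. For any ample $\R$-Cartier $\R$-divisor $H$ on $X$, its pullback $f^*H$ is ample on $Y$, and by Remark~\ref{rem-nef-thre} the nef thresholds satisfy $b_{f^*H}=a_H$. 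Combining the identity $(K_Y+\Delta_Y+f^*A+a_Hf^*H)\cdot C'_i=0$ with Lemma~\ref{intersection-bc} translates to the required $(K_X+\Delta+A+a_HH)\cdot C_i=0$, finishing (1).

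The main obstacle is that, although $0\le\Delta\le 1$ on $X$, the pulled-back boundary $\Delta_Y=f^*\Delta+D$ can have coefficients exceeding $1$: the conductor $D$ is only controlled by integrality of its coefficients, and $f$ may ramify along components of $\Delta$. Hence an algebraically closed threefold cone theorem stated under the hypothesis $\Delta_Y\le 1$ cannot be invoked verbatim. I would resolve this either by appealing to a variant of the cone theorem in \cite{CTX} whose hypotheses only require pseudo-effectivity of $K_Y+\Delta_Y$ together with effectivity of $\Delta_Y$, or by truncating: write $\Delta_Y=\Delta_Y^{\sharp}+E$ with $0\le\Delta_Y^{\sharp}\le 1$ and $E\ge 0$, choose a small $\varepsilon>0$ so that $f^*A-\varepsilon E$ remains ample, and apply the bounded-coefficient theorem to $K_Y+\Delta_Y^{\sharp}+(f^*A-\varepsilon E)$, then reassemble the excess $E$ into the statement by linearity of intersection numbers. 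Carrying out this bookkeeping so that the single family $\{C'_i\}$ detects the threshold uniformly in $H$ is the key technical point; the descent to $X$ then proceeds exactly as in the surface case.
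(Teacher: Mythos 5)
Your reduction to the algebraic closure runs into exactly the obstacle you name at the end, and neither of your proposed fixes closes it; this is a genuine gap, not a bookkeeping issue. After writing $K_Y+\Delta_Y=f^*(K_X+\Delta)$ with $\Delta_Y=f^*\Delta+D$, the coefficients of $\Delta_Y$ can exceed $1$ (the conductor-type divisor $D$ has no coefficient bound, and $f$ can ramify along $\Delta$), and there is no statement in \cite{CTX} -- nor elsewhere -- of a threefold cone/nef-threshold theorem over $\overline{k}$ that assumes only effectivity of the boundary and pseudo-effectivity of $K_Y+\Delta_Y$; your option (a) is an appeal to a result that does not exist. Your option (b), truncating $\Delta_Y=\Delta_Y^{\sharp}+E$ and applying the bounded-coefficient theorem to $K_Y+\Delta_Y^{\sharp}+(f^*A-\varepsilon E)=K_Y+\Delta_Y+f^*A-(1+\varepsilon)E$, changes the divisor whose nef threshold is being computed: subtracting the effective divisor $(1+\varepsilon)E$ makes the divisor \emph{more} positive precisely on the dangerous curves, namely curves $C\subset\Supp E$ with $E\cdot C<0$, so the truncated divisor can be nef along curves on which $K_Y+\Delta_Y+f^*A+a_Hf^*H$ is negative. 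Those curves cannot be recovered ``by linearity''; handling them forces you to restrict to the components of $E$ and argue on surfaces, which is no longer your base-change argument.

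This is in fact how the paper proceeds, and it never base-changes the threefold at all in this theorem. Since $A$ is ample one may assume $K_X+\Delta\equiv E=\sum e_iE_i$ with $E$ effective, over the original (possibly imperfect) field $k$. Any curve $C$ on which $K_X+\Delta+A+cH$ is negative satisfies $E\cdot C<0$, hence lies in some $E_i$; apart from the finitely many curves $\Gamma_j$ contracted by the normalizations $\nu_i:E_i^N\to E_i$, such a $C$ lifts to $E_i^N$, where the identity $(1+\lambda_i)(K_X+\Delta)\equiv K_X+E_i+D_i$ and adjunction $(K_X+E_i+D_i)|_{E_i^N}=K_{E_i^N}+\Delta_{E_i^N}$ reduce the problem to the surface cone theorem (Theorem~\ref{surface-cone}), which is already available over arbitrary fields -- that is the only place where the passage to $\overline{k}$ via Theorem~\ref{bc-main} is used, one dimension down, where \cite{T} applies without coefficient restrictions beyond effectivity. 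The nef threshold $a_H$ is then shown to equal the maximum of the surface thresholds $b_{i,H}$ and the numbers $\gamma_j$ attached to the $\Gamma_j$, and the finite family consists of the $\Gamma_j$ together with the images of the curves produced by Theorem~\ref{surface-cone} on each $E_i^N$. So the pseudo-effectivity hypothesis is not a side condition to be pulled back; it is the engine that localizes all extremal negativity onto finitely many surfaces inside $X$ itself.
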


\begin{proof}
Since (2) follows from (1) by Lemma~\ref{lemma-cone}, 
we prove (1). 

Since $A$ is ample, we can assume $0\leq \Delta < 1$ and 
$K_X+\Delta\equiv E$ where $E$ is an effective $\R$-divisor. 
Set $E:=\sum_{i=1}^{s} e_iE_i$ to be the prime decomposition with $e_i>0$. 
We define $\lambda_i\in\mathbb R_{>0}$ by  
$$K_X+\Delta+\lambda_iE=K_X+E_i+D_i,$$
where $D_i$ is an effective $\R$-divisor with $E_i \not\subset\Supp D_i$. 
We obtain 
$$(1+\lambda_i)(K_X+\Delta)\equiv K_X+\Delta+\lambda_iE=K_X+E_i+D_i.$$
Set $\nu_i:E_i^N \to E_i$ to be the normalization. 
We define an effective $\R$-divisor $\Delta_{E^N_i}$ by the adjunction (cf. \cite[Proposition~4.5]{Kollar2}) 
$$(K_X+E_i+D_i)|_{E_i^N}=K_{E_i^N}+\Delta_{E^N_i}.$$
Let $\Gamma_1, \cdots, \Gamma_r$ be the curves in $X$ such that 
the support of $\bigcup_{i=1}^r \Gamma_j$ coincides with 
the dimension one part of $\Ex(\nu_1) \cup \cdots \cup \Ex(\nu_s)$. 
Let $b_{1, H}, \cdots, b_{s, H}$ be the nef thresholds of $(K_X+\Delta+A)|_{E_i^N}$ with respect to $H|_{E_i^N}$. 
We define $\gamma_j \in \R$ by $(K_X+\Delta+A+\gamma_j H)\cdot \Gamma_j=0$.

Set $\mu:=\max\{\max_{1\leq i\leq s} \{b_{i, H}\}, \max_{1\leq j\leq r} \{\gamma_j\}\}$ 
and we show that $a_H=\mu$. 
Since $K_X+\Delta+A+a_HH$ is nef, 
$(K_X+\Delta+A+a_HH)|_{E_i^N}$ is nef for every $i$ and 
$(K_X+\Delta+A+a_HH) \cdot \Gamma_j \geq 0$ for every $j$. 
Thus, we obtain $a_H \geq \mu$. 
We show the inverse inequality $a_H \leq \mu$. 
Assume that $(K_X+\Delta+A+cH)\cdot C<0$ for some $c\geq 0$ and curve $C$. 
We show that 
one of $\{(K_X+\Delta+A+cH)|_{E_i^N}\}_i$ is not nef or 
one of $\{(K_X+\Delta+A+cH) \cdot \Gamma_j\}_j$ is negative. 
Thus, we may assume that 
$(K_X+\Delta+A+cH)\cdot \Gamma_j\geq 0$ for every $j$. 
In particular, $C \neq \Gamma_j$ for every $j$.  
Since 
$$0>(K_X+\Delta+A+cH)\cdot C >(\sum_{i=1}^{s} e_iE_i) \cdot C,$$
we obtain $E_i\cdot C<0$, which implies $C \subset E_i$. 
Since $C \neq \Gamma_j$, we can take the proper transform $C' \subset E_i^N$ of $C$, 
which implies
$$0>(K_X+\Delta+A+cH)\cdot C=(K_X+\Delta+A+cH)|_{E_i}\cdot C=(K_X+\Delta+A+cH)|_{E_i^N}\cdot C'.$$
Thus $(K_X+\Delta+A+cH)|_{E_i^N}$ is not nef and we obtain $a_H \leq \mu$.

We have 
$$(K_X+\Delta+A+b_{i, H}H)|_{E_i^N} \equiv \frac{1}{1+\lambda_i}(K_{E_i^N}+\Delta_{E_i^N}+(1+\lambda_i)A+b_{i, H}(1+\lambda_i)H)|_{E_i^N}.$$
We apply Theorem~\ref{surface-cone} for $(E^N_i, \Delta_{E_i^N}; (1+\lambda_i)A|_{E_i^N}, (1+\lambda_i)H)$ and 
obtain curves $D_1^{(i)}, \cdots, D_{t_i}^{(i)}$ in $E^N_i$, 
which satisfy the property Theorem~\ref{surface-cone}(1). 

We show that 
the curves $\Gamma_1, \cdots, \Gamma_r$ and 
$\{\nu_i(D_1^{(i)}), \cdots, \nu_i(D_{t_i}^{(i)})\}_{1\leq i\leq s}$ satisfy the required properties. 
Take an ample $\R$-Cartier $\R$-divisor $H$. 
Since $a_H=\mu$, 
we obtain $a_H=b_{i, H}$ for some $i$ or $a_H=\gamma_j$ for some $j$. 
If $a_H=\gamma_j$, then there is nothing to show. 
We can assume $a_H=b_{i, H}$ for some $i$. 
Then, by the choice of $D_{\ell}^{(i)}$, 
we can find $D_{\ell}^{(i)}$ such that 
$(K_X+\Delta+A+b_{i, H}H)\cdot D_{\ell}^{(i)}=0$ 
for some $\ell$. 
We are done. 
\end{proof}

By a result of \cite{CTX}, we obtain a weak cone theorem for threefolds 
over an arbitrary field of positive characteristic. 

\begin{thm}\label{weak-cone}
Let $k$ be a field of characteristic $p>0$. 
Let $X$ be a projective normal $\Q$-factorial threefold over $k$ and 
let $\Delta$ be an effective $\R$-divisor such that $0\leq \Delta \leq 1$. 
If $K_X+\Delta$ is not nef, then there exist an ample $\Q$-divisor $A$ and 
finitely many curves $C_1, \cdots, C_m$ such that 
$K_X+\Delta+A$ is not nef and that 
$$\overline{NE}(X)=\overline{NE}(X)_{K_X+\Delta+A \geq 0}+\sum_{i=1}^m \mathbb R_{\geq 0} [C_i].$$
\end{thm}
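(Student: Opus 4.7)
The plan is to split on whether $K_X+\Delta$ is pseudo-effective, handling the pseudo-effective case by a direct appeal to Theorem~\ref{threefold-effective-cone} and the non-pseudo-effective case by a purely inseparable base change reduction to the algebraically closed setting of \cite{CTX}. In the pseudo-effective case, $K_X+\Delta$ is $\R$-Cartier because $X$ is $\Q$-factorial, so Theorem~\ref{threefold-effective-cone}(2) applies. The non-nefness hypothesis gives a curve $C_0$ with $(K_X+\Delta)\cdot C_0<0$, and any ample $\Q$-divisor $A$ of sufficiently small norm in $N^1(X)_{\R}$ will still satisfy $(K_X+\Delta+A)\cdot C_0<0$, ensuring that $K_X+\Delta+A$ is not nef and yielding the required decomposition.

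In the non-pseudo-effective case, I would let $Y$ be the normalization of $(X\otimes_k\overline{k})_{\red}$ and $f:Y\to X$ the induced universal homeomorphism. Then $Y$ is a projective $\Q$-factorial threefold over $\overline k$ (Lemma~\ref{p-insep-qfac}), and by Theorem~\ref{bc-main} there is an effective divisor $D$ on $Y$ with $K_Y+D=f^*K_X$. Setting $\Delta_Y:=D+f^*\Delta$ gives $K_Y+\Delta_Y\equiv f^*(K_X+\Delta)$, and non-pseudo-effectiveness persists on $Y$ by Proposition~\ref{p-insep-picard} and Lemma~\ref{intersection-bc}. I would then invoke the cone theorem of \cite{CTX} for $\Q$-factorial threefolds over an algebraically closed field of positive characteristic to produce curves $C_1',\ldots,C_m'$ on $Y$ and an ample $\Q$-divisor $A'$ witnessing the decomposition on $Y$, and descend to $X$ along $f$: the images $f(C_i')$ are curves on $X$, one picks an ample $\Q$-divisor $A$ on $X$ whose pullback is close enough to $A'$ using that $f^*$ is an isomorphism on numerical Picard groups (Proposition~\ref{p-insep-picard}(3)), and Lemma~\ref{intersection-bc} transports the cone decomposition from $Y$ back to $X$.

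The main obstacle will be managing the boundary $\Delta_Y$: since $f$ is purely inseparable, the coefficients of $f^*\Delta$, and hence those of $\Delta_Y$, may exceed $1$, so the hypothesis $0\le\Delta_Y\le 1$ demanded by \cite{CTX} need not hold on the nose. To bypass this one either invokes a form of the cone theorem in \cite{CTX} that allows general effective boundary on a $\Q$-factorial threefold, or runs a preliminary truncation by replacing $(Y,\Delta_Y)$ with $(Y,\min(\Delta_Y,1))$ and absorbing the extra effective divisor into the negative part of the cone. A secondary subtlety is ensuring that the ample $\Q$-divisor extracted on $Y$ really descends to a $\Q$-divisor on $X$, but this is routine given that $f^*$ induces a $\Q$-linear isomorphism on numerical classes.
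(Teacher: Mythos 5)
Your dichotomy differs from the paper's, and the branch you route through $\overline k$ has a real gap.

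The paper fixes an auxiliary ample $A_0$ with $K_X+\Delta+A_0$ not nef, and then splits on whether $K_X+\Delta+A_0+a_H H$ (the nef threshold divisor) is big for some ample $H$, not on whether $K_X+\Delta$ is pseudo-effective. When the threshold divisor is big for some $H$, a perturbation yields an ample $\Q$-divisor $A$ with $K_X+\Delta+A$ big and not nef, and the surface-reduction technique behind Theorem~\ref{threefold-effective-cone} takes over (this is your pseudo-effective branch, correctly handled; but the paper's Case~(1) covers strictly more, since $K_X+\Delta$ need not be pseudo-effective even when the threshold divisor is big, e.g.\ a Fano threefold with a divisorial contraction). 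When the threshold divisor is never big, the paper passes to $Y$, the normalization of $(X\otimes_k\overline k)_{\red}$, writes $K_Y+\Delta_Y=f^*(K_X+\Delta)$ with $\Delta_Y$ effective via Theorem~\ref{bc-main}, and then invokes \cite[Lemma~5.3]{CTX} — a length bound coming from bend-and-break along the non-big nef threshold divisor — rather than a full cone theorem on $Y$. This is precisely what sidesteps the boundary problem you run into: Lemma~5.3 only needs $\Delta_Y$ effective, not $\Delta_Y\leq 1$, because the non-bigness provides the fibration that makes bend-and-break work regardless of coefficient size.

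By contrast, you apply the full cone theorem of \cite{CTX} to $(Y,\Delta_Y)$, and you correctly flag that $\Delta_Y = D + f^*\Delta$ may have coefficients exceeding $1$, so the hypotheses fail. Neither of your proposed remedies closes this. There is no general-effective-boundary cone theorem available to cite: \cite[Lemma~5.3]{CTX} (which is what the paper actually uses) is a nef-threshold length bound, valid only in the non-big regime, not a cone theorem. The truncation idea — replacing $\Delta_Y$ by $\Delta_Y':=\min(\Delta_Y,1)$ and ``absorbing the extra effective divisor into the negative part of the cone'' — does not go through: a curve $C$ with $(K_Y+\Delta_Y+A')\cdot C<0$ may satisfy $(K_Y+\Delta_Y'+A')\cdot C\geq 0$ precisely because $(\Delta_Y-\Delta_Y')\cdot C<0$, and such $C$ lie inside $\Supp(\Delta_Y-\Delta_Y')$; but a prime divisor on a threefold can contain infinitely many negative curves, so these are not automatically accounted for by finitely many rays. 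The missing idea is exactly the paper's big/not-big split: it guarantees that in the branch where you base change to $\overline k$ the threshold divisor is not big, which is the hypothesis under which an unbounded boundary is harmless.
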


\begin{proof}
Fix an ample $\Q$-divisor $A_0$ such that $K_X+\Delta+A_0$ is not nef. 
Set $a_H$ to be the nef threshold of $K_X+\Delta+A_0$ with respect to $H$. 
There are the following two cases. 
\begin{enumerate}
\item{There exists an ample $\R$-divisor $H$ such that 
$K_X+\Delta+A_0+a_HH$ is big.}
\item{For every ample $\R$-divisor $H$, $K_X+\Delta+A_0+a_HH$ is not big.}
\end{enumerate}

(1) Assume that there exists an ample $\R$-divisor $H$ such that 
$K_X+\Delta+A_0+a_HH$ is big. 
Then, $K_X+\Delta+A_0+(a_H-\epsilon)H$ is also big but not nef 
for a small positive real number $\epsilon>0$. 
Then, by perturbing $A_0+(a_H-\epsilon)H$, we can find an ample $\Q$-divisor $A$ 
such that $K_X+\Delta+A$ is big but not nef. 
Then, the assertion follows from Theorem~\ref{threefold-effective-cone}. 

(2) 
In this case, we set $A:=A_0$ and we show that 
$A$ satisfies the required properties. 
For every ample $\R$-divisor $H$, $K_X+\Delta+A+a_HH$ is not big. 
We may assume that $H^0(X, \MO_X)=k$. 
Set $Y$ to be the normalization of $(X \times_k \overline k)_{\red}$ 
where $\overline k$ is the algebraic closure of $k$. 
By Theorem~\ref{bc-main}, we can write $K_{Y}+D=f^*K_X$ 
for some effective divisor $D$ 
where $f:Y \to X$ is the induced morphism. 
Then, we obtain $K_Y+\Delta_Y=f^*(K_X+\Delta)$ 
for some effective $\R$-divisor $\Delta_Y$. 
Then, by \cite[Lemma~5.3]{CTX}, 
we can find a curve $C'$ on $Y$ such that 
$$(K_Y+\Delta_Y+f^*(A+a_HH))\cdot C'=0$$
and that 
$$f^*A \cdot C' \leq -(K_Y+\Delta_Y) \cdot C' \leq 2\dim X.$$ 
Thus, 
we can find curves $C_1', \cdots, C_m'$ on $Y$ 
such that 
for every ample $\R$-divisor $H$, 
we obtain 
$$(K_Y+\Delta_Y+f^*(A+a_HH))\cdot C'_i=0$$
for some $C_i'$. 
Let $C_i$ be the curve such that $C_i' \subset f^{-1}(C_i)$. 
Then, by Lemma~\ref{intersection-bc}, 
for every ample $\R$-divisor $H$, 
we can find $C_i$ such that 
$$(K_X+\Delta+A+a_H H)\cdot C_i=0.$$
By Lemma~\ref{lemma-cone}, we obtain the assertion. 
\end{proof}

\appendix
\def\thesection{A}
\section{MRCC fibrations and base changes}

The purpose of this section is to show Proposition~\ref{MRCC-bc}. 
This result is intrinsically shown in the proof of \cite[Theorem~5.2]{Kollar1}, 
i.e. it depends on \cite[Theorem~4.17]{Kollar1} and by using the property (4.17.2) in \cite[Theorem~4.17]{Kollar1}, 
we can deduce Proposition~\ref{MRCC-bc}. 
However, we give a proof of it for the sake of completeness. 
First, we establish two lemmas. 

\begin{lem}\label{quotient-fibration}
Let $k$ be a field. 
Let 
$$\begin{CD}
Y @>f>> X\\
@VV\pi V\\
T
\end{CD}$$
be $k$-morphisms of $k$-varieties which satisfy the following properties. 
\begin{itemize}
\item{$X, Y,$ and $T$ are normal $k$-varieties. }
\item{$T$ is an affine scheme.}
\item{$\pi$ is a proper surjective morphism such that $\pi_*\MO_Y=\MO_T$ and 
$f$ is a finite surjective morphism. }
\end{itemize}
Then $S:=\Spec\,H^0(X, \MO_X)$ completes the following commutative diagram 
$$\begin{CD}
Y @>f>> X\\
@VV\pi V @VV\rho V\\
T @>g>> S, 
\end{CD}$$
such that $S$ is a normal $k$-variety, 
where $\rho$ is a proper surjective $k$-morphism such that $\rho_*\MO_X=\MO_S$ and 
$g$ is a finite surjective $k$-morphism. 
\end{lem}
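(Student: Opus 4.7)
The plan is to set $R := H^0(X, \MO_X)$ and $A := H^0(Y, \MO_Y)$, verify that $R \hookrightarrow A$ is an injective, \emph{finite} ring extension, and then read off all the required assertions from this. Since $T$ is affine and $\pi_*\MO_Y = \MO_T$, one has $A = H^0(T, \MO_T)$ and $T = \Spec A$ canonically; the map $g : T \to S$ will then correspond to the natural ring homomorphism $R \to A$ induced by the injection $\MO_X \hookrightarrow f_*\MO_Y$ (injective because $f$ is surjective and $X, Y$ are integral).

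The main technical step is to show that every $a \in A$ is integral over $R$, whence $A$ is a finite $R$-module. For any affine open $U \subset X$, finiteness of $f$ yields that $a$ is integral over $\MO_X(U)$; since $X$ is normal, $\MO_X(U)$ is integrally closed in $K(X)$, and the standard symmetric-function argument (the roots of the minimal polynomial of $a$ over $K(X)$ are all integral over $\MO_X(U)$, so its coefficients are too, hence lie in $\MO_X(U)$) shows that the minimal polynomial of $a$ over $K(X)$ has coefficients in $\MO_X(U)$. Intersecting over all affine opens gives that these coefficients lie in $R$. Writing $A = k[a_1, \dots, a_n]$ as a finitely generated $k$-algebra and noting that each $a_i$ is integral over $R$, we obtain $A = R[a_1, \dots, a_n]$ as a finite $R$-module. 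By the Artin--Tate lemma, $R$ is then a finitely generated $k$-algebra; it is an integral domain (being a subring of $K(X)$) and integrally closed in $\Frac(R)$, so $S := \Spec R$ is a normal affine $k$-variety.

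To finish, I would verify the remaining properties. The map $g : T \to S$ is finite (since $A$ is finite over $R$) and surjective (by integrality and lying-over). The canonical morphism $\rho : X \to S$ exists because $R = \MO_X(X) = \MO_S(S)$; both compositions $\rho \circ f$ and $g \circ \pi$ correspond to the same ring map $R \to A$, so the square commutes. Since $S$ is affine and $X$ is of finite type over $k$, $\rho_*\MO_X$ is quasi-coherent on $S$ with global sections $R = \MO_S(S)$, hence equals $\MO_S$; surjectivity of $\rho$ follows from that of $g \circ \pi = \rho \circ f$. For properness, $g \circ \pi$ is proper as a composition of proper $\pi$ and finite $g$, so $\rho \circ f$ is proper; since $f$ is proper and surjective and $\rho$ is separated of finite type, the standard descent of properness along proper surjections implies that $\rho$ itself is proper.

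The hardest part will be the integrality-plus-finite-generation step, where the normality of $X$ is essential: without it, the minimal polynomial of an element of $A$ over $K(X)$ need not have its coefficients in $R$, and the Artin--Tate reduction would break down.
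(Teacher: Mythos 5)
Your proposal is correct, and for the crucial step it takes a genuinely different route from the paper. Both arguments share the same skeleton: identify $A:=H^0(Y,\MO_Y)=\Gamma(T,\MO_T)$, reduce everything to showing that $A$ is a finite module over $R:=H^0(X,\MO_X)$, deduce that $R$ is a finitely generated $k$-algebra (you invoke Artin--Tate, the paper cites Eakin--Nagata), and then do the standard bookkeeping ($R$ integrally closed as an intersection of the integrally closed rings $\MO_{X,x}$, lying over for surjectivity of $g$, $\rho_*\MO_X=\MO_S$ via quasi-coherence on the affine $S$, and descent of properness of $\rho$ along the surjection $f$) --- details the paper largely leaves implicit but which you spell out correctly. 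The difference is in how module-finiteness of $A$ over $R$ is obtained. The paper splits the finite extension $K(Y)/K(X)$ into a separable part, handled by passing to the Galois closure $L$, normalizing $Y$ in $L$ and taking $G$-invariants (where normality of $X$ gives $\Gamma(X_i,\MO_X)=\Gamma(Z_i,\MO_Z)^G$), and a purely inseparable part, handled via $p^e$-th powers ($\Gamma(S,\MO_S)\supset k[\Gamma(T,\MO_T)^{p^e}]$). You instead prove directly that each $a\in A$ is integral over $R$: it is integral over $\MO_X(U)$ for every affine open $U$ because $f$ is finite, so by normality the coefficients of its minimal polynomial over $K(X)$ lie in every $\MO_X(U)$, hence in $R$; combined with $A=k[a_1,\dots,a_n]$ this gives module-finiteness in one stroke. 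Your argument is more uniform and elementary --- no case distinction, no Galois closure or auxiliary normalization, and it works verbatim in any characteristic --- while relying on exactly the same essential input (normality of $X$) as the paper's case analysis. The only places where you assert rather than prove (integral closedness of $R$ in $\Frac(R)$, quasi-coherence of $\rho_*\MO_X$) are immediate from $R=\bigcap_i\MO_X(U_i)$ and from $\rho$ being quasi-compact and separated, so there is no genuine gap.
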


\begin{proof}
Fix an affine open cover $X=\bigcup_{i \in I} X_i$ and set $Y_i:=f^{-1}(X_i)$. 
Clearly, $S$ satisfies the commutative diagram in the lemma. 
Note that 
$$\Gamma(S, \MO_S)=\Gamma(X, \MO_X)=\bigcap_{x\in X}\MO_{X, x}=\bigcap_{i \in I}\Gamma(X_i, \MO_X).$$
$$\Gamma(T, \MO_T)=\Gamma(Y, \MO_Y)=\bigcap_{y\in Y}\MO_{Y, y}=\bigcap_{i \in I}\Gamma(Y_i, \MO_Y).$$
Thus $S$ is an affine integral $k$-scheme. 
We show that $S$ satisfies the required properties in the lemma. 
For this, it suffices to show that $g$ is a finite morphism by the Eakin--Nagata theorem. 
Taking the separable closure of $K(Y)/K(X)$, 
we may assume that either $g$ is separable or purely inseparable. 

Suppose that $K(Y)/K(X)$ is a separable. 
Let $L\supset K(Y) \supset K(X)$ be the Galois closure and 
set $G:={\rm Gal}(L/K(X))$. 
Let $g:Z \to Y$ be the normalization of $Y$ in $L$ and set $Z_i:=g^{-1}(Y_i)$. 
Since the composite morphism $Z \to Y \to T$ is proper, 
the ring $\Gamma(Z, \MO_Z)$ is a finitely generated $k$-algebra. 
We obtain 
$$\Gamma(S, \MO_S)= \bigcap_{i \in I}\Gamma(X_i, \MO_X)=\bigcap_{i \in I} (\Gamma(Z_i, \MO_Z)^G)
=\left(\bigcap_{i \in I} \Gamma(Z_i, \MO_Z)\right)^G=\Gamma(Z, \MO_Z)^G.$$
Therefore, $\Gamma(S, \MO_S)$ is a finitely generated $k$-algebra and 
$\Gamma(Z, \MO_Z)$ is a finitely generated $\Gamma(S, \MO_S)$-module, 
hence so is $\Gamma(Y, \MO_Y)=\Gamma(T, \MO_T)$. 
Thus, $g$ is finite. 

Therefore, we may assume that $K(Y)/K(X)$ is purely inseparable. 
We can find $e \in \Z_{>0}$ such that $K(X) \supset K(Y)^{p^e}$, 
in particular, $\Gamma(X_i, \MO_X) \supset \Gamma(Y_i, \MO_Y)^{p^e}$. 
We have 
$$\Gamma(S, \MO_S)= \bigcap_{i \in I}\Gamma(X_i, \MO_X) 
\supset k\left[\bigcap_{i \in I}\left(\Gamma(Y_i, \MO_Y)^{p^e}\right)\right]$$
$$=k\left[\left(\bigcap_{i \in I}\Gamma(Y_i, \MO_Y)\right)^{p^e}\right]=k\left[\Gamma(T, \MO_T)^{p^e}\right],$$
where $k[A]$ means the minimum $k$-algebra containing $A$. 
Here $\Gamma(T, \MO_T)$ is a finitely generated $k\left[\Gamma(T, \MO_T)^{p^e}\right]$-module 
because it is an integral extension. 
Therefore, $\Gamma(T, \MO_T)$ is a finitely generated $\Gamma(S, \MO_S)$-module. 
We are done. 
\end{proof}

\begin{rem}
Lemma~\ref{quotient-fibration} fails when $T$ is not affine. 
Actually, there is a finite surjective morphism
$$Y:=\mathbb P^1 \times \mathbb P^1 \to \mathbb P^2=:X.$$
On the other hand, $Y$ has a proper morphism to a curve but $X$ does not. 
\end{rem}

\begin{lem}\label{rational-map}
Let $k$ be a field. 
Let $Y$ be a proper normal $k$-variety. 
Let $Y^0\subset Y$ be a non-empty open subset and 
let 
$$\pi:Y^0 \to Z=\Spec\,R$$ 
be a proper surjective $k$-morphism to an affine $k$-variety $Z$ 
such that $\pi_*\MO_{Y^0}=\MO_Z$. 
We fix an embedding $K(Z)\subset K(Y)$ induced by $\pi$. 
Then 
$$Y^0=\{y\in Y\,|\, R\subset \MO_{Y, y}\}.$$
\end{lem}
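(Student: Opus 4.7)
The plan is to establish the set equality by two inclusions, writing $U:=\{y\in Y\mid R\subset \MO_{Y,y}\}$, where $R$ and $\MO_{Y,y}$ are both viewed as subrings of $K(Y)$ via the fixed embedding $K(Z)\subset K(Y)$.

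The inclusion $Y^0\subset U$ is essentially formal. For $y\in Y^0$, the hypothesis $\pi_*\MO_{Y^0}=\MO_Z$ identifies $R=\Gamma(Z,\MO_Z)$ with $\Gamma(Y^0,\MO_{Y^0})$, and this ring maps into $\MO_{Y^0,y}=\MO_{Y,y}$ by restriction. Composing with $\MO_{Y,y}\hookrightarrow K(Y)$ recovers the fixed embedding of $R$ into $K(Y)$, so $R\subset \MO_{Y,y}$ as subrings of $K(Y)$.

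The substantive direction $U\subset Y^0$ is where I would apply the valuative criterion of properness to $\pi$. Given $y\in U$, first choose a valuation ring $\mathcal{O}\subset K(Y)$ with $\Frac \mathcal{O}=K(Y)$ dominating the local ring $\MO_{Y,y}$; this is a standard existence result. Then $R\subset \MO_{Y,y}\subset \mathcal{O}$, so the inclusion $R\hookrightarrow K(Y)$ lifts to a ring homomorphism $R\to \mathcal{O}$ and yields a morphism $\Spec \mathcal{O}\to Z$. On the other hand, the generic point $\Spec K(Y)$ of $\Spec \mathcal{O}$ maps to the generic point of $Y$, which lies in the non-empty open subset $Y^0$ of the irreducible scheme $Y$. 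Composition with $\pi$ gives a map $\Spec K(Y)\to Z$ that agrees with the restriction of $\Spec \mathcal{O}\to Z$ to the generic point (both are induced by the fixed embedding $K(Z)\subset K(Y)$). Hence I obtain a commutative diagram
$$\begin{CD}
\Spec K(Y) @>>> Y^0\\
@VVV @VV\pi V\\
\Spec \mathcal{O} @>>> Z
\end{CD}$$
and by the valuative criterion applied to the proper morphism $\pi:Y^0\to Z$ there is a (unique) lift $\Spec \mathcal{O}\to Y^0$.

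The last step compares two morphisms $\Spec \mathcal{O}\to Y$: the composition $\Spec \mathcal{O}\to Y^0\hookrightarrow Y$ coming from the lift above, and the morphism induced directly by the domination $\MO_{Y,y}\hookrightarrow \mathcal{O}$. Both extend the canonical generic point $\Spec K(Y)\to Y$, so by separatedness of $Y$ they coincide; hence the domination morphism also factors through $Y^0$. Since by the definition of domination the closed point of $\Spec \mathcal{O}$ is sent to $y$, we conclude $y\in Y^0$. The main bookkeeping obstacle is checking that every ring map and morphism constructed along the way is compatible with the single fixed embedding $K(Z)\subset K(Y)$ so that the valuative square genuinely commutes; once this compatibility is tracked carefully, the argument is routine.
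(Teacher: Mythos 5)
Your proof is correct, but it takes a genuinely different route from the paper. The paper compactifies the base, $Z\subset \overline Z$, resolves the indeterminacy of $Y\dashrightarrow \overline Z$ by a normalized blowup $\pi':Y'\to\overline Z$, and then, for a point $y$ with $R\subset\MO_{Y,y}$, shows that the relevant fiber satisfies $\pi^{-1}(z)=\pi'^{-1}(z)$: here the hypothesis $\pi_*\MO_{Y^0}=\MO_Z$ enters essentially, since it gives that $K(Z)$ is algebraically closed in $K(Y)$, hence $\pi'_*\MO_{Y'}=\MO_{\overline Z}$ and the fibers of $\pi'$ are connected, and then an open--closed (properness) argument identifies the two fibers. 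You instead apply the valuative criterion of properness directly to $\pi:Y^0\to Z$, using a valuation ring of $K(Y)$ dominating $\MO_{Y,y}$, and then identify the resulting lift with the domination morphism $\Spec\mathcal O\to Y$ by the standard equalizer/separatedness argument (two morphisms from the integral scheme $\Spec\mathcal O$ to the separated scheme $Y$ that agree at the generic point, with the same map on function fields, coincide, since the equalizer is a closed subscheme containing the generic point). This is more elementary: it avoids the compactification, the resolution of indeterminacy, and the Stein-factorization/connectedness input, and in fact your argument for the inclusion $\{y\mid R\subset\MO_{Y,y}\}\subset Y^0$ uses only properness of $\pi$ and the compatibility of the fixed embedding, so it proves the lemma under a weaker hypothesis (the condition $\pi_*\MO_{Y^0}=\MO_Z$ is only used, trivially, for the easy inclusion, and even there only the pullback map $R\to\Gamma(Y^0,\MO_{Y^0})$ is needed). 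One small technical remark: since you take an arbitrary valuation ring rather than a DVR, you should invoke the general (non-noetherian) form of the existence part of the valuative criterion, which is valid because $\pi$ is universally closed, separated and quasi-compact; alternatively, choose a discrete valuation ring with fraction field $K(Y)$ dominating $\MO_{Y,y}$, which exists because $\MO_{Y,y}$ is a noetherian local domain, and use the familiar noetherian form. With that point made precise, your argument is complete.
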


\begin{proof}
Set $Y^1:=\{y\in Y\,|\, R\subset \MO_{Y, y}\}.$ 
We show $Y^0 \subset Y^1$. 
Take $y\in Y^0$. 
Then, we obtain 
$$R=\Gamma(Z, \MO_Z)=\Gamma(Z, \pi_*\MO_{Y^0})
=\Gamma(Y^0, \MO_{Y^0})\subset \MO_{Y, y}.$$
This implies $y\in Y^1$. 

We prove the inverse inclusion $Y^0 \supset Y^1$. 
We take 
$y\in Y^1$. 
Then, we obtain the following commutative diagram of inclusions 
$$\begin{CD}
K(Z) @>>>K(Y)\\
@AAA @AAA\\
R @>>> \MO_{Y, y}.
\end{CD}$$
Take a normal projective compactification $Z \subset \overline Z$ and 
the normalization of 
a blowup of the indeterminacy of $Y \dasharrow \overline Z$, denoted by $\pi':Y' \to \overline Z$. 
We obtain 
$$\begin{CD}
Y^0 @>{\rm open}>{\rm immersion}>Y'\\
@VV\pi V @VV\pi'V\\
Z @>{\rm open}>{\rm immersion}> \overline Z.
\end{CD}$$
Fix a point $y'$ over $y$. 
Then, we obtain $R \subset \MO_{Y', y'}$. 
Therefore, $\pi'(y')$ is a point $z$ in $Z=\Spec\,R$. 
It suffices to show that $\pi^{-1}(z)=\pi'^{-1}(z)$. 
Note that $\pi^{-1}(z)$ is proper over $k$ and that 
$\pi^{-1}(z)=\pi'^{-1}(z) \cap Y^0$ is an open subset of $\pi'^{-1}(z)$. 
Since $\pi_*\MO_{Y^0}=\MO_Z$, the field $K(Z)$ is algebraically closed in $K(Y)$. 
Therefore, $\pi'_*\MO_{Y'}=\MO_{\overline Z}$. 
In particular, $\pi'^{-1}(z)$ is connected, which implies $\pi^{-1}(z)=\pi'^{-1}(z)$. 
Indeed, otherwise $\pi^{-1}(z)=\pi'^{-1}(z) \cap Y^0$ is not proper over $k$. 
\end{proof}

We prove the main result of this section. 



\begin{prop}\label{MRCC-bc}
Let $k \subset k'$ be a field extension. 
Let $X$ be a proper geometrically normal and geometrically connected variety over $k$. 
Let $X \supset X^0 \to V$ and $X\otimes_k k' \supset Y^0 \to W$ 
be MRCC fibrations of $X$ and $X\otimes_k k'$, respectively. 
Then $\dim V=\dim W$. 
\end{prop}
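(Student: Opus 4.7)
The plan is to show $\dim V=\dim W$ via the two opposite inequalities, each obtained by transferring an RCC fibration from one of $X$ and $X\otimes_k k'$ to the other and invoking the MRCC property on the receiving side. A preliminary reduction I would make is to the case where $k'/k$ is finitely generated, since the data $(W,Y^0,Y^0\to W)$ is defined over a finitely generated intermediate extension; once there, I would factor $k\subset k'$ as a purely transcendental extension followed by a finite algebraic extension and treat each case. The assumption that $X$ is geometrically normal and geometrically connected guarantees (via Lemma~\ref{geom-irreducible}) that $X\otimes_k k'$ is an integral normal $k'$-variety, so the MRCC fibration $Y^0\to W$ is defined on a genuine $k'$-variety.

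For the direction coming from base change, I would base-change the MRCC fibration $X\supset X^0\to V$ via $X^0\otimes_k k' \to V\otimes_k k'$. Since $X^0\otimes_k k'$ is irreducible, its image lies generically in a single irreducible component $V'$ of $V\otimes_k k'$. After shrinking $V$ to an affine open, applying flat base change to preserve $\pi_*\MO=\MO$, and passing to the normalization (or a smooth open) of $V'$, one obtains a proper surjective morphism $U\to V^{\ast}$ with $\MO$-pushforward whose fibers are base changes of RCC fibers of $X^0\to V$ and hence remain RCC. This yields an RCC fibration on $X\otimes_k k'$ with target of dimension $\dim V$, and the MRCC property of $Y^0\to W$ then compares $\dim W$ with $\dim V$.

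For the reverse direction I must descend an RCC fibration from $X\otimes_k k'$ to $X$. Assume first $k'/k$ is finite, and let $f\colon X\otimes_k k'\to X$ be the projection, a finite surjective $k$-morphism. After shrinking $W$ to an affine open $\Spec R$ and shrinking appropriately, I may arrange that $f(Y^0)$ is open in $X$ (by removing the closed set $f(X\otimes_k k'\setminus Y^0)$ and passing to the preimage). Then Lemma~\ref{quotient-fibration} applied to the finite surjection $f|_{Y^0}\colon Y^0\to f(Y^0)$ together with the proper morphism $Y^0\to\Spec R$ produces an affine $k$-variety $S$ and a proper surjection $f(Y^0)\to S$ with $\MO_S$-pushforward, together with a finite surjection $\Spec R\to S$. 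Finiteness of the latter yields $\dim S=\dim R=\dim W$, and the fibers of $f(Y^0)\to S$ are finite images of RCC fibers of $Y^0\to\Spec R$, hence still RCC. This is an RCC fibration on $X$, and the MRCC property of $X^0\to V$ compares $\dim V$ with $\dim W$.

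For $k'/k$ purely transcendental, say $k'=k(T)$ with $T=\mathbb{A}^m_k$, I would spread the MRCC fibration $Y^0\to W$ out to a family $\mathcal Y^0\subset X\times_k T^0$ over a dense open $T^0\subset T$ whose generic fiber recovers $Y^0\to W$, then restrict to a closed point $t\in T^0$ (passing first to a finite extension containing the residue field $k(t)$, handled by the previous case) to obtain an RCC fibration on $X$ with target of dimension $\dim W$. The main obstacle is this descent step: verifying that shrinking can be arranged so that $f(Y^0)$ is open and that Lemma~\ref{quotient-fibration} applies, checking that finite images of RCC schemes are indeed RCC (which follows since a surjection from an RCC proper scheme yields an RCC proper scheme), and controlling the bookkeeping under spreading-out and specialization in the transcendental case.
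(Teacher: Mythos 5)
Your base-change direction and the reduction to a finite (or finitely generated, then finite) extension are fine and essentially follow the paper. The genuine gap is in the descent step for a finite extension $k'/k$. You propose to make $f(Y^0)$ open by replacing $Y^0$ with $Y^1:=f^{-1}(U)$, where $U:=X\setminus f\bigl((X\otimes_k k')\setminus Y^0\bigr)$. This does make $Y^1$ saturated for $f$, but it destroys the other hypothesis of Lemma~\ref{quotient-fibration}: the restriction of $\pi:Y^0\to \Spec R$ to the strictly smaller open $Y^1$ is no longer proper, unless you can shrink the base $\Spec R$ so as to avoid $\pi(Y^0\setminus Y^1)$. There is no a priori reason this is possible: $Y^0\setminus Y^1$ is a closed subset of $Y^0$ of codimension $\geq 1$, and such a subset can perfectly well dominate $W$ (e.g.\ a multisection when the fibers are positive-dimensional), in which case no open shrinking of $W$ restores properness. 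So the two saturation requirements (for $f$ and for $\pi$) conflict, and "shrinking appropriately" hides exactly the hard point.

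Resolving this conflict is the actual content of the paper's appendix argument, and it is where the maximality of the fibration $Y^0\to W$ — which your descent step never uses — enters. Reducing to $k'/k$ Galois, the paper shows that each $\sigma_i\in{\rm Gal}(K(Y)/K(X))$ carries the MRCC fibration to another MRCC fibration, so by uniqueness of MRCC fibrations up to birational equivalence $\sigma_i(K(Z))=K(Z)$; after a shrink of $Z$ only (hence a $\pi$-saturated shrink of $Y^0$, preserving properness), the $\sigma_i$ act on $Z$ and fix $R_Z$. Then Lemma~\ref{rational-map} gives the intrinsic description $Y^0=\{y\in Y\mid R_Z\subset\MO_{Y,y}\}$, which is manifestly Galois-invariant, so $Y^0$ is already $f$-saturated and $\beta^{-1}(\beta(Y^0))=Y^0$; only then can Lemma~\ref{quotient-fibration} be applied. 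Your proposal contains neither the Galois-invariance argument nor any substitute for it (descending an arbitrary RCC fibration, rather than the maximal one, along a finite morphism is not possible by such elementary means), so the reverse inequality $\dim V\leq\dim W$ is not established as written.
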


\begin{proof}
For a proper variety $V$, 
set $r(V):=\dim V - \dim W$ where 
$V \supset V^0 \to W$ is an MRCC fibration. 
Note that $r(V)$ is well-defined 
because $W$ and $W_1$ are birational for another MRCC fibration $V \supset V^0_1 \to W_1$. 
It is enough to show $r(X)=r(X\otimes_k k')$. 

We show $r(X)\leq r(X\otimes_k k')$. 
Take an RCC fibration $X\supset X^0 \to Z$ such that 
the dimension of a general fiber is $r(X)$. 
Then, taking the base change to $k'$, 
we obtain an RCC fibration $X\otimes_k {k'} \supset X^0\otimes_k{k'} \to Z\otimes_k {k'}$. 
Therefore $r(X)\leq r(X\otimes_k k')$. 

\medskip

It suffices to prove $r(X)\geq r(X\times_k k')$. 
We obtain an RCC fibration $X\otimes_k k'\supset Y^0 \to Z$ such that 
$\dim X-\dim Z=r(X\otimes_k k')$. 

We prove that we may assume that $[k':k]<\infty$. 
We obtain a family of RCC fibrations $X\otimes_k R\supset Y^0_R \to Z_R$ over some intermediate ring 
$k\subset R \subset k'$ which is of finite type over $k$. 
Taking a general closed point of $\Spec\,R$, we may assume $[k':k]<\infty$. 

If $k'/k$ is purely inseparable, 
then the assertion can be easily proved. 
Thus, we may assume that $k'/k$ is a finite separable extension. 
Moreover, by taking the Galois closure of $k'/k$, 
we may assume that $k'/k$ is a finite Galois extension. 

\medskip 
Assume that $k'/k$ is a finite Galois extension. 
We have an MRCC fibration $Y:=X\otimes_k k'\supset Y^0 \to Z$ 
such that $\dim Y -\dim Z=r(X\otimes_k k')$. 
Since $k'/k$ is a Galois extension, so is $K(Y)=K(X\otimes_k k')/K(X)$. 
Set 
$$G:={\rm Gal}(K(Y)/K(X))=\{\sigma_1, \cdots, \sigma_N\}.$$
By shrinking $Z$, we may assume $Z=\Spec\,R_Z$. 
We fix an embedding $K(Z) \subset K(Y)$. 
Let $\sigma_i^*:Y \to Y$ be the induced automorphism.  

We show that $K(Z)=\sigma_i(K(Z))$ for every integer $1\leq i\leq N$. 
Fix $1\leq i\leq N$. 
We obtain another MRCC fibration 
$$Y=\sigma^*_i(Y)\supset \sigma^*_i(Y^0) \to \Spec\,\sigma_i(R_Z).$$
Then, we see that $\Spec\,R_Z$ and $\Spec\,\sigma_i(R_Z)$ are birational. 
This implies $K(Z)=\sigma_i(K(Z)).$ 

Thus, each $\sigma_i$ induces an birational automorphism $\sigma_i:Z \dashrightarrow Z$. 
Since $G=\{\sigma_1, \cdots, \sigma_N\}$ is a finite group, 
we can find a non-empty open subset $Z' \subset Z$ 
such that the induced rational map $Z' \dashrightarrow Z'$ is an isomorphism, i.e., automorphism. 
By replacing $Z$ with $Z'$, we may assume that $\sigma_i:Z \to Z$ is an automorphism. 
In particular $\sigma_i(R_Z)=R_Z$. 

We show $Y^0=\sigma_i^*(Y^0)$ for every $i$. 
By Lemma~\ref{rational-map}, we obtain 
$$Y^0=\{y\in Y\,|\, R_Z \subset \MO_{Y, y}\}.$$ 
Therefore, 
\begin{eqnarray*}
\sigma_i^*(Y^0)
&=&\sigma_i^*(\{y\in Y\,|\, R_Z \subset \MO_{Y, y}\})\\
&=&\{y':=\sigma_i^*(y)\in Y\,|\, R_Z \subset \MO_{Y, y}\}\\
&=&\{y'\in Y\,|\, R_Z \subset \MO_{Y, (\sigma_i^*)^{-1}(y')}\}\\
&=&\{y'\in Y\,|\, R_Z \subset \sigma_i^{-1}(\MO_{Y, y'})\}\\
&=&\{y'\in Y\,|\, \sigma_i(R_Z) \subset \MO_{Y, y'}\}\\
&=&\{y'\in Y\,|\, R_Z \subset \MO_{Y, y'}\}\\
&=&Y^0.
\end{eqnarray*}

Set $X^0:=\beta(Y^0)$, where $\beta:Y \to X$. 
Then, by $Y^0=\sigma_i(Y^0)$, we obtain 
$$\beta^{-1}(X^0)=\bigcup_{1\leq i\leq N} \sigma_i(Y^0)=\bigcup_{1\leq i\leq N} Y^0=Y^0.$$
By Lemma~\ref{quotient-fibration}, we obtain the following commutative diagram
$$\begin{CD}
Y^0@>>> X^0\\
@VVV @VVV\\
T @>>> S,
\end{CD}$$ 
where $S$ is an affine normal $k$-variety, 
$T\to S$ is a finite surjective morphism, 
and $X^0 \to S$ is a proper surjective morphism. 
Since fibers of $Y^0 \to T$ are RCC, so are the fibers of $X^0 \to S$. 
Therefore $X \supset X^0 \to S$ is an RCC fibration, which implies 
$$r(X)\geq \dim X -\dim S=\dim Y-\dim T=r(Y)=r(X\otimes_k k').$$
\end{proof}

\end{document}